\newtheorem{thm}{Theorem}[section]
\newtheorem{lemma}[thm]{Lemma}
\newtheorem{cor}[thm]{Corollary}
\theoremstyle{definition}
\newtheorem{exa}[thm]{Example}
\newtheorem{rem}[thm]{Remark}
\newtheorem{rems}[thm]{Remarks}
\numberwithin{equation}{section}
\newcommand{\vanish}[1]{\relax}       
\newcommand{\emdf}{\bf}               
\def\qedsymbol{\hbox to 1ex{\llap{\rule{0.25pt}{1ex}}\rlap{\rule{1ex}{0.25pt}}\lower0.25pt\rlap{\raise1ex\rlap{\rule{1ex}{0.25pt}}}\hskip1ex\llap{\rule{0.25pt}{1ex}}}}
\def\rlqed{\rlap{\rule{\hsize}{0pt}\kern-1ex\kern-1em\qed}} 
\newcounter{aufzi}
\newenvironment{aufzi}{\begin{list}{ {\upshape\alph{aufzi})}}{
        \usecounter{aufzi}
        \topsep1ex
        \parsep0cm
        \itemsep0.8ex
        \leftmargin1cm
        \labelwidth0.5cm
        \labelsep0.3cm
}}
{\end{list}}
\newcounter{aufzii}
\newenvironment{aufzii}{\begin{list}{\hfill {\upshape
(\roman{aufzii})}}{
        \usecounter{aufzii}
        \topsep1ex
        \parsep0cm
        \itemsep1ex
        \leftmargin1cm
        \labelwidth0.5cm
        \labelsep0.3cm
}}
{\end{list}}
\newcounter{aufziii}
\newenvironment{aufziii}{\begin{list}{ {\upshape\arabic{aufziii})}}{
        \usecounter{aufziii}
        \topsep1ex
        \parsep0cm
        \itemsep0.8ex
        \leftmargin1cm
        \labelwidth0.5cm
        \labelsep0.3cm
}}
{\end{list}}
\newcommand{\calA}{\mathcal{A}}
\def\bfK{\mathbf{K}}
\def\bfU{\mathbf{U}}
\def\bfX{\mathbf{X}}		
\def\bfY{\mathbf{Y}}
\def\bfZ{\mathbf{Z}}
\def\ue{\mathrm{e}}
\def\ui{\mathrm{i}}
\newcommand{\usigma}{\sigma}  
\newcommand{\upi}{\pi}      
\renewcommand{\ue}{\mathrm{e}}    
\renewcommand{\ui}{\mathrm{i}}    
\def\id{\mathop{\mathrm{id}}\nolimits}   
\newcommand{\car}{\mathbf{1}}           
\newcommand{\T}{\mathbb{T}}
\newcommand{\R}{\mathbb{R}}     
\newcommand{\N}{\mathbb{N}}
\newcommand{\Z}{\mathbb{Z}}
\newcommand{\C}{\mathbb{C}}
\newcommand{\torus}{\T}          
\newcommand{\st}{\,:\,}     
\newcommand{\without}{\setminus}
\newcommand{\Bigcap}[2][\relax]{%
 \ifx#1\relax \bigcap_{#2}
 \else \bigcap^{#1}_{#2}
 \fi}
\newcommand{\Bigcup}[2][\relax]{%
 \ifx#1\relax \bigcup_{#2}
 \else \bigcup^{#1}_{#2}
 \fi}
\newcommand{\after}{\circ}
\newcommand{\pfeil}{\longrightarrow}
\DeclareMathOperator{\ran}{ran}
\DeclareMathOperator{\fix}{fix}
\newcommand{\restrict}[1]{|_{#1}}
\def\res{\restrict} 
\newcommand{\conj}[1]{\overline{#1}}   
\newcommand{\abs}[1]{\left\vert#1\right\vert}   
\newcommand{\cls}[1]{\overline{#1}}     
\newcommand{\tensor}{\otimes}
\DeclareMathOperator{\End}{End}
\DeclareMathOperator{\Aut}{Aut}
\def\haar{\mathrm{m}}   
\renewcommand{\mod}[1]{\,\,\,\,(\mathrm{mod}\,#1)\,}     
\def\fact#1#2{#1/#2}
\def\tfact#1#2{#1/#2}
\def\fact#1#2{{\raise0.2em\hbox{$#1$}\kern-0.2em/\kern-0.1em\lower0.2em\hbox{$#2$}}}
\def\tfact#1#2{{\raise0.1em\hbox{\small$#1$}\kern-0.1em/\kern-0.1em\lower0.1em\hbox{\small$#2$}}}
\newcommand{\norm}[2][\relax]{
   \ifx#1\relax \ensuremath{\left\Vert#2\right\Vert}
   \else \ensuremath{\left\Vert#2\right\Vert_{#1}}
   \fi}
\newcommand{\Bnorm}[2][\relax]{
   \ifx#1\relax \ensuremath{\Bigl\Vert#2\Bigr\Vert}
   \else \ensuremath{\Bigl\Vert#2\Bigr\Vert_{#1}}
   \fi}
\def\Id{\mathrm{I}}           
\newcommand{\Pspec}{\usigma_{\mathrm{p}}}
\DeclareMathOperator{\spann}{span}     
\def\cspan{\overline{\spann}}           
\def\cl{\mathrm{cl}}                   
\newcommand{\tdprod}[2]{\ensuremath{%
  \setbox0=\hbox{\ensuremath{\langle#1,#2 \rangle}}
  \dimen@\ht0
  \advance\dimen@ by \dp0 (#1\rule[-\dp0]{0pt}{\dimen@}\,|#2\hspace{1pt})}}
\newcommand{\dprod}[2]{\ensuremath{%
  \setbox0=\hbox{\ensuremath{\left\langle#1,#2\right\rangle}}
  \dimen@\ht0
  \advance\dimen@ by \dp0 \left\langle\left.#1\rule[-\dp0]{0pt}{\dimen@}\,\right|#2\hspace{1pt}\right\rangle}}
\newcommand{\bdprod}[2]{\ensuremath{%
  \setbox0=\hbox{\ensuremath{\bigl\langle#1,#2\bigr\rangle}}
  \dimen@\ht0
  \advance\dimen@ by \dp0 \bigl\langle#1\bigl|\rule[-\dp0]{0pt}{\dimen@}\bigr.#2\hspace{1pt}\bigr\rangle}}
\newcommand{\Bdprod}[2]{\ensuremath{%
  \setbox0=\hbox{\ensuremath{\Bigl\langle#1,#2\Bigr\rangle}}
  \dimen@\ht0
  \advance\dimen@ by \dp0 \Bigl\langle#1\Bigl|\rule[-\dp0]{0pt}{\dimen@}\Bigr.#2\hspace{1pt}\Bigr\rangle}}
\newcommand{\tsprod}[2]{\ensuremath{%
  \setbox0=\hbox{\ensuremath{(#1,#2)}}
  \dimen@\ht0
  \advance\dimen@ by \dp0 (#1\rule[-\dp0]{0pt}{\dimen@}\,|#2\hspace{1pt})}}
\newcommand{\sprod}[2]{\ensuremath{%
  \setbox0=\hbox{\ensuremath{\left(#1,#2\right)}}
  \dimen@\ht0
  \advance\dimen@ by \dp0 \left(\left.#1\rule[-\dp0]{0pt}{\dimen@}\,\right|#2\hspace{1pt}\right)}}
\newcommand{\bsprod}[2]{\ensuremath{%
  \setbox0=\hbox{\ensuremath{\bigl(#1,#2\bigr)}}
  \dimen@\ht0
  \advance\dimen@ by \dp0 \bigl(#1\bigl|\rule[-\dp0]{0pt}{\dimen@}\bigr.#2\hspace{1pt}\bigr)}}
\newcommand{\Bsprod}[2]{\ensuremath{%
  \setbox0=\hbox{\ensuremath{\Bigl(#1,#2\Bigr)}}
  \dimen@\ht0
  \advance\dimen@ by \dp0 \Bigl(#1\Bigl|\rule[-\dp0]{0pt}{\dimen@}\Bigr.#2\hspace{1pt}\Bigr)}}
\newcommand{\Ce}{\mathrm{C}}
\newcommand{\Ell}[2][\relax]{
   \ifx#1\relax \mathrm{L}^{\mathrm{#2}}
   \else \mathrm{L}^{\mathrm{#2}}_{\mathrm{#1}}
   \fi}
\renewcommand{\Ell}[2][\relax]{
   \ifx#1\relax \mathrm{L}^{\!#2}
   \else \mathrm{L}^{\!#2}_{\mathrm{#1}}
   \fi}
\newcommand{\Wee}[2][\relax]{
   \ifx#1\relax \mathrm{W}^{\mathrm{#2}}
   \else \mathrm{W}^{\mathrm{#2}}_{\mathrm{#1}}
   \fi}
\newcommand{\Har}[2][\relax]{
   \ifx#1\relax \mathsf{H}^{\mathsf{#2}}
   \else   \mathsf{H}^{\mathsf{#2}}_{\mathrm{#1}}
   \fi}
\newcommand\Cex[2]{\mathbb{E}(#1\, |\, #2)}   
\def\prX{\mathrm X}    
\def\prY{\mathrm Y}
\def\prZ{\mathrm Z}
\def\MDSX{(\prX;\varphi)}   
\def\rlqed{\rlap{\rule{\hsize}{0pt}\kern-1ex\kern-1em\qed}}
\def\maketag@@@@@#1{\llap{\hbox to\hsize{\m@th\normalfont#1}}%
\gdef\tagform@##1{\maketag@@@{(\ignorespaces##1\unskip\@@italiccorr)}}}
\def\eqtext#1{\gdef\tagform@##1{\maketag@@@@@{\ignorespaces##1\unskip\@@italiccorr\hfill}}\tag{#1}}%
\def\reqtext#1{\gdef\tagform@##1{\maketag@@@@@{\hfill\ignorespaces##1\unskip\@@italiccorr}}\tag{#1}}%
\def\leqtext#1{\gdef\tagform@##1{\maketag@@@@@{\ignorespaces##1\unskip\@@italiccorr}}\tag{#1}}%
\newcommand{\signature}{signature}
\newcommand{\vphi}{\varphi}
\newcommand{\prfskip}{\smallskip\noindent}
\newcommand{\sig}{\mathrm{sig}}
\newcommand{\ord}{\mathrm{ord}}
\date{\today}
\begin{document}

\title{On systems with  quasi-discrete spectrum}

\author{Markus Haase}
\address{Mathematisches Seminar, Christian-Albrechts-Universit\"at zu Kiel, 
Ludewig-Meyn-Str. 4, 24118 Kiel, Germany}
\email{haase@math.uni-kiel.de}

\author[Nikita Moriakov]{Nikita Moriakov}
\address{Delft Institute of Applied Mathematics, Delft University of Technology,
P.O. Box 5031, 2600 GA Delft, The Netherlands}

\email{n.v.moriakov@tudelft.nl}

\subjclass{Primary 37A05; Secondary 47A35}
\renewcommand{\subjclassname}{\textup{2000} Mathematics Subject
    Classification}
\keywords{quasi-discrete spectrum, isomorphism theorem, Markov quasi-factor}

\date{1 June 2017}

\begin{abstract}
In this paper we re-examine the theory of systems with quasi-discrete
spectrum initiated in the 1960's by Abramov, Hahn, and Parry. 
In the first part, we give a simpler proof of the Hahn--Parry 
theorem stating that each minimal topological system with quasi-discrete
spectrum is isomorphic to
a certain  affine automorphism system on some compact Abelian group.  
Next, we show that a suitable application of Gelfand's theorem
renders Abramov's theorem --- the analogue of the Hahn-Parry theorem
for measure-preserving systems --- a straightforward corollary of
the Hahn-Parry result.

In the second part, independent of the first,  we present a 
 shortened proof of the fact that
each factor of a totally ergodic system with quasi-discrete spectrum 
(a ``QDS-system'') has
again quasi-discrete spectrum and that such systems have zero entropy. 
Moreover, we obtain a complete algebraic classification of the factors
of a QDS-system.

In the third part, we apply the results of the second to the (still open)
question whether a Markov quasi-factor of a QDS-system is already
 a factor of it. We show that this is true when the system
satisfies some algebraic constraint on the group of quasi-eigenvalues,
which is satisfied,  e.g., in the case of the skew shift.%
\end{abstract}

\subjclass[2010]{Primary 37A05; Secondary 47A35}

\keywords{quasi-discrete spectrum, isomorphism theorem, Markov
  quasi-factor}

\maketitle

\section{Introduction}\label{s.intro}

A classical problem in ergodic theory is to determine whether given
(measure-preserving) dynamical systems are isomorphic or not, 
to determine complete sets of isomorphism invariants at least for 
some classes of dynamical systems and, possibly, 
to find canonical representatives for the corresponding isomorphism classes.
 
The oldest result of this type is the Halmos--von Neumann theorem
from \cite{HalmosVonNeumann1942}, 
which says that the systems with \emph{discrete spectrum} 
are isomorphic to compact Abelian group rotations, and the isomorphism 
class is completely determined by the point spectrum of the associated
Koopman operator. 

In \cite{Abramov1962}, the notion of a system with discrete spectrum 
was generalized by Abramov 
to (totally ergodic) systems with  \emph{quasi-discrete spectrum}.
In  analogy to the results of Halmos--von Neumann,
Abramov could show that also this class has
a complete isomorphism invariant (the ``signature'', in our terminology) 
and canonical representatives.
Parallel to the original arguments of Halmos and von Neumann, Abramov
first established a ``theorem of uniqueness'' telling that 
two systems with quasi-discrete spectrum with same signature
are isomorphic, and then, in a ``theorem of existence'', 
showed that to each signature there is
a special system --- an affine automorphism on a compact Abelian group ---
with quasi-discrete spectrum and the given signature. As a combination
of these two results, he then obtained the main ``representation theorem''
that each totally ergodic system 
with quasi-discrete spectrum is isomorphic to 
an affine automorphism on a compact Abelian group.

A couple of years later, Hahn and Parry \cite{HahnParry1965} developed the
corresponding theory for topological dynamical systems. Their approach
was completely analogous: first to prove an isomorphism theorem, then 
a realization result; finally, as a corollary, the  representation
theorem. The results of Abramov and Hahn--Parry were incorporated by
Brown into   Chapter III of his classic book \cite{Brown1976}.
Although Brown's presentation is more systematic,
he  essentially copied Abramov's proof of the isomorphism theorem.

One purpose of this article is to introduce a considerable
simplification in the presentation of these results. 
In particular, we give direct proofs of the representation theorems
to the effect that the isomorphism theorems become corollaries.
We shall show, moreover, that 
the measure-preserving case 
is actually an immediate consequence of the topological case by virtue of
a good choice of a topological model via  Gelfand's theorem. 
(This underlines a general philosophy, already prominently 
demonstrated in the proof of the Halmos--von Neumann theorem in 
\cite[Chapter 17]{EFHN}.) 

Note that, in this approach, the realization results 
(``theorems of existence'') are not
needed any more neither for proving the representation nor the isomorphism
theorem. Nevertheless, the realization theorems are completing the picture,
and we include their proofs for the convenience of the reader.

\medskip

In the second part (Section 5), 
we present a purely operator theoretic proof of a (generalization of
a) result
of Hahn and Parry from \cite{HahnParry1968} which implies
among other things  that a factor of a totally ergodic
system with quasi-discrete spectrum has again quasi-discrete spectrum. 
Using our notion of ``signature'' we also give a complete
algebraic classification of the factors of such a system.
These results are completely 
independent of the representation theorems of Sections 3 and 4.

In the last section we then discuss an application of these results to the
problem of determining Markov quasi-factors of measure-preserving systems with
quasi-discrete spectrum. We show that under certain algebraic
assumptions on the signature of a system each Markov quasi-factor of
the system is necessarily a
factor.

\bigskip

\noindent
{\bf Terminology and Notation.}\
Throughout this article we generically write $\bfK = (K;\vphi)$ for 
topological and $\bfX = \MDSX$ for  measure-preserving dynamical systems.  
This means that in the first case $K$ is a compact Hausdorff space
and $\vphi: K \to K$ is continuous, while in the second case
$\prX = (X, \Sigma_\prX, \mu_\prX)$ is a probability space
and $\vphi: X \to X$ is a measure-preserving measurable map. 
The topological system $\bfK$ is called {\emdf separable} if  
$\Ce(K)$ is separable
as a Banach space, which is equivalent to $K$ being metrizable. 
Analogously, the measure-preserving system $\bfX$ is {\emdf separable} if
$\Ell{1}(\prX)$ is separable as a Banach space. This is equivalent to 
$\Sigma_\prX$ being countably generated (modulo null sets).

The corresponding {\emdf Koopman operators} on $\Ce(K)$
in the topological and on $\Ell{1}(\prX)$ in the 
measure-preserving situation are generically denoted by $T_\vphi$ or,
if the dynamics is understood, simply by $T$.

In general, our terminology and 
notation is the same as in \cite{EFHN}. 
In particular, if $T$ is a bounded operator on a complex Banach space $E$, we
write
\[ \Pspec(T) :=  \{ \lambda \in \C \st \text{$\lambda$ is an
  eigenvalue of $T$}\}
\]
for the {\emdf point spectrum} of $T$. 
Given two  measure-preserving systems $\bfX = (\prX; \vphi)$ and 
$\bfY = (\prY; \psi)$ we call each operator 
$S: \Ell{1}(\prX) \to \Ell{1}(\prY)$ a {\emdf Markov operator}
if it is one-preserving, order-preserving and integral-preserving.
A Markov operator $S$ is a {\emdf Markov embedding} 
if it is a lattice homomorphism,  a {\emdf Markov isomorphism} if it is a
surjective Markov embedding, and
{\emdf intertwining} if $ST_\vphi = T_\psi S$. 

Two systems $\bfX$ and 
$\bfY$ are {\emdf isomorphic} if there exists an intertwining
Markov isomorphism between the respective $\Ell{1}$-spaces. 
(For the connection with the notion of {\em point isomorphism}, see
\cite[Chapter 12]{EFHN}.)

A {\emdf factor} of a measure-preserving system 
$\bfX = (\prX; \vphi)$ is a measure-preserving
system $\bfY = (\prY; \psi)$ together with an intertwining 
Markov embedding $S: \Ell{1}(\prY) \to \Ell{1}(\prX)$.
Two factors 
\[ S_1 : \Ell{1}(\prY_1) \to \Ell{1}(\prX) \quad \text{and}\quad
S_2 : \Ell{1}(\prY_2) \to \Ell{1}(\prX)
\]
are considered {\em the same} if $\ran(S_1) = \ran(S_2)$ or, equivalently, 
if there is an intertwining Markov isomorphism 
$S: \Ell{1}(\prY_1)\to \Ell{1}(\prY_2)$ such that $S_2S = S_1$.
(See \cite[Section 13.4]{EFHN} for alternative descriptions of 
a factor.)

A {\emdf point factor map} of a system $\bfX$ to a system $\bfY$ 
is a measurable and measure-preserving map $\pi: X \to Y$ 
such that $\pi \after \vphi = \psi \after \pi$ almost everywhere. 
The associated Koopman operator 
\[ S : \Ell{1}(\prY) \to \Ell{1}(\prX),\qquad  Sf := f\after \pi
\]
is then an intertwining Markov embedding, and hence constitutes
a factor in our sense. By von Neumann's theorem, if
the underlying probability spaces are standard, then every
intertwining Markov embedding is induced by a point factor map,
cf.~\cite[Chapter 12]{EFHN} and, in particular, 
\cite[Appendix F]{EFHN}.

\section{Algebraic and Dynamic Preliminaries}\label{s.defs}

Let us start with some purely algebraic preparations.
The relevance of these will become  clear afterwards when we
turn to dynamical systems.

\subsection{Signatures}\label{ss.signatures}

\noindent
Suppose that $G$ is a (multiplicative) Abelian group and $\Lambda: G \to G$
is a homomorphism. Consider the homomorphism
\[ \Phi: G \to G, \quad \Phi(g) := g \cdot \Lambda g.
\]
Then the binomial theorem yields
\begin{equation}\label{defs.e.binom}
 \Phi^n g = \prod_{j=0}^n (\Lambda^j g)^{n\choose j} \qquad (g\in G).
\end{equation}
This is easy to see if one writes the group additively and notes
that in this case $\Phi = (\id + \Lambda)$.

Let us define the increasing chain of subgroups
\[ G_n := \ker(\Lambda^{n}) \qquad (n \ge 0),
\]
so that $G_{0} = \car$.
Then $\Lambda: G_n \to G_{n-1}$ for $n\ge 1$. Moreover, $\Phi$ restricts
to an automorphism on each $G_n$. (This is again easily seen 
by writing the group additively; the ``Neumann series''
\[ \Phi^{-1} = (\id + \Lambda)^{-1} = \sum_{j=0}^\infty (-1)^j \Lambda^j.
\]  
terminates when applied to elements of $G_n$, and yields the inverse 
$\Phi^{-1}$ of $\Phi$.)   

Recall that $\Lambda$ is called {\emdf nilpotent} if $G = G_n$ for some $n \in \N$.
We call $\Lambda$ {\emdf quasi-nilpotent} if $G = \bigcup_{n\ge 0} G_n$. 
Note that if $\Lambda \in \End(G)$ is (quasi-)nilpotent, then so is its
restriction to $\Lambda(G)$, as well as  the induced homomorphism 
(by abuse of language)
\[ \Lambda: \fact{G}{G_n} \to \fact{G}{G_n}, \quad \Lambda(gG_n) :=
(\Lambda g) \, G_n
\]
for every $n\in \N$. 



\medskip

Recall that a group $H$ is called {\emdf torsion-free} if
$H$ has no elements of finite order other than the neutral element.

\begin{lemma}\label{defs.l.torsion-free}
Let $H$ be an Abelian group and $\Lambda: H\to H$  a quasi-nilpotent
homomorphism, with associated subgroups
$H_n = \ker(\Lambda^{n})$  as above. Then the induced homomorphism
\[ \Lambda: \fact{H_{n{+}1}}{H_n} \to \fact{H_{n}}{H_{n{-}1}}
\]
is injective.  Moreover, the following assertions are equivalent:
\begin{aufzii}
\item $H_1$ is torsion-free.
\item $H$ is torsion-free.
\item $\fact{H_{n+1}}{H_n}$ is torsion-free for every $n \ge 0$. 
\end{aufzii}
Moreover, if {\upshape (i)--(iii)} hold, then, with $\Phi$ defined as above,
for each $m \ge 1$ and $h\in H_{m+1} \without H_m$ the elements
\[ \Phi^n h, \quad n \ge 0
\]
are pairwise distinct modulo $H_{m-1}$. 
\end{lemma}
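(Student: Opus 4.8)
The plan is to reduce everything modulo $H_{m-1}$ and exploit the binomial formula \eqref{defs.e.binom} together with the torsion-freeness of the quotient $\fact{H_m}{H_{m-1}}$ provided by assertion~(iii). Fix $m \ge 1$ and $h \in H_{m+1} \without H_m$. Since $\Lambda^{m+1} h = \car$, one has $\Lambda^j h \in \ker(\Lambda^{m+1-j}) = H_{m+1-j}$, so that $\Lambda^j h \in H_{m-1}$ as soon as $j \ge 2$. Hence, when the product in \eqref{defs.e.binom} is read modulo $H_{m-1}$, all factors with $j\ge 2$ disappear and only the terms $j=0$ and $j=1$ survive. This gives
\[ \Phi^n h \equiv h \cdot (\Lambda h)^n \pmod{H_{m-1}} \qquad (n \ge 0). \]

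Next I would record that $\Lambda h$ represents a nontrivial element of $\fact{H_m}{H_{m-1}}$: indeed $\Lambda^m(\Lambda h) = \Lambda^{m+1} h = \car$ shows $\Lambda h \in H_m$, while $\Lambda^{m-1}(\Lambda h) = \Lambda^m h \neq \car$ (because $h \notin H_m$) shows $\Lambda h \notin H_{m-1}$. Now suppose $\Phi^n h \equiv \Phi^{n'} h \pmod{H_{m-1}}$ for some $n, n' \ge 0$. Cancelling the common factor $h$ in the displayed congruence, which is legitimate since we are computing in the Abelian group $\fact{H}{H_{m-1}}$, yields $(\Lambda h)^{n - n'} \in H_{m-1}$; that is, the image of $\Lambda h$ in $\fact{H_m}{H_{m-1}}$ has $(n-n')$-th power equal to the identity.

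Finally I would invoke assertion~(iii) with index $m-1 \ge 0$: the group $\fact{H_m}{H_{m-1}}$ is torsion-free. Since the image of $\Lambda h$ there is nontrivial, the relation $(\Lambda h)^{n-n'} \in H_{m-1}$ forces $n - n' = 0$, that is $n = n'$. This shows that the $\Phi^n h$ are pairwise distinct modulo $H_{m-1}$. I do not expect a genuine obstacle here; the only point requiring care is the bookkeeping that makes the higher binomial terms ($j \ge 2$) vanish modulo $H_{m-1}$ — everything then collapses to the single nontrivial generator $\Lambda h$ living in a torsion-free quotient, which is precisely where hypothesis~(iii) enters.
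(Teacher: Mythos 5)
Your treatment of the final assertion is correct and is essentially the paper's own argument: both reduce $\Phi^n h$ modulo $H_{m-1}$ via the binomial formula \eqref{defs.e.binom} to obtain $\Phi^n h \equiv h(\Lambda h)^n$, and then combine the torsion-freeness of $\fact{H_m}{H_{m-1}}$ coming from (iii) with the observation $\Lambda h \in H_m \without H_{m-1}$ to force equality of the exponents. In fact you are somewhat more detailed than the paper, which does not spell out why the binomial factors with $j \ge 2$ vanish modulo $H_{m-1}$, nor why $\Lambda h \notin H_{m-1}$; your bookkeeping of both points is accurate.

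However, your proposal proves only the last assertion of the lemma. The statement also claims (a) that the induced map $\Lambda: \fact{H_{n+1}}{H_n} \to \fact{H_n}{H_{n-1}}$ is injective, and (b) that (i), (ii), (iii) are equivalent; you never address these. Invoking (iii) is legitimate for the final assertion, since that assertion is stated under the hypothesis that (i)--(iii) hold, but it does not discharge the obligation to prove the equivalence itself. These missing parts are routine, and the paper disposes of them in one sentence, but a complete proof should record them: injectivity holds because $\Lambda h \in H_{n-1}$ forces $\Lambda^{n} h = \car$, i.e.\ $h \in H_n$; iterating these injections embeds each $\fact{H_{n+1}}{H_n}$ into $H_1$, which gives (i)$\Rightarrow$(iii); a nontrivial torsion element of $H$ lies in some $H_{n+1} \without H_n$ and projects to a nontrivial torsion element of $\fact{H_{n+1}}{H_n}$, giving (iii)$\Rightarrow$(ii); and (ii)$\Rightarrow$(i) is immediate since $H_1 \le H$.
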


\begin{proof}
The first assertion follows directly from the
definition of $H_n$ and 
renders straightforward the proof of the stated equivalence. For the remaining 
assertion, note that it follows from the
binomial formula \eqref{defs.e.binom} that for all $m\ge 1$ and $n \ge 0$ and 
$h\in H_{m{+}1}$  one has
\[ \Phi^n h = h (\Lambda h)^n  \quad \mod{H_{m{-}1}}.
\]
Hence, if $n \ge k \ge 0$ and $\Phi^n h = \Phi^k h$ $\mod{H_{m{-}1}}$, 
then 
$(\Lambda h)^{n-k} = 1$ $\mod{H_{m{-}1}}$, which 
implies $n = k$, by (iii) and the fact that $\Lambda h \notin H_{m{-}1}$.  
\end{proof} 

A triple $(G, \Lambda, \iota)$ is called a {\emdf \signature} if 
$G$ is an Abelian group, $\Lambda : G \to G$ is a quasi-nilpotent homomorphism 
and 
\[ \iota: G_1 \to \torus
\]
is a monomorphism (= injective homomorphism), 
where $G_1 = \ker(\Lambda)$ as above.  
The \textbf{order} of the signature $(G, \Lambda, \iota)$ is
\[
\ord(G,\Lambda,\iota):=\inf\{n\in \N \st G = G_n\} \in \N \cup\{\infty\},
\]
in the sense that the order is infinite if $G \neq G_n$ for all $n \in \N$, i.e.,
if $\Lambda$ is not already nilpotent.

From the original \signature\ $(G, \Lambda, \iota)$ one can canonically 
derive new signatures. First, one can pass to $(\Lambda(G), \Lambda, \iota)$
where we write, for simplicity, again $\Lambda$ and $\iota$ for
the respective restrictions of $\Lambda$ to $\Lambda(G)$ and
$\iota$ to $\Lambda(G_2) \leq G_1$.

Second, for
each $n \in \N_0$ we obtain a derived signature 
$(\fact{G}{G_n}, \Lambda, \widetilde{\iota})$
in the following way.
The homomorphism $\Lambda$ canonically induces a monomorphism (again 
denoted by $\Lambda$) at each step in the following chain:
\[ 
\fact{G_{n+1}}{G_n} \stackrel{\Lambda}{\pfeil} 
\fact{G_n}{G_{n-1}} \stackrel{\Lambda}{\pfeil} 
\dots \stackrel{\Lambda}{\pfeil}  \fact{G_1}{G_0} = \fact{G_1}{\{\car\}} = G_1. 
\]
Hence
$\widetilde{\iota}:= \iota \circ  \Lambda^n:  \fact{G_{n+1}}{G_n} \to \torus$ 
is a monomorphism. But $\fact{G_{n+1}}{G_n}$ is precisely the kernel 
of $\Lambda$ when considered as a quasi-nilpotent 
homomorphism on $\fact{G}{G_n}$.

\medskip

A {\emdf morphism} $\alpha: (G, \Lambda, \iota) \to (\tilde{G}, 
\tilde{\Lambda}, \tilde{\iota})$ of signatures
is every group homomorphism $\alpha : G \to \tilde{G}$ such that
$\tilde{\Lambda} \after \alpha = \alpha \after \Lambda$ and 
$\tilde{\iota} \after \alpha = \iota$ on $G_1$. It is then 
easily proved by induction that  
$\alpha\restrict{G_n}$ is injective for each $n \in \N$. Consequently,
{\em every morphism
$\alpha$ of signatures is injective.}
If $\alpha$ is bijective 
then its inverse is also a morphism of signatures, and 
$\alpha$ is an {\emdf isomorphism}.
For example, the derived signatures $(\Lambda(G), \Lambda, \iota)$ 
and $(\fact{G}{G_1}, \Lambda, \widetilde{\iota})$ are isomorphic
via the induced isomorphism $\Lambda: \fact{G}{G_1} \to \Lambda(G)$.

\subsection{Topological systems with quasi-discrete spectrum}\label{ss.tds-qds}

Signatures arise naturally in the context of dynamical systems. 
Let $\bfK =(K; \psi)$ be a topological dynamical system with 
Koopman operator $T$ on $\Ce(K)$. Then the set 
\[ \Ce(K; \torus) = \{ f\in \Ce(K) \st \abs{f} = \car\}
\]
is an Abelian group, and 
\[ \Lambda_\bfK : \Ce(K; \torus)\to \Ce(K; \torus),\qquad 
\Lambda_\bfK f := Tf \cdot \conj{f}
\]
is a homomorphism, called the {\emdf derived homomorphism}. 
If $\bfK$ is understood, we simply  write $\Lambda$ in place of 
$\Lambda_\bfK$.  Note that  $Tf =f \cdot \Lambda f$, hence 
$T$ takes the role of $\Phi$ from above. In particular,  one has the formula
\begin{equation}\label{defs.e.T-binom}
 T^n f = \prod_{j=0}^n (\Lambda^j f)^{n \choose j} 
\end{equation}
for each $f\in \Ce(K;\torus)$ and $n \ge 0$.

Define $G_n(\bfK) := \ker(\Lambda^{n})$ for $n\ge 0$ and
\[ G(\bfK) := \bigcup_{n\ge 0} G_n(\bfK) = \bigcup_{n\ge 0} \ker(\Lambda^n). 
\]
(For simplicity, we write $G_n$ and $G$ if $\bfK$ is understood.)
Then $G(\bfK)$ is an Abelian group and $\Lambda= \Lambda_\bfK$ is
a quasi-nilpotent homomorphism on it. 
Note that $G_1 = \ker(\Lambda) = \fix(T) \cap \Ce(K;\torus)$.

The elements of $G_n$ are called (unimodular) {\emdf
  quasi-eigenvectors} of order $n{-}1$ (cf.{} Remark
\ref{defs.r.labelling} below), and $G = \bigcup_{n=0}^\infty G_n$ is
the group of all (unimodular) quasi-eigenvectors. Correspondingly,
each element of 
\[ H_{n{-}1} = H_{n{-}1}(\bfK) := \Lambda(G_n)
\]
is called a  {\emdf quasi-eigenvalue} of order $n{-}1$, and 
\[ H := H(\bfK) := \bigcup_{n\ge 0} H_{n} = \bigcup_{n \ge 1} \Lambda(G_n)
= \Lambda(G) 
\]
is the group of all quasi-eigenvalues. 
This terminology
derives from the fact that the elements of  $G_n$  are precisely the
unimodular solutions $f$ of an equation $Tf = gf$, where $g \in
G_{n{-}1}$
(in which case $g \in H_{n{-}1}$).

\medskip

Let us now suppose that {\em $\fix(T)$ is one-dimensional}, i.e., 
consists of the constant functions only. (This is the case, e.g.,
if $\bfK$ is a minimal system.) 
Then the group $G_2$ consists of 
all the unimodular eigenfunctions of $T$ corresponding
to unimodular eigenvalues, and $H_1 = \Lambda(G_2)$ is
the group of unimodular eigenvalues of $T$. (Indeed, since
$f\in G_2$, the function $\Lambda f$ is constant
and since $Tf = (\Lambda f) f$, $\Lambda f$ is a unimodular eigenvalue
with eigenfunction $f$. Conversely, if $Tf = \lambda f$ for some
nonzero function $f\in \Ce(K)$ and $\lambda \in \torus$, then 
$\abs{f} \in \fix(T)$, hence we can rescale and suppose without
loss of generality that $\abs{f} =1$, i.e. $f$ is unimodular. 
Then $\Lambda f = (Tf)  \conj{f} = \lambda \car \in G_1$ and
hence $f\in G_2$.)

Still under the hypothesis $\fix(T) = \C\car$, the  mapping
$\iota:= \iota_\bfK: \fix(T) \to \C$, which  maps a constant function to its value,
is an isomorphism of vector spaces. (Note that $\iota(f) = f(x_0)$ for 
$f\in \fix(T)$ and arbitrary $x_0\in K$.) 
Hence, its restriction $\iota: G_1  \to \torus$ to $G_1$ is an isomorphism of groups, and 
$(G, \Lambda, \iota)$ is a \signature.

The derived signature 
$\sig(\bfK) := (H, \Lambda, \iota)$
is called the {\emdf signature (of quasi-eigenvalues) of the system
$\bfK$}. Recall from above that this signature is, via $\Lambda$, 
isomorphic  to the signature  $(\fact{G}{G_1}, \Lambda, \widetilde{\iota})$. 
The topological system $\bfK$ is said to have
{\emdf quasi-discrete spectrum} if 
the linear hull of all quasi-eigenvectors is dense
in $\Ce(K)$, i.e., if
\[ \cspan^{\Ce(K)}(G) = \Ce(K).
\]

\subsection{Measure-preserving  systems with quasi-discrete spectrum}%
\label{ss.mps-qds}

A similar construction and terminology applies for a
measure-preser\-ving system $\bfX = \MDSX$ with Koopman operator $T$ on 
$\Ell{1}(\prX)$. Again one considers the {\emdf derived} group homomorphism
$\Lambda = \Lambda_\bfX$ defined by 
\[ 
\Lambda f := \Lambda_\bfX f := Tf \cdot \conj{f}
\]
on the Abelian group
\[ \Ell{0}(\prX; \torus) := \{ f \in \Ell{\infty}(\prX) \st
\abs{f} =1 \,\,\text{(a.e.)}\},
\]
which has kernel $G_1 := G_1(\bfX) := \fix(T) \cap \Ell{0}(\prX;
\torus)$,
the group of unimodular fixed functions.
As in the topological case,
we let  $G_n(\bfX) := \ker(\Lambda^{n})$ for $n\ge 0$ be the 
group of (unimodular) {\emdf quasi-eigenvectors} of order $n{-}1$ and
\[ G(\bfX) := \bigcup_{n\ge 0} G_n(\bfX) = \bigcup_{n\ge 0} \ker(\Lambda^n). 
\]
(Again,  we write $G_n$ and $G$ if $\bfX$ is understood.) Analogously,
\[ H_{n{-}1} = H_{n{-}1}(\bfX) = \Lambda(G_n)
\]
is the group of (unimodular) {\emdf quasi-eigenvalues} of
order $n{-}1$, and 
\[  H:= H(\bfX) := \Lambda(G(\bfX)) = \bigcup_{n=0}^\infty H_{n{-}1}
\]
is the group of all quasi-eigenvalues.

Now suppose that the system $\bfX$ is ergodic. Then all fixed
functions are essentially constant, so
$G_1 = \ker(\Lambda) = \{ c\car \st c\in \torus\}$.
Again, we denote by 
\[ \iota = \iota_\bfX: G_1 \to \torus, \qquad  \iota(c \car) = c
\]
the canonical monomorphism.  Then $(G, \Lambda, \iota)$ 
is a \signature. The derived \signature\
\[
\sig(\bfX) := (H, \Lambda, \iota)
\] 
is called the {\emdf \signature\ of the system $\bfX$}. 

The system $\bfX$ is said to have
{\emdf  quasi-discrete spectrum} if $G$ is a 
total subset of 
$\Ell{2}(\prX)$, i.e., if 
\[ \cspan^{\Ell{2}}(G) = \Ell{2}(\prX).
\]

The simplest nontrivial system with quasi-discrete spectrum 
is the skew shift. We describe this system and compute its signature below.

\begin{exa}[Skew shift]
\label{ex.mps-qds.ss}
Let $\prX$ be the two-dimensional torus $\T^2$ (written additively mod $1$) 
with the Lebesgue measure, and $\vphi$ be the transformation 
\[
\vphi: \torus^2 \to \torus^2 , \qquad \vphi(x,y) := (x+\alpha ,x+y )
\]
for some irrational $\alpha \in (0,1)$. 
The associated measure-preserving system  $\bfX = (\prX;\vphi)$ 
is called the skew shift. It is known that 
the skew shift is totally ergodic. (Cf.~\cite[Prop.~10.17]{EFHN}
for a proof of ergodicity; this proof can easily be adapted to
yield even total ergodicity.)

Write $\ue_k(t) := \ue^{2\upi \ui k t}$ for $t\in (0,1)$. Then some
computation shows that 
\[ G_2(\bfX) = \{ \lambda \ue_k\tensor \car \st k \in \Z,\: \lambda \in \torus\}
\]
and
\[ 
 G_3(\bfX) = \{ \lambda \ue_k \tensor \ue_l \st k,\:l \in \Z,\: \lambda\in 
\torus \}.
\]
It follows that $\bfX$ has quasi-discrete spectrum and $G(\bfX) = G_3(\bfX)$
(see Corollary \ref{fac.c.orth} below). 
Another little computation yields
\[ \Lambda(\lambda \ue_k \tensor \ue_l) = \ue_k(\alpha) (\ue_l \tensor \car)
 \qquad (k, l \in \Z,\: c\in \torus)
\]
from which it follows that 
\[ H_0(\bfX) = \{ \mathbf{1}\},\qquad 
H_1(\bfX) = \{ \ue_k(\alpha) (\car \tensor \car) \st  k \in \Z\}
\]
and
\[ H(\bfX) = H_2(\bfX) = \{ \ue_k(\alpha) (\ue_l \tensor \car) \st
 k,l \in \Z\}.
\]
This means that $\sig(\bfX) \simeq (\Z^2 , \Lambda, \iota)$, where
$\Lambda$ is the two-step nilpotent homomorphism
\[ \Lambda: \Z^2 \to \Z^2, \qquad \Lambda(k,l) = (l,0)
\]
and $\iota: \Z \to \torus$ is the embedding given by 
$\iota(k,0):= \ue_k(\alpha)$.
\end{exa}

\begin{rem}\label{defs.r.labelling}
Let us stress the fact that our terminology deviates slightly from the
standard one (established first by Abramov and continued by  successors).
In Abramov's work, the labelling of the groups $G_n$ is shifted to the
effect that what we call $G_{n}$ would be $G_{n-1}$ in Abramov's
terminology. We have chosen for this change in order to have a unified 
labelling for the significant subgroups associated with a quasi-nilpotent
endomorphism of an Abelian group.

Other authors (e.g. Lesigne \cite{Lesigne1993}) 
in the case of an ergodic
system $\bfX$ write
$E_0(T)$ for the set of eigenvalues and define recursively
\[ E_k(T) = \{ f\in \Ell{0}(\prX;\torus) \st \Lambda f  \in E_{k{-}1}(T)\}
\]
for $k \ge 1$. This means  that 
\[ E_0(T) = H_1(\bfX)\quad  \text{and} \quad E_k(T) = G_{k{+}1}(\bfX)
\quad \text{for $k\ge 1$}
\]
if the system $\bfX$ is ergodic.
\end{rem}

\subsection{Affine automorphisms}\label{ss.affauto}

\noindent
Let $\Gamma$ be a compact Abelian group, $\eta\in \Gamma$ and 
$\Psi: \Gamma  \to \Gamma$ a continuous automorphism of $\Gamma$.  Then the
mapping 
\[  \psi: \Gamma \to \Gamma, \qquad  \psi(\gamma) := \Psi(\gamma)\cdot \eta
\]
is called an {\emdf affine automorphism}. The topological dynamical system
$(\Gamma;\psi)$
 is called an {\emdf affine automorphism system}, and denoted by 
$(\Gamma; \Psi, \eta)$. 
Clearly, the Haar measure is invariant under $\psi$, hence this gives rise
also to a measure-preserving system $(\Gamma, \haar;\Psi,\eta)$.

Suppose that $H$ is a (discrete) Abelian group, $\Lambda \in \End(H)$ 
is quasi-nilpotent  and $\eta \in H^*$, the (compact) dual group.  
Then $\Phi : H \to H$, defined by $\Phi(h) = h \Lambda h$ is an automorphism
of $H$. Passing to the dual group $H^*$ we obtain the dual automorphism
$\Phi^*\in \Aut(H^*)$, and $(H^*; \Phi^*,\eta)$ is an 
affine automorphism system. 

The following result says that the conjugacy class of such an 
affine automorphism system is determined by $\Phi$ and the restriction 
of $\eta$ to $H_1$.

\begin{thm}\label{defs.t.sig-qds-conj}
Let $H$ be a (discrete) Abelian group, $\Lambda: H \to H$  a quasi-nilpotent homomorphism, 
with induced automorphism $\Phi = \Id \cdot \Lambda$ as above, and  let $\eta \in H^*$. 
Then, for fixed $\gamma\in H^*$ the  rotation map 
\[ R_{\gamma} : H^* \to H^*,\qquad  \chi \mapsto \chi \gamma
\]
induces an isomorphism (conjugacy) 
\[ R_\gamma : (H^*; \Phi^*, \eta \Lambda^*\gamma)  \stackrel{\cong}{\longrightarrow} 
(H^*; \Phi^*, \eta) 
\]
of (topological) affine automorphism systems. Moreover, the set
\[\{ \eta \,\Lambda^*\gamma \st \gamma\in H^*\}
\]
consists precisely of those $\chi \in H^*$ which coincide with $\eta$ on $H_1$.
\end{thm}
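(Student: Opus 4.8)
The plan is to prove the two assertions separately, each reducing to a short computation in the dual group $H^*$ once the dual of $\Phi$ is identified. The key preliminary is to dualize the relation $\Phi = \Id\cdot\Lambda$: for $\chi\in H^*$ and $h\in H$ we have $(\Phi^*\chi)(h) = \chi(\Phi h) = \chi(h)\,\chi(\Lambda h) = (\chi\cdot\Lambda^*\chi)(h)$, whence $\Phi^* = \Id\cdot\Lambda^*$ on $H^*$, exactly mirroring the structure on $H$.

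For the conjugacy, write $\psi_\beta(\chi) := \Phi^*\chi\cdot\beta$ for the affine map underlying the system $(H^*;\Phi^*,\beta)$; the assertion is that $R_\gamma\circ\psi_{\eta\Lambda^*\gamma} = \psi_\eta\circ R_\gamma$. Evaluating on $\chi$, the left-hand side gives $\Phi^*\chi\cdot(\eta\,\Lambda^*\gamma\cdot\gamma)$ and the right-hand side gives $\Phi^*(\chi\gamma)\cdot\eta = \Phi^*\chi\cdot(\Phi^*\gamma\cdot\eta)$; these coincide precisely because $\Phi^*\gamma = \gamma\cdot\Lambda^*\gamma$ by the identity just derived. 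Since $R_\gamma$ is translation by a fixed element of the compact group $H^*$, it is a homeomorphism with inverse $R_{\gamma^{-1}}$, so this intertwining relation promotes it to a topological conjugacy, as claimed.

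For the second assertion I would first note that a character $\chi$ agrees with $\eta$ on $H_1 = \ker\Lambda$ if and only if $\chi\eta^{-1}$ is trivial on $H_1$, i.e. $\chi\in\eta\cdot H_1^\perp$, where $H_1^\perp := \{\theta\in H^* : \theta|_{H_1} = \car\}$ is the annihilator of $H_1$. As $\{\eta\,\Lambda^*\gamma : \gamma\in H^*\} = \eta\cdot\ran(\Lambda^*)$, the claim reduces to the identity $\ran(\Lambda^*) = H_1^\perp$. The inclusion $\ran(\Lambda^*)\subseteq H_1^\perp$ is immediate, since $\Lambda^*\gamma = \gamma\circ\Lambda$ kills $\ker\Lambda$. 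For the reverse inclusion, a character $\chi\in H_1^\perp$ factors through $H/\ker(\Lambda)\cong\Lambda(H)$, giving a character $\bar\chi$ of the subgroup $\Lambda(H)\le H$ with $\chi = \bar\chi\circ\Lambda$; extending $\bar\chi$ to some $\gamma\in H^*$ then yields $\Lambda^*\gamma = \chi$. The one nontrivial input --- and the step I expect to be the crux --- is this extension of a character from a subgroup of a discrete abelian group to the whole group, which relies on the divisibility (injectivity as an abelian group) of $\torus$. With that in hand the two inclusions give $\ran(\Lambda^*) = H_1^\perp$, completing the description of the set.
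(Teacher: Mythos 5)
Your proof is correct and follows essentially the same route as the paper: the conjugacy is the same direct computation (which the paper leaves implicit as ``straightforward''), and your identification $\ran(\Lambda^*) = H_1^\perp$ via factoring a character through $H/\ker\Lambda \cong \Lambda(H)$ and extending by divisibility of $\torus$ is exactly the paper's argument, merely phrased in annihilator language. The only added value is that you spell out the intertwining computation $R_\gamma\circ\psi_{\eta\Lambda^*\gamma} = \psi_\eta\circ R_\gamma$ using $\Phi^* = \Id\cdot\Lambda^*$, which the paper omits.
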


\begin{proof}
The first assertion is established by a straightforward computation.
For the second, note first that $\Lambda^*\gamma = \gamma \after \Lambda = 1$ 
on $H_1$. 
Hence $\eta \Lambda^*\gamma = \eta$ on $H_1$. Conversely suppose that 
$\chi \in H^*$ and $\chi = \eta$ on $H_1$. Then $\chi\eta^{-1} = 1$ on 
$H_1$, hence
one can define 
$\gamma_1: \Lambda(H) \to \torus$ by 
\[ \gamma_1 ( \Lambda(h)) := (\chi \eta^{-1})(h) \qquad (h \in H).
\]
Now, if $\gamma\in H^*$ extends $\gamma_1$, then 
$\chi = \eta \Lambda^*\gamma$ as claimed. Note that 
such an extension always exists since $\torus$ is divisible,
cf.{}  also \cite[Prop.{}14.27]{EFHN}
\end{proof}

If $(H, \Lambda, \iota)$ is a signature and $\eta : H \to \torus$ is
any homomorphism that extends $\iota$, then the affine automorphism
system $(H^*; \Phi^*, \eta)$ is called {\emdf associated} 
with the signature $(H, \Lambda, \iota)$. By the result above, all
affine automorphism systems associated with the same signature are
topologically conjugate. Since the topological conjugacy is a rotation
and hence preserves the Haar measure, it is also a conjugacy
for the measure-preserving systems.

By the results of Hahn--Parry and Abramov
(see Theorems \ref{tqds.t.real} and \ref{mqds.t.real} below), if
$(H, \Lambda,\iota)$ is a signature such that  $H_1$ is torsion-free, then 
any associated topological system $(H^*; \Phi^*,\eta)$ as well as
the corresponding measure-preserving system $(H^*, \haar; \Phi^*,\eta)$
has quasi-discrete spectrum with signature $(H, \Lambda,\iota)$.

\section{Topological systems with quasi-discrete spectrum}\label{s.tqds}

From now on, we let $\bfK = (K; \psi)$ be a topological system 
such that $\fix(T)$ is one-dimensional, where $T$ is, as always, the
corresponding Koopman operator on $\Ce(K)$.  
Suppose that $\bfK$ has quasi-discrete spectrum 
with the additional property that the
group of eigenvalues
\[ H_1 = \Lambda(G_2) \cong \fact{G_2}{G_1} \lesssim  \torus
\]
is torsion-free. 
Equivalently, by Lemma \ref{defs.l.torsion-free},
the group $H$ of all quasi-eigenvalues is torsion-free.
Under these hypotheses we obtain the following result,
obtained first by Hahn and Parry \cite{HahnParry1965}.

\begin{lemma}\label{tqds.l.asymp}
Let $\bfK$ be a topological system with quasi-discrete spectrum
such that $\dim\fix(T) =1$ and the group $H_1$ of unimodular eigenvalues is
torsion free. Then
\[ 
\lim\limits_{N\to \infty} \frac 1 N \sum\limits_{j=0}^{N-1} (T^j f)(x) = 
0 \quad
\text{for every $x\in K$ and $f\in G\setminus G_1$.}  
\]
\end{lemma}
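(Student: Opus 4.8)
The plan is to induct on the order of the quasi-eigenvector, using van der Corput's difference theorem to pass from one order to the next. Since every $f \in G \setminus G_1$ lies in $G_m \setminus G_{m-1}$ for a unique $m \ge 2$, it suffices to prove, for each fixed $m \ge 2$, the assertion $P(m)$: for all $f \in G_m \setminus G_{m-1}$ and all $x \in K$ one has $\frac1N \sum_{j=0}^{N-1}(T^j f)(x) \to 0$ as $N \to \infty$.

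For the base case $m = 2$, an $f \in G_2 \setminus G_1$ is a genuine eigenfunction: $\Lambda f$ is a constant $\lambda \in \torus$ with $\lambda \ne 1$ (since $f \notin G_1 = \ker \Lambda$), and $Tf = \lambda f$, so $(T^j f)(x) = \lambda^j f(x)$. The geometric sum then gives
\[
 \frac1N\sum_{j=0}^{N-1}\lambda^j = \frac1N \cdot \frac{\lambda^N - 1}{\lambda - 1} \longrightarrow 0,
\]
which settles $P(2)$ (here only $\lambda \neq 1$ is needed, not full torsion-freeness).

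For the inductive step I would assume $P(m)$ and take $f \in G_{m+1} \setminus G_m$. Fix $x \in K$ and put $u_n := (T^n f)(x)$, a unimodular scalar sequence. For fixed $h \ge 1$ the correlation $u_{n+h}\overline{u_n}$ can be rewritten using that the Koopman operator $T$ is multiplicative on $\Ce(K)$ and commutes with complex conjugation: since $\abs{f} = \car$,
\[
 u_{n+h}\,\overline{u_n} = \bigl(T^{n+h}f\bigr)(x)\,\bigl(T^n \conj f\bigr)(x) = \bigl(T^n(T^h f \cdot \conj f)\bigr)(x) = (T^n g_h)(x),
\]
where $g_h := T^h f \cdot \conj f$. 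By the binomial formula \eqref{defs.e.T-binom} and $\binom{h}{0}=1$, the factor $f$ cancels and $g_h = \prod_{j=1}^h (\Lambda^j f)^{\binom hj}$. Each factor lies in $G_m$, and modulo $G_{m-1}$ all terms with $j \ge 2$ are trivial, so $g_h \equiv (\Lambda f)^h \pmod{G_{m-1}}$. As $f \in G_{m+1}\setminus G_m$ forces $\Lambda f \in G_m \setminus G_{m-1}$, and $\fact{G_m}{G_{m-1}}$ is torsion-free by the equivalence in Lemma \ref{defs.l.torsion-free}, we obtain $(\Lambda f)^h \notin G_{m-1}$ for every $h \ge 1$; hence $g_h \in G_m \setminus G_{m-1}$.

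The induction hypothesis $P(m)$ applied to $g_h$ then yields $\frac1N \sum_{n=0}^{N-1}(T^n g_h)(x) \to 0$, i.e.\ $\frac1N\sum_{n=0}^{N-1} u_{n+h}\,\overline{u_n} \to 0$ for every $h \ge 1$. The classical van der Corput difference theorem for bounded scalar sequences (see, e.g., \cite{EFHN}) then forces $\frac1N \sum_{n=0}^{N-1} u_n \to 0$, establishing $P(m+1)$. The step I expect to require the most care is the correlation identity $u_{n+h}\overline{u_n} = (T^n g_h)(x)$ together with the order bookkeeping showing that $g_h$ drops exactly one level, to $G_m \setminus G_{m-1}$: this is precisely where torsion-freeness is indispensable, since it prevents $(\Lambda f)^h$ from collapsing into $G_{m-1}$ and thereby keeps the induction hypothesis applicable. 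Everything else is a black-box invocation of van der Corput together with the multiplicativity of $T$.
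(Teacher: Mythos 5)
Your proof is correct, but it takes a genuinely different route from the paper's. The paper argues directly, with no induction: for $f \in G_{k+1}\setminus G_k$ it uses the binomial identity \eqref{defs.e.T-binom} to write $(T^nf)(x) = f(x)\,\ue^{2\upi\ui p_x(n)}$, where $p_x$ is a real polynomial of degree $k$ whose leading coefficient $\theta_k(x)/k!$ is irrational --- this is where torsion-freeness enters, since $\Lambda^k f$ is a constant function representing a non-trivial element of $H_1$ and hence not a root of unity --- and then invokes Weyl's equidistribution theorem for polynomial sequences. Your induction on the order of the quasi-eigenvector, with van der Corput's difference theorem driving the step, in effect inlines the standard proof of Weyl's theorem (itself an induction on degree via van der Corput) into the dynamical setting: your correlation function $g_h = T^h f\cdot\conj{f}$ plays the role of the differenced polynomial phase, and your condition $(\Lambda f)^h \notin G_{m-1}$ is the exact analogue of the paper's irrational leading coefficient. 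What your approach buys is that equidistribution theory is not needed as a black box, only the scalar van der Corput lemma, and all bookkeeping stays inside the group formalism; what the paper's approach buys is brevity --- one computation plus one citation.

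One justification in your inductive step needs repair. You claim that $\fact{G_m}{G_{m-1}}$ is torsion-free ``by the equivalence in Lemma \ref{defs.l.torsion-free}''. That equivalence cannot be applied to $(G,\Lambda)$: its condition (i) concerns $\ker\Lambda = G_1$, and here $G_1 \cong \torus$ has torsion (correspondingly, assertion (iii) fails for $G$ at $n=0$). The fact you need is nevertheless true for all $m\ge 2$, and it follows from the \emph{first} (injectivity) assertion of that same lemma: the induced maps give embeddings $\fact{G_m}{G_{m-1}} \hookrightarrow \fact{G_{m-1}}{G_{m-2}} \hookrightarrow \cdots \hookrightarrow \fact{G_2}{G_1} \cong H_1$, and $H_1$ is torsion-free by hypothesis. (Alternatively, apply the equivalence to $H = \Lambda(G)$, whose kernel filtration is $H\cap G_n$ with $\ker(\Lambda|_H) = H_1$, and note that $\Lambda f$ and all its powers lie in $H$, so $(\Lambda f)^h \in G_{m-1}$ would force $(\Lambda f)^h \in H\cap G_{m-1}$.) With this one-line correction the induction closes; the rest of your argument --- the identity $u_{n+h}\conj{u_n} = (T^n g_h)(x)$, the computation $g_h \equiv (\Lambda f)^h \pmod{G_{m-1}}$, and the appeal to van der Corput --- is sound.
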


\begin{proof}
Let $f \in G_{k{+}1} \setminus G_k$ for some $k \ge 1$. For $n \ge k$ and 
$x\in K$, 
\[ T^n f(x) = \prod_{j=0}^n (\Lambda^j f)^{n \choose j }(x)
=   \prod_{j=0}^{k} [(\Lambda^j f)(x)]^{n \choose j} = f(x) \ue^{2\upi \ui p_x(n)},
\]
where $p_x(n) = \sum_{j=1}^k { n\choose j} \theta_j(x)$ and 
$\theta_j(x) \in \R$
is such that $(\Lambda^j f)(x) = \ue^{2\upi \ui \theta_j(x)}$. 
The leading 
coefficient of the polynomial $p_x$ is $\theta_k(x)/k!$, and this is
irrational since $\Lambda^k f \in H_1$ and $H_1$ is torsion-free. 
By Weyl's equidistribution theorem, 
\[ \lim_{N\to \infty} \frac{1}{N} \sum_{n=0}^{N-1} (T^n f)(x) =
 f(x) \lim_{N\to \infty} \frac{1}{N} \sum_{n=0}^{N-1} \ue^{2\upi \ui p_x(n)} = 0.
\]
\end{proof}

The lemma yields immediately the following theorem.

\begin{thm}\label{tqds.t.unierg}
Let $\bfK$ be a topological system with quasi-discrete 
spectrum such that $\dim\fix(T)=1$ and
the group of quasi-eigenvalues is torsion-free. Then $\bfK$ is 
uniquely ergodic. Moreover, elements of $G$ that are
different modulo $G_1$ are orthogonal with respect to 
the unique invariant probability measure.
\end{thm}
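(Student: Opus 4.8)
The statement has two parts: unique ergodicity, and orthogonality of quasi-eigenvectors that differ modulo $G_1$. I would derive both directly from Lemma \ref{tqds.l.asymp} together with the quasi-discrete spectrum hypothesis.

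For unique ergodicity, the plan is to invoke the standard criterion (cf.\ \cite{EFHN}) that a topological system $\bfK$ is uniquely ergodic if and only if for every $f \in \Ce(K)$ the Ces\`aro averages $\frac{1}{N}\sum_{j=0}^{N-1} T^j f$ converge uniformly (equivalently, pointwise with a constant limit) to a constant. First I would observe that $G \setminus G_1$ is handled by Lemma \ref{tqds.l.asymp}: the averages converge to $0$ pointwise. For $f \in G_1$, the function $f$ is a constant times $\car$, so $T^j f = f$ and the average is $f$ itself, trivially constant. Thus the Ces\`aro averages converge pointwise to a constant on the group $G$ of quasi-eigenvectors, namely to $0$ on $G \setminus G_1$ and to the function itself on $G_1$. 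Since the averaging operators $A_N := \frac{1}{N}\sum_{j=0}^{N-1} T^j$ are contractions on $\Ce(K)$ (uniformly bounded), and since $\cspan(G) = \Ce(K)$ by the quasi-discrete spectrum hypothesis, a standard density/$3\varepsilon$-argument extends pointwise convergence from $G$ to all of $\Ce(K)$. The limit functional $f \mapsto \lim_N A_N f(x)$ is then a well-defined, positive, normalized, rotation-invariant linear functional independent of $x$, which is exactly the unique invariant measure.

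For the orthogonality statement, let $\haar$ denote the unique invariant probability measure just produced. Given $f, g \in G$ that differ modulo $G_1$, I would consider the product $h := f \conj{g}$. Since $G$ is a group and $G_1 = \ker(\Lambda)$, one computes $\Lambda h = (\Lambda f)(\conj{\Lambda g})$, and $f, g$ differing modulo $G_1$ means precisely $h = f\conj{g} \notin G_1$, i.e.\ $h \in G \setminus G_1$. The inner product in $\Ell{2}(\haar)$ is $\langle f, g\rangle = \int f \conj{g} \,\ud\haar = \int h \,\ud\haar$. But $\haar$ is invariant, so $\int h \,\ud\haar = \lim_N \int A_N h \,\ud\haar$, and by the already-established convergence $A_N h \to 0$ pointwise (with uniform boundedness, giving convergence of the integrals by dominated convergence, or directly since $\int h\,\ud\haar = \int A_N h\,\ud\haar$ for every $N$ by invariance and the limit is $0$). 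Hence $\langle f, g\rangle = 0$.

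The main obstacle, such as it is, lies in the unique ergodicity argument rather than the orthogonality: specifically, upgrading the pointwise convergence of $A_N f(x)$ from the generating set $G$ to all continuous functions, and checking that the limit is genuinely independent of $x$ and defines a constant. The density argument is routine given uniform boundedness of the $A_N$, but one must be slightly careful that the limit constant may a priori depend on $x$; here it does not, because on $G\setminus G_1$ the limit is the constant $0$ and on $G_1$ it is a genuine constant function, so the limit functional is $x$-independent on a dense set and extends to an $x$-independent functional by the contraction estimate. Once unique ergodicity is in hand, the orthogonality is an essentially immediate consequence of invariance of $\haar$ combined with the vanishing of the Ces\`aro means.
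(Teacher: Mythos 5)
Your proposal is correct and is precisely the argument the paper intends: the paper offers no written proof beyond ``the lemma yields immediately the following theorem,'' and your filling-in --- Ces\`aro averages vanish on $G\setminus G_1$ by Lemma \ref{tqds.l.asymp}, are constant on $G_1$, extend by density ($\cspan(G)=\Ce(K)$) and uniform boundedness of the averaging operators to an $x$-independent invariant functional, whence unique ergodicity; then $f\conj{g}\in G\setminus G_1$ gives orthogonality by invariance and dominated convergence --- is exactly the standard route the authors have in mind.
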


Note that a uniquely ergodic system has a unique minimal subsystem
(as every minimal subsystem is the support of an invariant measure, 
see \cite[Chapter 10]{EFHN}). Hence, we shall suppose in the following that 
$\bfK$ is {\em minimal}.

\begin{lemma}\label{qds.l.totmin}
Let $\bfK$ be a minimal topological system with quasi-discrete spectrum. 
Then its  group $H_1$ of unimodular eigenvalues is torsion-free if and only 
if $\bfK$ is totally minimal. 
\end{lemma}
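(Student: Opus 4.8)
The plan is to treat the two implications separately, the reverse one being the more substantial.

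\emph{Totally minimal implies $H_1$ torsion-free.} I would argue by contraposition. Suppose $H_1$ contains a torsion element $\lambda\neq\car$, say of order $d\ge 2$. By the definition $H_1 = \Lambda(G_2)$ there is a unimodular eigenfunction $f\in G_2$ with $Tf = \lambda f$; since $\lambda\neq\car$ the function $f$ is nonconstant, and since $T = T_\psi$ is the Koopman operator, $f\after\psi = \lambda f$. Raising to the $d$-th power gives $f\after\psi^d = \lambda^d f = f$, so $f$ is a nonconstant continuous $\psi^d$-invariant function. Any level set $f^{-1}(\{c\})$ with $c$ in the range of $f$ is then a nonempty, proper, closed, $\psi^d$-invariant subset of $K$, so $(K;\psi^d)$ is not minimal and $\bfK$ is not totally minimal.

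\emph{$H_1$ torsion-free implies totally minimal.} By Lemma \ref{defs.l.torsion-free}, torsion-freeness of $H_1$ is equivalent to that of the whole group $H$ of quasi-eigenvalues. Fix $n\ge 1$ and write $\bfK^{(n)} := (K;\psi^n)$, whose Koopman operator is $T^n$. Every quasi-eigenvector of $\bfK$ is one of $\bfK^{(n)}$ (e.g.\ $f\in G_2$ with $Tf = \lambda f$ satisfies $T^n f = \lambda^n f$), so $G(\bfK)\subseteq G(\bfK^{(n)})$ and $\bfK^{(n)}$ again has quasi-discrete spectrum. I then plan to show that $\bfK^{(n)}$ is uniquely ergodic with full-support invariant measure, which forces $(K;\psi^n)$ to be minimal.

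For unique ergodicity I would rerun the computation behind Lemma \ref{tqds.l.asymp}, but along the subsequence $\psi^{nj}$. For $f\in G_{k+1}\setminus G_k$ that proof gives $(T^m f)(x) = f(x)\,\ue^{2\upi\ui p_x(m)}$, where $p_x$ is a real polynomial of degree $k$ with leading coefficient $\theta_k(x)/k!$ irrational. Substituting $m = nj$ yields $(T^{nj}f)(x) = f(x)\,\ue^{2\upi\ui p_x(nj)}$, and $j\mapsto p_x(nj)$ is again a degree-$k$ polynomial whose leading coefficient $n^k\theta_k(x)/k!$ is still irrational, as $n^k\in\N$. Weyl's equidistribution theorem then gives $\frac1N\sum_{j=0}^{N-1}(T^{nj}f)(x)\to 0$ for every $x$. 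If $\nu$ is any $\psi^n$-invariant probability measure, integrating these bounded Cesàro averages and invoking dominated convergence shows $\int f\,\ud\nu = 0$ for $f\in G\setminus G_1$, while $\int f\,\ud\nu = \iota(f)$ for $f\in G_1$; since $\cspan(G) = \Ce(K)$ this determines $\nu$ uniquely, so $\bfK^{(n)}$ is uniquely ergodic. The invariant measure $\mu$ of the minimal system $\bfK$ has full support (its support is a nonempty closed invariant set), and $\mu$ is also $\psi^n$-invariant, hence equals the unique $\psi^n$-invariant measure. A uniquely ergodic system whose invariant measure has full support is minimal, so $(K;\psi^n)$ is minimal for every $n$, that is, $\bfK$ is totally minimal.

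The main obstacle is this forward direction: one must produce unique ergodicity of each power $\psi^n$ and not merely of $\psi$, the crucial point being that passing from $p_x(m)$ to $p_x(nj)$ preserves irrationality of the leading coefficient because $n^k$ is a nonzero integer. The bridge from unique ergodicity back to minimality, via full support of $\mu$ and the minimality of the original system, is then the final routine step.
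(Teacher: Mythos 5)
Your proof is correct, and for the substantial direction it takes a genuinely different route from the paper's. The forward implication is essentially the paper's argument (total minimality forces every $\fix(T^m)$ to be trivial, so no torsion eigenvalue can exist); you merely spell out the contrapositive via level sets of an eigenfunction. For the converse, the paper instead applies its unique ergodicity theorem (Theorem \ref{tqds.t.unierg}) to the power system $(K;\psi^m)$: it checks that $\fix(T^m)$ is one-dimensional (a nonconstant $T^m$-fixed function would yield a torsion eigenvalue of $T$), that $(K;\psi^m)$ inherits quasi-discrete spectrum via formula \eqref{defs.e.T-binom}, and that its eigenvalue group is again torsion-free, and only then concludes unique ergodicity; minimality follows from full support of $\mu$ exactly as in your last step. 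You bypass all of these verifications for the power system and prove its unique ergodicity directly, by rerunning the Weyl computation of Lemma \ref{tqds.l.asymp} along the arithmetic progression $m=nj$: the key point, correctly identified, is that the leading coefficient $n^k\theta_k(x)/k!$ remains irrational, so the Ces\`aro averages of $T^{nj}f$ vanish pointwise for $f\in G\setminus G_1$, and any $\psi^n$-invariant measure is then pinned down on the dense subspace $\spann(G)$. What your route buys is self-containedness: you need only the density of $\spann G(\bfK)$ in $\Ce(K)$, not any structural statement about $(K;\psi^n)$ itself --- indeed your claim $G(\bfK)\subseteq G(\bfK^{(n)})$ (justified only for eigenfunctions; the general case needs an induction with \eqref{defs.e.T-binom}, as the paper indicates) is never actually used and could be deleted. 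What it costs is repeating the equidistribution argument rather than quoting Theorem \ref{tqds.t.unierg} as a black box, which is the economy the paper's proof is designed around.
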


\begin{proof}
If $\bfK$ is any totally minimal topological system, 
then every power $T^m$ of its Koopman
operator has one-dimensional fixed space. Hence, the group of unimodular
eigenvalues is torsion-free.

Conversely, let $\bfK= (K; \psi)$ be a minimal 
system with quasi-discrete spectrum, such that 
$H_1$ is torsion-free. By Theorem \ref{tqds.t.unierg}, $\bfK$ has a unique
invariant probability measure $\mu$, say, which has full support. 
Now let, as above, be $T$ the Koopman operator on $\Ce(K)$ of $\bfK$, and 
fix $m \in \N$. Any non-constant function in $\fix(T^m)$ would lead
to $T$ having an unimodular eigenvalue of order $m$, which is excluded. 
Hence, also $\fix(T^m)$ is one-dimensional. Moreover, it is easy
to see from formula \eqref{defs.e.T-binom} that any quasi-eigenfunction 
for $T$ is also a quasi-eigenfunction for $T^m$. It follows that 
also the system $(K; \psi^m)$ has quasi-discrete spectrum. 
The corresponding group of quasi-eigenvalues is a subgroup of $H_1$, hence
torsion-free. By Theorem \ref{tqds.t.unierg}, $(K, \psi^m)$ is uniquely 
ergodic, and  since $\mu$ is $\psi^m$-invariant and has full support, 
it follows that $(K; \psi^m)$ is minimal.
\end{proof}

In the next step we show that a {\em totally minimal} 
topological system $\bfK$ of quasi-discrete spectrum  
is isomorphic to a specific affine automorphism system on a 
compact monothetic group.

\begin{thm}[Representation]\label{tqds.t.rep}
Let $\bfK= (K;\psi)$ be a totally minimal topological system with 
quasi-discrete spectrum and \signature\  $(H, \Lambda, \iota)$. Then 
$\bfK$ is isomorphic to the affine automorphism system $(H^*; \Phi^*, \eta)$, 
where $\Phi(h) = h \Lambda h$ for $h \in H$, and $\eta$ denotes
any homomorphic  extension of $\iota: H_1 \to \torus$ to all of $H$. 
\end{thm}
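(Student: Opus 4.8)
The plan is to construct an explicit topological conjugacy $\Theta\colon K \to H^*$ and to read off the correct inhomogeneity from the construction, proving the theorem for one convenient choice of $\eta$ and then invoking Theorem~\ref{defs.t.sig-qds-conj} for the rest. Fix a base point $x_0 \in K$. Since $\Lambda\colon \fact{G}{G_1} \to H$ is an isomorphism, for each quasi-eigenvalue $h \in H$ there is a quasi-eigenvector $g_h \in G$, unique up to a unimodular constant, with $\Lambda g_h = h$. I would then define
\[
 \Theta(x)(h) := \frac{g_h(x)}{g_h(x_0)} \qquad (x\in K,\ h\in H),
\]
which is independent of the choice of representative $g_h$ and, because $g_{hh'} \equiv g_h g_{h'} \mod{G_1}$, is a homomorphism in $h$; hence $\Theta(x) \in H^*$ (recall $H$ is discrete). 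Coordinatewise continuity of $x \mapsto \Theta(x)(h)$ makes $\Theta$ continuous into $H^*$ with its pointwise topology. Simultaneously I would set $\eta := \mathrm{ev}_{x_0}\colon H \to \torus$, $\eta(h) := h(x_0)$; this is a homomorphism extending $\iota$, since every $h \in H_1$ is a constant function whose value is $\iota(h)$.

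Next I would verify equivariance. Using $T g_h = g_h\cdot \Lambda g_h = g_h\cdot h$ one computes $\Theta(\psi x)(h) = \Theta(x)(h)\, h(x)$. On the other hand, writing $\psi_{H^*}(\chi) := \Phi^*(\chi)\,\eta$ with $\Phi(h)=h\Lambda h$ gives $\psi_{H^*}(\Theta x)(h) = \Theta(x)(h)\,\Theta(x)(\Lambda h)\,\eta(h)$. The two expressions agree precisely because $h$ and $g_{\Lambda h}$ lie in the same $G_1$-coset, so that $\Theta(x)(\Lambda h) = h(x)/h(x_0) = h(x)/\eta(h)$; this is exactly the point at which the choice $\eta = \mathrm{ev}_{x_0}$ is forced. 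Thus $\Theta\after\psi = \psi_{H^*}\after\Theta$. Injectivity of $\Theta$ is then easy: $\Theta(x)=\Theta(y)$ forces $g_h(x)=g_h(y)$ for all $h$, hence $g(x)=g(y)$ for every $g\in G$ (each such $g$ being a unimodular multiple of some $g_h$), and since $\spann(G)$ is dense in $\Ce(K)$ this gives $x=y$.

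The main obstacle is surjectivity of $\Theta$, which I would establish through invariant measures rather than by a direct argument. By Lemma~\ref{qds.l.totmin} total minimality makes $H_1$ torsion-free, so Theorem~\ref{tqds.t.unierg} applies: $\bfK$ is uniquely ergodic with invariant measure $\mu$, and quasi-eigenvectors lying in distinct $G_1$-cosets are $\mu$-orthogonal. Consider the pushforward $\Theta_*\mu$, a probability measure on $H^*$. Its Fourier coefficient at $h\in H$ is
\[
 \int_{H^*}\! \chi(h)\,\ud(\Theta_*\mu)(\chi) = \int_K \Theta(x)(h)\,\ud\mu(x) = \frac{1}{g_h(x_0)}\int_K g_h\,\ud\mu,
\]
which equals $1$ for $h=\car$ and $0$ otherwise, since $g_h$ is orthogonal to the constants whenever $h\neq\car$. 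These are exactly the Fourier coefficients of the normalized Haar measure $\haar$ on $H^*$, so $\Theta_*\mu = \haar$. As $\haar$ has full support and $\Theta_*\mu$ is carried by the compact set $\Theta(K)$, we conclude $\Theta(K)=H^*$. A continuous bijection between compact Hausdorff spaces is a homeomorphism, so $\Theta$ is a topological conjugacy of $\bfK$ onto $(H^*;\Phi^*,\eta)$ for this particular $\eta$. Finally, for an arbitrary homomorphic extension $\eta'$ of $\iota$, Theorem~\ref{defs.t.sig-qds-conj} shows that $(H^*;\Phi^*,\eta')$ and $(H^*;\Phi^*,\eta)$ are conjugate, since both extensions agree with $\iota$ on $H_1$; composing conjugacies finishes the proof.
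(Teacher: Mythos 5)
Your proposal is correct, and most of it is the paper's own proof transported along the isomorphism $\Lambda\colon \fact{G}{G_1} \to H$: your $\Theta$ is exactly the paper's evaluation map $\delta\colon K \to (\fact{G}{G_1})^*$, $\delta_x(fG_1) = f(x)/f(x_0)$; your $\eta = \mathrm{ev}_{x_0}$ is the paper's $\eta(fG_1) = (\Lambda f)(x_0)$; the equivariance computation and the injectivity argument are the same; and both proofs reduce an arbitrary extension of $\iota$ to this one particular $\eta$ via Theorem \ref{defs.t.sig-qds-conj}. The one genuinely different step is surjectivity. The paper dualizes: it shows that the induced Koopman operator $\Delta\colon \Ce(\Gamma) \to \Ce(K)$, $\Gamma = (\fact{G}{G_1})^*$, is injective, by observing that $\Delta$ is an $\Ell{2}$-isometry on $\spann(\Gamma^*)$ (this is where the orthogonality of Theorem \ref{tqds.t.unierg} enters), extending the estimate $\norm{f}_{\Ell{2}(\Gamma;\haar)} \le \norm{\Delta f}_{\Ce(K)}$ to all of $\Ce(\Gamma)$ by approximation, and invoking full support of the Haar measure; injectivity of $\Delta$ then forces $\delta(K) = \Gamma$. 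You instead push the unique invariant measure forward and identify $\Theta_*\mu$ with $\haar$ by computing Fourier coefficients --- the same orthogonality statement from Theorem \ref{tqds.t.unierg}, used on the measure side rather than the operator side --- and conclude from $H^* = \supp\haar \subseteq \Theta(K)$. Your variant is slightly more economical: it needs no approximation argument and no Urysohn-type step to pass from injectivity of $\Delta$ to surjectivity of $\delta$; the paper's version, on the other hand, stays within its operator-theoretic framework and exhibits the embedding $\Delta$ explicitly. Your appeals to Lemma \ref{qds.l.totmin} (to get $H_1$ torsion-free from total minimality) and to quasi-discrete spectrum (for injectivity of $\Theta$) occur exactly where the paper needs them, so the proof is complete as written.
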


\begin{proof}
By Theorem \ref{defs.t.sig-qds-conj} 
it suffices to find {\em one} extension
$\eta \in H^*$ of $\iota$ such 
that $\bfK$ is isomorphic to $(H^*; \Phi^*, \eta)$. 
The proof will be  given in several steps and employs the
isomorphy (via $\Lambda$) of $\fact{G}{G_1}$ and $H$. 

Fix $x_0 \in K$ and consider for each $x\in K$
the multiplicative functional 
\[ \delta_x: \Ce(K) \to \C,\qquad \delta_x(f) = \frac{f(x)}{f(x_0)}.
\]
This restricts to a homomorphism $\delta_x: G \to \torus$ that factors through $G_1$, 
hence induces a homomorphism $\delta_x : \fact{G}{G_1} \to \torus$, i.e.,
$\delta_x\in (\fact{G}{G_1})^*$. We claim that the mapping
\[ \delta: K \to (\fact{G}{G_1})^*, \qquad x \mapsto \delta_x
\]
is a homeomorphism. 

\prfskip
Since $\Gamma := (\fact{G}{G_1})^*$ carries the topology of pointwise
convergence on $\fact{G}{G_1}$, $\delta$ is continuous. Since $G$
separates the points of $K$, $\delta$ is injective. For the surjectivity it suffices
to show that the
induced Koopman operator
\[ \Delta: \Ce(\Gamma) \to \Ce(K), \qquad (\Delta f)(x) := f( \delta_x).
\]
is injective. To this end, note that 
$\{ gG_1 \st g \in G\} = \fact{G}{G_1} \cong \Gamma^*$ and, with this identification 
$\Delta(gG_1) = \frac{g}{g(x_0)}$ for $g\in G$. Moreover, 
by Theorem \ref {tqds.t.unierg}, if $gG_1 \neq hG_1$ then 
$g \perp h$ in $\Ell{2}(K;\mu)$. Hence, $\Delta : \spann(\Gamma^*) \to \Ce(K)$ 
is an isometry for the  $\Ell{2}$-norms, i.e.,
\[ \norm{f}_{\Ell{2}(\Gamma;\haar)} = 
\norm{\Delta f}_{\Ell{2}(K,\mu)} \le \norm{\Delta f}_{ \Ce(K)} 
\qquad( f\in \spann(\Gamma^*)).
\]
Since the $\Ell{2}$-norm on $\Ce(\Gamma)$ is weaker than the uniform norm
and $\spann(\Gamma^*)$ is dense in $\Ce(\Gamma)$, it follows by approximation that
\[ \norm{f}_{\Ell{2}(\Gamma;\haar)} \le \norm{\Delta f}_{ \Ce(K)}
\]
for {\em all} $f\in \Ce(\Gamma)$. 
And, since the $\Ell{2}$-norm is really a norm on $\Ce(\Gamma)$, 
i.e., the Haar measure has full support,  $\Delta$ is injective. 

\prfskip
Finally,  we can --- via the mapping $\delta$ ---  
carry over the action $\psi$ from $K$ to $\Gamma$. 
For $x\in K$,
\begin{align*}
\delta_{\psi(x)}(f) &  = \frac{f(\psi(x))}{f(x_0)}
= \frac{(Tf)(x)}{f(x_0)} = \frac{f(x)}{f(x_0)}  (\Lambda f)(x)
= \frac{f(x)}{f(x_0)}  \frac{(\Lambda f)(x)}{(\Lambda f)(x_0)} 
(\Lambda f)(x_0)
\\ & = \delta_x(f) \cdot \delta_x(\Lambda f) \cdot (\Lambda f)(x_0)
= (\delta_x \Lambda^*\delta_x)(f) \cdot (\Lambda f)(x_0).
\end{align*}
This means that
\[ \delta_{\psi(x)}  = \Phi^*(\delta_x) \eta
\]
where $\eta(fG_1) = (\Lambda f)(x_0)$ for $f\in G$. Note that 
$\eta$ restricts on $\fact{G_2}{G_1}$ to the canonical embedding of 
$\fact{G_2}{G_1} \to \torus$. Hence, the theorem is completely proved.  
\end{proof}

\begin{cor}[Isomorphism]
Two minimal topological systems with quasi-discrete spectrum
and torsion-free group of unimodular eigenvalues are
conjugate if and only if  their signatures are isomorphic.
\end{cor}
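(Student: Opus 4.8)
The plan is to deduce everything from the Representation Theorem \ref{tqds.t.rep}, whose hypotheses the present assumptions are designed to meet. Indeed, by Lemma \ref{qds.l.totmin} a minimal system with quasi-discrete spectrum has torsion-free group $H_1$ of unimodular eigenvalues exactly when it is totally minimal (and, by Lemma \ref{defs.l.torsion-free}, exactly when its full group of quasi-eigenvalues is torsion-free). So I may assume both systems $\bfK = (K;\psi)$ and $\bfK' = (K';\psi')$ to be totally minimal with quasi-discrete spectrum, with signatures $\sig(\bfK) = (H,\Lambda,\iota)$ and $\sig(\bfK') = (H',\Lambda',\iota')$.

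The forward implication is the routine one. A conjugacy $\Theta\colon K' \to K$ (a homeomorphism with $\psi\circ\Theta = \Theta\circ\psi'$) induces the algebra isomorphism $\Theta^*\colon \Ce(K)\to\Ce(K')$, $f\mapsto f\circ\Theta$, which intertwines the Koopman operators and restricts to a group isomorphism $\Ce(K;\torus)\to\Ce(K';\torus)$ commuting with the derived homomorphisms. Hence $\Theta^*$ carries $G(\bfK)$ isomorphically onto $G(\bfK')$, preserving the filtration by the subgroups $G_n$ and fixing the unimodular constants (so respecting $\iota$). This is an isomorphism of the signatures $(G,\Lambda,\iota)$, and therefore of the derived signatures, giving $\sig(\bfK)\cong\sig(\bfK')$.

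For the converse, suppose $\alpha\colon H\to H'$ is an isomorphism of signatures. By Theorem \ref{tqds.t.rep} each system is isomorphic to an associated affine automorphism system, say $\bfK\cong(H^*;\Phi^*,\eta)$ and $\bfK'\cong((H')^*;(\Phi')^*,\eta')$, where $\Phi(h)=h\,\Lambda h$ and $\eta,\eta'$ are homomorphic extensions of $\iota,\iota'$. Since $\alpha$ commutes with $\Lambda$ it commutes with $\Phi$, so the dual map $\alpha^*\colon (H')^*\to H^*$ is a topological group isomorphism with $\alpha^*\circ(\Phi')^* = \Phi^*\circ\alpha^*$. A direct computation on the affine maps then shows that $\alpha^*$ is a conjugacy from $((H')^*;(\Phi')^*,\eta')$ onto $(H^*;\Phi^*,\alpha^*\eta')$.

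It remains to absorb the discrepancy between the translation parts $\alpha^*\eta'$ and $\eta$, and this is where I would invoke Theorem \ref{defs.t.sig-qds-conj}: the systems $(H^*;\Phi^*,\alpha^*\eta')$ and $(H^*;\Phi^*,\eta)$ are conjugate as soon as $\alpha^*\eta'$ and $\eta$ agree on $H_1 = \ker\Lambda$. Now for $h\in H_1$ one has $\alpha(h)\in H'_1$ (as $\alpha$ commutes with $\Lambda$), whence $(\alpha^*\eta')(h)=\eta'(\alpha(h))=\iota'(\alpha(h))=\iota(h)=\eta(h)$, using that $\eta,\eta'$ extend $\iota,\iota'$ and that $\alpha$ is a signature morphism. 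Chaining the three conjugacies yields $\bfK'\cong\bfK$, and together with the forward direction this establishes the claimed equivalence. The only genuinely delicate point is this last matching of the translation parts; once the systems have been replaced by their affine models, everything else is formal group-duality bookkeeping.
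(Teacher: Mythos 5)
Your proof is correct and follows essentially the same route as the paper: the forward direction is the routine transport of signatures along a conjugacy, and the converse combines Theorem \ref{tqds.t.rep} with the fact that an isomorphism of signatures yields a conjugacy of the associated affine models. The careful matching of the translation parts via Theorem \ref{defs.t.sig-qds-conj} that you carry out is exactly what the paper compresses into the phrase ``induces an isomorphism of associated affine automorphism systems,'' so you have simply supplied the details the authors left implicit.
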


\begin{proof}
It is clear that two conjugate systems have isomorphic signatures. 
Conversely, any isomorphism
of the associated signatures induces an isomorphism of associated
affine automorphism systems, and by Theorem \ref{tqds.t.rep} this leads
to an isomorphism of the original systems. 
\end{proof}

In order to complete the picture, only the following result is missing.

\begin{thm}[Realization]\label{tqds.t.real}
Let $(H, \Lambda, \iota)$ be a signature and consider 
an  
 associated affine automorphism system $\bfK := (H^*; \Phi^*, \eta)$. 
If $H$ is torsion-free
then $\bfK$ is totally minimal
and has quasi-discrete spectrum with signature (isomorphic to) 
$(H, \Lambda, \iota)$. 
\end{thm}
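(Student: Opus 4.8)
The plan is to realize $\bfK$ explicitly on the compact dual group $\Gamma := H^*$ and to read everything off from the action of the Koopman operator $T$ on characters. By Pontryagin duality the character group of $\Gamma$ is $H$ itself, and each $h\in H$, regarded as a continuous unimodular function on $\Gamma$, satisfies
\[ (Th)(\chi) = h(\psi\chi) = (\Phi^*\chi)(h)\,\eta(h) = \chi(\Phi h)\,\eta(h) = \eta(h)\,(h\cdot\Lambda h)(\chi) \qquad (\chi\in\Gamma), \]
since $\Phi(h) = h\,\Lambda h$. Thus the derived homomorphism of $\bfK$ acts by $\Lambda_\bfK h = Th\cdot\conj h = \eta(h)\,\Lambda h$, and as the constants lie in $\ker\Lambda_\bfK$ one gets by induction that $\Lambda_\bfK^k h$ is a constant multiple of $\Lambda^k h$. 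Hence each character $h\in H_n$ lies in $G_{n+1}(\bfK)$, so $H\subseteq G(\bfK)$; since the linear span of the characters is dense in $\Ce(\Gamma)$ by Stone--Weierstrass, $\bfK$ has quasi-discrete spectrum.

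Next I would establish total minimality. Working in $\Ell{2}(\Gamma;\haar)$, where the characters form an orthonormal basis, $T^m$ acts as a weighted permutation $h\mapsto(\text{unimodular scalar})\cdot\Phi^m(h)$, so a $T^m$-fixed vector has Fourier coefficients of constant modulus along each $\Phi^m$-orbit and, by square-summability, is supported on the finite ones. The key algebraic point is that $H$ torsion-free and $\Lambda$ quasi-nilpotent force every finite $\Phi$-orbit to be trivial: writing additively, if $\Phi^p h = h$ with $h\neq 0$ and $n\ge 2$ is minimal with $\Lambda^n h = 0$, then applying $\Lambda^{n-2}$ to $(\Phi^p-\mathrm{I})h = \sum_{j\ge 1}\binom pj\Lambda^j h = 0$ collapses everything to $p\,\Lambda^{n-1}h = 0$, whence $\Lambda^{n-1}h = 0$, contradicting minimality; so $n\le 1$, i.e. $h\in H_1$, in which case $T^m h = \iota(h)^m h$ is fixed only if $\iota(h)^m = 1$, forcing $h = 0$. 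Therefore $\fix(T^m)$ is one-dimensional for every $m$. Since $(\Gamma;\psi^m)$ also has quasi-discrete spectrum (any quasi-eigenvector of $T$ is one for $T^m$ by \eqref{defs.e.T-binom}) and a torsion-free group of quasi-eigenvalues, Theorem \ref{tqds.t.unierg} makes it uniquely ergodic; as the $\psi^m$-invariant Haar measure has full support, $(\Gamma;\psi^m)$ is minimal, and so $\bfK$ is totally minimal.

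Finally I would identify the signature via the homomorphism $\beta: H \to \fact{G(\bfK)}{G_1(\bfK)}$, $h\mapsto h\,G_1(\bfK)$, where $G_1(\bfK)$ is the group of constants. It is injective because a character is constant only if it is trivial, and it intertwines the two copies of $\Lambda$ since $\Lambda_\bfK h = \eta(h)\,\Lambda h\equiv\Lambda h$ modulo constants. Surjectivity is the crux: if some quasi-eigenvector $f\in G(\bfK)$ had a class not realized by $\beta$, then by Theorem \ref{tqds.t.unierg} $f$ would be orthogonal to every character, contradicting completeness of the characters in $\Ell{2}(\Gamma;\haar)$; hence $G(\bfK) = \torus\cdot H$ and $\beta$ is onto. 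A short check on $H_1$, using $\tilde\iota(\beta h) = \iota_\bfK(\Lambda_\bfK h) = \eta(h) = \iota(h)$, shows $\beta$ respects the embeddings into $\torus$, so it is an isomorphism onto the derived signature $(\fact{G(\bfK)}{G_1(\bfK)},\Lambda_\bfK,\tilde\iota)\cong\sig(\bfK)$, giving $\sig(\bfK)\cong(H,\Lambda,\iota)$.

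The two steps demanding genuine care are the triviality of finite $\Phi^m$-orbits, where torsion-freeness is indispensable, and the surjectivity of $\beta$, i.e. the absence of quasi-eigenvectors beyond constant multiples of characters; both hinge on combining the orthogonality of Theorem \ref{tqds.t.unierg} with Pontryagin duality. By Theorem \ref{defs.t.sig-qds-conj} the choice of the extension $\eta$ of $\iota$ is irrelevant up to conjugacy, so no separate argument is needed there.
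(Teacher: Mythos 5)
Your proof is correct, and its overall skeleton is the same as the paper's: compute $Th = \eta(h)\,\Phi h$, hence $\Lambda_\bfK h = \eta(h)\Lambda h$, to get $H \subseteq G(\bfK)$ and quasi-discrete spectrum; control fixed/eigenspaces by Fourier expansion in the character basis together with the infinitude of $\Phi$-orbits outside $H_1$; deduce (total) minimality from Theorem \ref{tqds.t.unierg} and the full support of Haar measure; and identify the signature via $h \mapsto h\,G_1(\bfK)$, with surjectivity coming from orthogonality (Theorem \ref{tqds.t.unierg}) plus completeness of the characters. You differ from the paper in two details of the middle step, both legitimate. First, where the paper computes the full point spectrum of $T$ itself (showing $\sigma_p(T) = \eta(H_1)$ with one-dimensional eigenspaces) and then invokes Lemma \ref{qds.l.totmin} to upgrade minimality to total minimality, you compute $\fix(T^m)$ for \emph{every} $m$ directly and apply Theorem \ref{tqds.t.unierg} to each power system, in effect re-proving Lemma \ref{qds.l.totmin} inline; this is slightly more self-contained but gives less information (the paper's route also yields the spectrum). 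Second, instead of citing the last assertion of Lemma \ref{defs.l.torsion-free} for the infinitude of $\{\Phi^n h\}$ when $h \notin H_1$, you give a clean ad hoc binomial argument ($p\,\Lambda^{n-1}h = 0$ plus torsion-freeness), which is fine. One point you leave implicit: applying Theorem \ref{tqds.t.unierg} to $(\Gamma;\psi^m)$ requires that its group of quasi-eigenvalues be torsion-free; this does follow from what you proved (a root-of-unity eigenvalue of $T^m$ would produce a non-constant element of some $\fix(T^{mk})$, and by Lemma \ref{defs.l.torsion-free} torsion-freeness of the eigenvalue group suffices), but the sentence deserves to be spelled out.
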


\begin{proof}
Denote by  $K := H^*$ and $\vphi: K \to K$, $\vphi  = \Phi^* \cdot \eta$. 
We denote, as usual, by $T$ the Koopman operator on $\Ce(K)$, and define
$\Lambda_T f := \conj{f} Tf$ for $f\in \Ce(K;\torus)$. 
The associated subgroups of $\Ce(K;\torus)$ are
\[ G_n = \ker(\Lambda_T^{n})\quad (n \in \N_0) \quad
\text{and}\quad G = \bigcup_{n\in \N} G_n.
\]
We consider $H$ as a subset of $\Ce(K;\torus)$. Define, 
\[ \Gamma_n := \torus \cdot H_{n{-}1} \quad(n \in \N) \quad
\text{and}\quad
\quad \Gamma = \bigcup_{n\in \N} \Gamma_n.
\]
A straightforward computation yields
\[  Th = h \circ \vphi = \eta(h) \cdot h \cdot \Lambda h = \eta(h) \Phi h,
\]
whence it follows that $\Lambda_Th = \eta(h) \Lambda h$ for $h\in H$. Consequently,
$\Gamma_n \subseteq G_n$ for all $n \in \N$. 

Let us compute the eigenspaces of $T$. Clearly, each $h\in H_1$ is
an unimodular eigenvector of $T$ with eigenvalue $\eta(h)$. Conversely,
suppose that there is $f\in \Ce(K)$ with $Tf = \lambda f$. 
Since $H$ is an  orthonormal basis of $\Ell{2}(K,\haar)$, the function
$f$ can be uniquely
written as an $\Ell{2}(K,\haar)$-convergent sum
\[ f = \sum_{h\in H} \lambda_h h.
\]
Applying $T$ leads to 
\[ \lambda \sum_{h\in H} \lambda_h h = \lambda f = Tf = \sum_{h\in H} \lambda_h \eta(h) \Phi h
\]
which, by comparison of coefficients, is equivalent to 
$\lambda \lambda_{\Phi h} = \lambda_h \eta(h)$ for all $h \in H$. 
In particular, 
\[ \abs{\lambda_{\Phi h}} = \abs{\lambda_h} \quad \text{for all $h\in H$}.
\]
If $h \notin H_1$, by Lemma \ref{defs.l.torsion-free} 
the set $\{ \Phi^n h \st n \ge 0\}$
is infinite. Hence, $\abs{\lambda_h} = 0$ if $h \notin H_1$. Moreover
$\lambda \lambda_h = \eta(h) \lambda_h$ for $h\in H_1$, 
which is equivalent with  $\lambda_h = 0$ or $\eta(h) = \lambda$. 
Since $\eta$ is a monomorphism on $H_1$, we conclude that $\sigma_p(T) = 
\eta(H_1)$ and  each eigenspace is one-dimensional and spanned
by a function of $H_1$. 

It follows  that $\bfK$ has quasi-discrete spectrum and its group
of eigenvalues is torsion-free. By Theorem \ref{tqds.t.unierg}, 
$\bfK$ is uniquely
ergodic, and since the Haar measure is invariant and has full support, 
$\bfK$ is minimal. By Lemma \ref{qds.l.totmin}, it is totally minimal. 

Recall that $G_1 = \{ c\car\st c\in \torus\}$ and 
consider the homomorphism of groups
\[  \alpha: H \to \fact{G}{G_1}, \qquad \alpha(h) := hG_1.
\]
Then $\alpha$ is 
injective, and it is easy to see that $\alpha: (H, \Lambda,\iota)
\to (\fact{G}{G_1}, \Lambda_T, \tilde{ \iota}))$ is a
monomorphism of signatures.  

It remains to be shown that $\alpha$ is an isomorphism, i.e., surjective. 
To this end, suppose that $g\in G$ is such that $gG_1 \notin \alpha(H)$. 
Then, by Theorem \ref{tqds.t.unierg} again, 
$g \perp H$ in $\Ell{2}(K, \haar)$. But $H$ is an orthonormal
basis, and hence $g = 0$, which is a contradiction to $\abs{g} = \car$.
\end{proof}

\begin{rem}
Theorem \ref{tqds.t.real} is due to 
Hahn and Parry \cite[Theorem 4]{HahnParry1965}. Our presentation
is more detailed, for the sake of convenience. 
\end{rem}

\section{Measure-preserving systems with quasi-discrete spectrum (Abramov's theorem)}\label{s.mqds}

We now turn to the measure-preserving case. Again, we start with the 
representation theorem. 
 
Let $\bfX = (\prX; \vphi)$ be a totally ergodic measure-preserving system
with quasi-discrete spectrum. Its 
Koopman operator on $\Ell{1}(\prX)$ is denoted  by  $T$, the
homomorphism $\Lambda$ on the group $\Ell{0}(\prX;\torus)$ is given by 
$\Lambda  f := \conj{f} \cdot Tf$, and as before the subgroup $G$
is given by 
\[ G = \bigcup_{n\ge 0} G_n, \qquad G_n := \ker(\Lambda^{n}) \qquad (n\ge 0).
\]
Since the system is totally ergodic, 
$G_1 = \ker(\Lambda) = \fix(T) \cap \Ell{0}(\prX;\torus)  
= \torus\cdot \car$, and the group of eigenvalues $\Lambda(G_1) \cong 
\fact{G_2}{G_1}$ is torsion-free. That $\bfX$ has 
{\emdf quasi-discrete spectrum} 
means 
that the linear hull of $G$, $\spann G$, is dense in  $\Ell{2}(\prX)$. 

Consider now the closure
\[ A := \cl_{\Ell{\infty}} \spann G
\]
of $\spann G$ in $\Ell{\infty}$. Since $G$ is multiplicative and 
$T$-invariant, $A$ is a $T$-invariant, unital  $C^*$-subalgebra of 
$\Ell{\infty}(\prX)$. Hence, by an application of Gelfand's 
theorem, we can find a
topological system $(K, \mu; \psi)$ together with a
Markov isomorphism $\Psi: \Ell{1}(K,\mu) \to \Ell{1}(\prX)$ such that 
$T\Psi = \Psi T_\psi$ and $\Psi(\Ce(K)) = A$. 
(See \cite[Chapter 12]{EFHN} for details.)
In the following
we identify $\prX$ with $(K,\mu)$ and  $A$ with $\Ce(K)$, 
drop explicit reference to the mapping $\Psi$, and 
write  again $T$  for the Koopman operator on $\Ce(K)$ of the mapping
$\psi$.  With these identifications being made, we now
have $G \subseteq \Ce(K; \torus)$, and hence $\bfK := (K;\psi)$ is
a topological system with quasi-discrete spectrum. The signature
$(H, \Lambda, \iota)$ 
of this topological system is, by construction, the same as
the signature of the original measure-preserving system. Moreover,
since the measure $\mu$ on $K$ has full support (also by construction),
the system $\bfK$ is minimal (Theorem \ref{tqds.t.unierg}), hence even totally
minimal by Lemma \ref{qds.l.totmin}.

Now we can apply Theorem \ref{tqds.t.rep} to conclude that 
$\bfK$ is isomorphic to the affine automorphism system 
$(H^*; \Phi^*, \eta)$, where $\eta$ is any homomorphic extension 
of $\iota$ to $H$. By virtue of this isomorphism,
the measure $\mu$ on $K$ is mapped to an invariant measure on $H^*$, 
which, by unique ergodicity of the systems, must therefore coincide
with the Haar measure on $H^*$. The isomorphism  of topological systems
therefore extends to an isomorphism $\bfX \cong (H^*, \haar; \Phi^*, \eta)$
of measure-preserving systems. 

In effect, we have proved the following theorem, due to Abramov 
\cite[\S 4]{Abramov1962}.

\begin{thm}[Representation]\label{mqds.t.rep}
Let $\bfX= (\prX;\psi)$ be a totally ergodic measure-preserving system with 
quasi-discrete spectrum and \signature\  $(H, \Lambda, \iota)$. Then 
$\bfX$ is isomorphic to the affine automorphism system 
$(H^*,\haar; \Phi^*, \eta)$, 
where $\Phi(h) = h \Lambda h$ for $h \in H$, and $\eta$ denotes
any homomorphic  extension of $\iota: H_1 \to \torus$ to all of $H$. 
\end{thm}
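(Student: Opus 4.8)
The plan is to deduce the measure-preserving statement from the already-proved topological representation result (Theorem \ref{tqds.t.rep}) by manufacturing a good topological model through Gelfand's theorem, exactly in the spirit announced in the introduction. The point is that the group $G$ of unimodular quasi-eigenvectors carries all the relevant structure, and its closed linear span inside $\Ell{\infty}(\prX)$ is a commutative $C^*$-algebra on which the Koopman operator $T$ acts.

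First I would form $A := \cl_{\Ell{\infty}} \spann G$. Because $G$ is a multiplicative, conjugation-closed and $T$-invariant subset of the unit sphere of $\Ell{\infty}(\prX)$, the space $A$ is a unital, $T$-invariant $C^*$-subalgebra of $\Ell{\infty}(\prX)$. Applying the dynamical form of Gelfand's theorem (cf.\ \cite[Chapter 12]{EFHN}) produces a compact space $K$ with a probability measure $\mu$ of full support, a continuous map $\psi:K\to K$, and a Markov isomorphism $\Psi:\Ell{1}(K,\mu)\to\Ell{1}(\prX)$ intertwining the Koopman operators and satisfying $\Psi(\Ce(K))=A$. Identifying $\prX$ with $(K,\mu)$ along $\Psi$ turns $G$ into a subset of $\Ce(K;\torus)$ whose linear span is uniformly dense; hence $\bfK := (K;\psi)$ is a topological system with quasi-discrete spectrum whose signature, by construction, is exactly the prescribed signature $(H,\Lambda,\iota)$ of $\bfX$.

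Next I would verify the hypotheses of Theorem \ref{tqds.t.rep}. Ergodicity of $\bfX$ makes $\fix(T)$ one-dimensional, while total ergodicity forces the group $H_1$ of unimodular eigenvalues to be torsion-free, and by Lemma \ref{defs.l.torsion-free} the whole group $H$ is then torsion-free. Theorem \ref{tqds.t.unierg} shows $\bfK$ is uniquely ergodic; since $\mu$ is an invariant measure of full support it coincides with the unique invariant measure, so $\bfK$ is minimal, and Lemma \ref{qds.l.totmin} upgrades this to total minimality. Theorem \ref{tqds.t.rep} then supplies a topological conjugacy $\bfK\cong(H^*;\Phi^*,\eta)$ for any homomorphic extension $\eta$ of $\iota$ to $H$.

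Finally I would promote this conjugacy to an isomorphism of measure-preserving systems. The conjugacy pushes $\mu$ forward to a $\psi$-invariant Borel probability measure on $H^*$; but $(H^*;\Phi^*,\eta)$ is itself uniquely ergodic with Haar measure as its only invariant measure, so the pushforward equals $\haar$. Thus the same map realizes both a topological conjugacy and a measure isomorphism, giving $\bfX\cong(H^*,\haar;\Phi^*,\eta)$. The one genuinely delicate point sits in the Gelfand-model step: one must be sure that passing to the $C^*$-algebra $A$ and its $T$-action reproduces the signature $(H,\Lambda,\iota)$ without alteration, so that Theorem \ref{tqds.t.rep} returns the intended dual group $H^*$; once that identification is secured, the transport of the invariant measure and of the dynamics is routine.
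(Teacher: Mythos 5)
Your proposal is correct and follows essentially the same route as the paper's own proof: forming the $C^*$-algebra $A = \cl_{\Ell{\infty}}\spann G$, invoking Gelfand's theorem to obtain a topological model $\bfK=(K;\psi)$ with the same signature, verifying total minimality via Theorem~\ref{tqds.t.unierg} and Lemma~\ref{qds.l.totmin}, applying Theorem~\ref{tqds.t.rep}, and then transporting the measure to Haar measure by unique ergodicity. The ``delicate point'' you flag is handled in the paper exactly as you suggest: the identification $\Psi(\Ce(K))=A$ carries $G$ into $\Ce(K;\torus)$ verbatim, so the signature is preserved by construction.
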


As in the topological case, the representation theorem implies
readily the isomorphism theorem. The proof is completely analogous.

\begin{cor}[Isomorphism]
Two totally ergodic measure-preserving  systems with quasi-discrete spectrum
are isomorphic if and only if  their signatures are isomorphic.
\end{cor}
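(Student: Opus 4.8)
The plan is to deduce this isomorphism criterion from the Representation theorem (Theorem \ref{mqds.t.rep}) in exactly the same way as in the topological case, so that essentially no new analysis is required. Write $\bfX$ and $\bfY$ for the two systems, with signatures $(H,\Lambda,\iota)$ and $(\tilde H,\tilde\Lambda,\tilde\iota)$ respectively. The two implications are of quite different character: one is a routine transport of algebraic structure along an isomorphism, while the other is where the representation theorem does all the work.

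For the forward implication, suppose $S:\Ell{1}(\prY)\to\Ell{1}(\prX)$ is an intertwining Markov isomorphism. First I would observe that $S$ restricts to a group isomorphism $\Ell{0}(\prY;\torus)\to\Ell{0}(\prX;\torus)$ which, because $S$ is multiplicative and conjugation-preserving on bounded functions and satisfies $ST_\psi=T_\vphi S$, intertwines the two derived homomorphisms: $S\after\Lambda_\bfY=\Lambda_\bfX\after S$. Consequently $S$ maps $\ker(\Lambda_\bfY^{n})$ bijectively onto $\ker(\Lambda_\bfX^{n})$ for every $n$, hence carries $G(\bfY)$ onto $G(\bfX)$ and the group of quasi-eigenvalues $\tilde H$ onto $H$; since $S$ preserves the constant functions and $\iota$ is merely evaluation on the constants, the induced isomorphism is compatible with $\iota$ and $\tilde\iota$. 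This yields an isomorphism of signatures $\sig(\bfY)\to\sig(\bfX)$.

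For the converse, suppose $\alpha:(H,\Lambda,\iota)\to(\tilde H,\tilde\Lambda,\tilde\iota)$ is an isomorphism of signatures. By Theorem \ref{mqds.t.rep} it suffices to produce a measure-preserving conjugacy between the associated affine automorphism systems $(H^*,\haar;\Phi^*,\eta)$ and $(\tilde H^*,\haar;\tilde\Phi^*,\tilde\eta)$, for some chosen extensions $\eta,\tilde\eta$ of $\iota,\tilde\iota$. Since $\alpha$ intertwines $\Lambda$ and $\tilde\Lambda$, it also intertwines $\Phi$ and $\tilde\Phi$, so the dual map $\alpha^*:\tilde H^*\to H^*$ is a topological group isomorphism with $\Phi^*\after\alpha^*=\alpha^*\after\tilde\Phi^*$. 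The only point that requires care is the translation part: $\alpha^*$ conjugates $\tilde\Phi^*$ to $\Phi^*$, but it sends the base point $\tilde\eta$ to $\tilde\eta\after\alpha$, which need not equal the chosen $\eta$. I would resolve this by noting that $\tilde\eta\after\alpha$ is again a homomorphism $H\to\torus$ whose restriction to $\ker\Lambda$ equals $\iota$ (because $\alpha$ is a morphism of signatures), hence is itself an extension of $\iota$. Theorem \ref{defs.t.sig-qds-conj} then supplies a rotation conjugating $(H^*;\Phi^*,\eta)$ to $(H^*;\Phi^*,\tilde\eta\after\alpha)$. Chaining this rotation with $\alpha^*$ gives the desired conjugacy, and since both maps preserve the Haar measure, it is automatically a conjugacy of measure-preserving systems.

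The only genuine obstacle is the mismatch of extensions just described, and it is precisely the content of Theorem \ref{defs.t.sig-qds-conj} that the conjugacy class of $(H^*;\Phi^*,\eta)$ depends on $\eta$ only through its restriction to $\ker\Lambda$, that is, through $\iota$. Everything else --- the dualization of $\alpha$ and the verification that the transported structures match --- is formal. The corollary then follows by composing the two isomorphisms $\bfX\cong(H^*,\haar;\Phi^*,\eta)$ and $(\tilde H^*,\haar;\tilde\Phi^*,\tilde\eta)\cong\bfY$ furnished by Theorem \ref{mqds.t.rep} with the conjugacy constructed above.
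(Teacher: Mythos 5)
Your proposal is correct and follows essentially the same route as the paper: the paper likewise deduces the corollary from the Representation Theorem (Theorem \ref{mqds.t.rep}), arguing that an isomorphism of signatures induces an isomorphism of the associated affine automorphism systems, with Theorem \ref{defs.t.sig-qds-conj} implicitly handling the dependence on the chosen extension $\eta$ of $\iota$. Your write-up merely spells out the details (the dualization $\alpha^*$ and the rotation correcting the translation part) that the paper leaves as "completely analogous" to the topological case.
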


\begin{rem}
Recall that  the notion of isomorphism used here is that
of a {\em Markov isomorphism}, see Introduction. By a famous theorem
of von Neumann, see \cite[Appendix E]{EFHN}, if the underlying
measure spaces are standard Lebesgue spaces, then Markov isomorphic  
systems are point isomorphic. Since a  system $\bfX$ is
Markov isomorphic to a standard Lebesgue system if and only if
it is separable, restricting the results to standard
Lebesgue spaces amounts to considering only signatures $(H, \Lambda, \iota)$
with a {\em countable} discrete group $H$.  
\end{rem}

Finally, as in the topological case, we complete the picture with
the realization result. Its proof is---mutatis mutandis---the same  as
the proof of Theorem \ref{tqds.t.real}.

\begin{thm}[Realization]\label{mqds.t.real}
Let $(H, \Lambda, \iota)$ be a signature such that $H$ is torsion-free.
Then any associated (as above) measure-preserving affine automorphism system 
$\bfX := (H^*, \haar; \Phi^*, \eta)$ is totally ergodic and
has quasi-discrete spectrum with signature (isomorphic to) 
$(H, \Lambda, \iota)$. 
\end{thm}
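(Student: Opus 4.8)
The plan is to reproduce the proof of Theorem \ref{tqds.t.real} almost verbatim, the only structural change being that the Banach space $\Ce(H^*)$ is replaced throughout by the Hilbert space $\Ell{2}(H^*,\haar)$, in which the characters constituting $H$ form an \emph{orthonormal basis} (Pontryagin duality on the compact group $H^*$). Writing $K := H^*$ and $\vphi := \Phi^* \cdot \eta$, I regard $H$ as a subgroup of $\Ell{0}(K;\torus)$ and compute, exactly as before, that $Th = h\circ\vphi = \eta(h)\,\Phi h$, whence $\Lambda_T h = \eta(h)\,\Lambda h$ for $h\in H$. In particular each $h\in H$ is a unimodular quasi-eigenvector, so $H \subseteq G$; since $\spann H$ is already dense in $\Ell{2}(K,\haar)$, so is $\spann G$, and therefore $\bfX$ has quasi-discrete spectrum.

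Next I would determine the point spectrum. Expanding an eigenfunction $f = \sum_{h\in H}\lambda_h h$ in the orthonormal basis and applying $T$ yields the coefficient identity $\lambda\,\lambda_{\Phi h} = \lambda_h\,\eta(h)$, so that $\abs{\lambda_{\Phi h}} = \abs{\lambda_h}$ for every $h$. Because $H$ is torsion-free, Lemma \ref{defs.l.torsion-free} guarantees that the $\Phi$-orbit of any $h\notin H_1$ is infinite, whence square-summability forces $\lambda_h = 0$ there; on $H_1$ one is left with $\lambda = \eta(h)$. Thus $\sigma_p(T) = \eta(H_1)$, each eigenspace is one-dimensional, and since $\eta$ is injective on $H_1$ the group of eigenvalues is torsion-free. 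Total ergodicity then follows directly, and this is the one genuine deviation from the topological argument (which concluded ``totally minimal'' via Lemma \ref{qds.l.totmin}): the eigenvalue $1$ is attained only at $h=\car$, so $\fix(T)=\C\car$ and $\bfX$ is ergodic, while the torsion-freeness of $\eta(H_1)$ rules out any nontrivial root of unity in the point spectrum, so no power $T^m$ acquires a new fixed vector, i.e.\ $\fix(T^m)=\C\car$ for all $m\ge 1$.

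Finally, to identify the signature I would introduce $\alpha: H \to \fact{G}{G_1}$, $\alpha(h):= h G_1$ (with $G_1 = \torus\car$), and verify as in Theorem \ref{tqds.t.real} that it is a monomorphism of signatures $(H,\Lambda,\iota)\to(\fact{G}{G_1},\Lambda_T,\widetilde\iota)$, using that $\eta(h)\in G_1$ makes $\Lambda_T h$ and $\Lambda h$ agree modulo $G_1$. The surjectivity of $\alpha$ is the step I expect to require the most care, as it is the only place where the measure enters: viewing $(H^*;\vphi)$ as a topological system, the argument above shows it is uniquely ergodic by Theorem \ref{tqds.t.unierg}, and its unique invariant measure is necessarily $\haar$; the orthogonality statement of that theorem then applies, so any $g\in G$ with $gG_1\notin\alpha(H)$ would be orthogonal to every $h\in H$ and hence vanish, contradicting $\abs{g}=\car$. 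This forces $\alpha$ to be an isomorphism, giving $\sig(\bfX)\cong(H,\Lambda,\iota)$ and completing the proof.
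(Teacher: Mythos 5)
Your overall route is the one the paper itself prescribes --- it proves Theorem \ref{mqds.t.real} by declaring that the argument of Theorem \ref{tqds.t.real} applies ``mutatis mutandis'' --- and most of your translation is carried out correctly: replacing Stone--Weierstrass density in $\Ce(H^*)$ by the fact that $H$ is an orthonormal basis of $\Ell{2}(H^*,\haar)$, running the coefficient-comparison argument to get $\sigma_p(T)=\eta(H_1)$ with one-dimensional eigenspaces, and replacing ``totally minimal via Lemma \ref{qds.l.totmin}'' by ``totally ergodic'' (ergodicity from $\fix(T)=\C\car$, and the absence of nontrivial roots of unity in $\sigma_p(T)$ ruling out fixed vectors of $T^m$, exactly as in the proof of Lemma \ref{qds.l.totmin}).

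There is, however, a genuine gap in your final step, and it sits precisely at the point you flagged as delicate. The signature of the measure-preserving system $\bfX$ is built from the group $G(\bfX)\subseteq \Ell{0}(\prX;\torus)$ of \emph{measurable} quasi-eigenvectors, so surjectivity of $\alpha$ means: every $g\in G(\bfX)$ lies in $\torus\cdot H$ modulo constants. The orthogonality statement of Theorem \ref{tqds.t.unierg}, which you invoke for this, concerns only the group $G(\bfK)$ of \emph{continuous} quasi-eigenvectors of the topological system $(H^*;\vphi)$; it says nothing about an a.e.-defined $g\in G(\bfX)$, which a priori need not have a continuous representative. Since the identification of $G(\bfX)$ with $\torus\cdot H$ modulo constants is exactly what surjectivity asserts, citing the topological theorem here is circular: that theorem cannot see functions outside $\Ce(H^*)$. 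The repair is standard, in either of two ways. One can rerun Lemma \ref{tqds.l.asymp} in the measurable setting: the binomial formula \eqref{defs.e.T-binom} holds a.e., Weyl's theorem applies for a.e.\ $x$, and dominated convergence together with the mean ergodic theorem yields $\int f = 0$ for every non-constant $f\in G(\bfX)$, whence elements of $G(\bfX)$ distinct modulo constants are orthogonal. Alternatively, one can simply quote Corollary \ref{fac.l.orth}, which states exactly this orthogonality for totally ergodic systems and is proved in Section 5 independently of Sections 3 and 4, so no circularity arises; note that you have already established total ergodicity of $\bfX$ by that point. With that substitution your proof is complete.
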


\subsection*{Final Considerations}\label{ss.finalcon}

With the representation theorems at hand, one can confine 
to systems of the form $\bfK = (H^*; \Phi^*, \eta)$ 
(and their measure-theoretic analoga) 
when studying the fine
structure of totally minimal/er\-godic systems with quasi-discrete spectrum.

As $H$ is the inductive limit of the $\Lambda$-invariant
subgroups $H_n$, 
the system $\bfK$ is the inverse limit of the systems $(H_n^*; \Phi^*, \eta)$. 
We shall briefly indicate that each step in this chain is an abstract
compact group extension by a continuous homomorphism. 

The canonical embedding $H_n \subseteq H_{n+1}$ induces a
canonical continuous epimorphism $H_{n{+}1}^* \to H_n^*$
with kernel
\[  F := \{ \gamma \in H_{n+1}^* \st \gamma\res{H_{n}} = \car \}. 
\]
Since all groups are Abelian, the compact subgroup $F$
of $H_{n+1}^*$ acts by multiplication as   automorphisms of the
affine rotation system. Indeed, for 
$\gamma \in F$ one has $\Lambda^*\gamma = \gamma \after \Lambda = 1$
on $H_{n+1}$ and  hence 
\[    \Phi^*(\chi \gamma) = (\chi \gamma) \Lambda^*(\chi \gamma) \eta
= \big(\chi (\Lambda^*\chi) (\Lambda^*\gamma) \eta \big)\,\,\gamma
= \big(\chi (\Lambda^*\chi) \eta \big)\,\,\gamma
= \Phi^*(\chi) \,\,\gamma
\]
for all $\chi\in H_{n+1}^*$. Hence $H_n^* \cong H_{n{+}1}^\ast/ F$ 
not just as compact groups, but also as affine rotation systems.

It follows (e.g. by \cite[Prop.{} 6.6]{Ellis1969}, but
the proof can be simplified because of minimality) 
that each totally minimal system with quasi-discrete spectrum 
is distal. 

\medskip

From this one can eventually prove that 
{\em every totally ergodic system with quasi-discrete
spectrum has zero entropy}. In fact, the proof is rather straightforward 
under the additional assumption
that the system is separable, i.e., its
$\Ell{1}$-space is separable. In that case,
the group $G/G_1$ (notation from above) has to be countable
by  Theorem \ref{tqds.t.unierg}.
Consequently, the $\Ell{\infty}$-closed linear span 
of $G$ is a separable
$C^*$-algebra, hence its Gelfand space is metrizable. To sum up, 
the original system has a totally minimal and metrizable model which, as
seen above, is distal. As Parry has shown in \cite{Parry1968} (see also
\cite[Chap.{} 4, Thm.{} 17]{Parry1981})
such systems have zero entropy. (Compare this proof with Abramov's from 
\cite[\S 5]{Abramov1962}.)

In the general case, i.e., if $\Ell{1}(\prX)$ is not separable, one may want to  
take advantage of the fact 
that the measure-theoretic entropy of a system is the supremum of
the entropies of its separable factors. However, we do not see how to proceed from here
directly without any further knowledge about the factors of
a system with quasi-discrete spectrum. 

It is our goal in the following section, built on
\cite{HahnParry1968}, to provide such knowledge.
We shall obtain a proof of the general statement
--- that 
{\em every} totally ergodic system with quasi-discrete
spectrum, separable or not,  has zero entropy --- which does not use any of the results of the present and the preceding section.


\section{Factors of systems with quasi-discrete spectrum}\label{s.fac}

In this  section, which is completely independent of Sections 3 and 4, 
 we study
factors of systems with quasi-discrete spectrum, recovering and extending
results from \cite{HahnParry1968}.

\medskip

\subsection*{A Technical Result}\mbox{}

\medskip

Let $\bfX$ and $\bfY$ be   measure-preserving
systems such that  $\bfY$  is a factor of $\bfX$. 
As is explained in \cite[Sec.{} 13.3]{EFHN}
one can consider the space $\Ell{2}(\prY)$ as being
a $T$-invariant subspace (in fact: a closed Banach sublattice containing the constants) of 
$\Ell{2}(\prX)$. (Note that we do not require the dynamics to be invertible, 
and even if it was, our notion of a factor only requires $T$-invariance and 
not $T$-bi-invariance. A $T$-bi-invariant factor is called a {\emdf strict factor},
see \cite[Sec.{} 13.4]{EFHN}.)

For simplicity, we shall abbreviate
\[ X := 
\Ell{2}(\prX) \quad \text{and}\quad Y : = \Ell{2}(\prY) \subseteq \Ell{2}(\prX).
\]
This is evidently an abuse of language, since usually $X$ and $Y$ denote the sets
of the underlying probability spaces. However, base-space maps do not occur in this section,
all arguments are purely operator theoretic, and it  is better to have simple symbols
for the function spaces rather than for the underlying sets. 

Following this philosophy, we denote by 
\[ \Cex{\cdot}{Y} : X \to Y
\]
the conditional expectation  (=Markov projection) onto the ($\Ell{2}$-space of the) factor.
It is an easy exercise to establish the identity
\[ T\Cex{f}{Y} = \Cex{Tf}{TY} \qquad (f\in X)
\]
where $TY = T(Y) = \{ Tf \st f\in Y\}$, a factor as well. 

Recall from above the abbreviation $\Lambda f := \conj{f}Tf$ and 
\[ G = \bigcup_{n\ge 0} G_n ,\quad \text{where}\quad
G_n = \{ g\in X \st \abs{g}=1\,\,\text{and}\,\,\Lambda^n g = \car\}.
\]
The main technical result of Hahn and Parry from \cite{HahnParry1968} 
is the following. We shall provide a new proof.

\begin{thm}\label{fac.t.HPmain}
Let $\bfX$ be a measure-preserving system with Koopman operator $T$ and  
let $H$ be a subset of $G(\bfX)$ such that 
\begin{equation}\label{fac.eq.HPmain}
 h\in H\quad \Rightarrow\quad \conj{h}T^n h \in H \quad \text{ for
all $n\ge 1$.}
\end{equation}
Let  $\bfY$ be a factor of $\bfX$ with the following property:
whenever $h\in H$ and $f\in Y$ are such that $hf\in \bigcup_{n\in \N} \fix(T^n)$, then  $hf$ is a constant. Then, for each $h\in H$ either 
$h\in Y$ or $h \perp Y$. 
\end{thm}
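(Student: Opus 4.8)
The plan is to work entirely on the Hilbert space level. Writing $g := \Cex{h}{Y}$ for the orthogonal projection (= conditional expectation) of $h$ onto $Y$, the asserted dichotomy ``$h\in Y$ or $h\perp Y$'' is exactly the dichotomy ``$g = h$ or $g = 0$''. Since $\Cex{\cdot}{Y}$ is an orthogonal projection, it therefore suffices to prove the single implication: if $g\neq 0$ then $h\in Y$ (for then $g = \Cex{h}{Y} = h$). I would prove this by induction on the order of $h$, i.e.\ on the least $n$ with $h\in G_n$. For the base case $h\in G_1$ one has $Th = h$, so $h\in\fix(T)$; applying the hypothesis on $\bfY$ with $f = \car\in Y$ shows that $h = h\cdot\car$ is a constant, whence $h\in Y$.

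For the inductive step let $h\in G_{m+1}\setminus G_m$ with $m\ge 1$ and set $h_1 := \Lambda h = \conj{h}Th$. By the closure property \eqref{fac.eq.HPmain}, applied repeatedly, every $\Lambda^j h$ lies in $H$ and has strictly smaller order than $h$, so the induction hypothesis applies to each of them. Moreover $h_1\in Y$ forces $Th_1\in Y$ (as $Y$ is $T$-invariant) and hence $\Lambda^j h\in Y$ for all $j\ge 1$; the induction hypothesis thus splits the argument into two cases, according to whether $\Lambda h\perp Y$ or $\Lambda h\in Y$.

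The main tool in the substantial case $\Lambda h\in Y$ is the pull-out identity $\Cex{hv}{Y} = v\,g$ for $v\in\Ell{\infty}(\prY)$, combined with the binomial formula $T^n h = h\cdot k_n$, where $k_n := \conj{h}T^n h = \prod_{j\ge 1}(\Lambda^j h)^{n\choose j}\in Y$; together these yield $\Cex{T^n h}{Y} = k_n\,g$ for every $n$. The strategy is then to produce, from the assumption $g\neq 0$, a nonzero $f\in Y$ and an $N\ge 1$ with $hf\in\fix(T^N)$: the hypothesis on $\bfY$ then forces $hf$ to be a (necessarily nonzero) constant $c$, and $f = c\conj{h}$ gives $\conj{h}\in Y$, i.e.\ $h\in Y$. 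The natural candidate is $f := \conj{g}$, for which $hf = h\conj{g}$ has modulus $|g|$, so that the entire inductive step reduces to showing that $h\conj{g}$ is fixed by some power of $T$.

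The hard part, and the place where one must really work, is exactly this last claim, because it requires understanding $T^n g = T^n\Cex{h}{Y}$. The only available covariance identity is $T\Cex{h}{Y} = \Cex{Th}{TY}$, and since the factor is merely $T$-invariant and not $T$-bi-invariant, $TY$ is in general strictly smaller than $Y$; one therefore cannot simply pull $k_n$ out of $\Cex{k_n g}{T^n Y}$. I expect this non-invertibility to be the central obstacle. The way I would circumvent it is to avoid computing $T^n g$ pointwise and instead to average: applying the mean ergodic theorem to the correlations $\langle T^n h, g\rangle = \int k_n|g|^2$ and invoking Weyl equidistribution for the quasi-eigenvalues $\Lambda^j h$, paralleling the argument of Lemma \ref{tqds.l.asymp}, one forces all ``off-diagonal'' contributions to vanish and concludes that $|g|$ equals $0$ or $1$ almost everywhere; on the set where $|g| = 1$ the function $h\conj{g}$ is genuinely $T^N$-invariant, closing the case. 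The remaining case $\Lambda h\perp Y$ should be handled by the dual correlation estimate, showing $g = 0$ directly.
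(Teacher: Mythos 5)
Your skeleton --- induction on the order of $h$, reduction to ``$g := \Cex{h}{Y} \neq 0 \Rightarrow h \in Y$'', the module identity $\Cex{T^n h}{Y} = k_n g$ when $k_n := \conj{h}T^n h \in Y$, and the final appeal to the fixed-point hypothesis with $f = \conj{g}$ --- matches the paper's proof, and you correctly isolate the central obstacle: $T$ need not be invertible on $Y$, so one cannot a priori pull $k_n$ out of $\Cex{\cdot}{T^n Y}$. But your proposed way around that obstacle fails, and this is a genuine gap. Theorem \ref{fac.t.HPmain} assumes neither ergodicity nor torsion-freeness: $\bfX$ is an arbitrary measure-preserving system, $H$ an arbitrary subset of $G(\bfX)$ satisfying \eqref{fac.eq.HPmain}, and the quasi-eigenvalues $\Lambda^j h$ are unimodular \emph{functions}, not numbers. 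There is therefore no irrational polynomial coefficient to feed into Weyl's theorem, and the argument of Lemma \ref{tqds.l.asymp} is simply unavailable (Section 5 is deliberately independent of Sections 3 and 4; in fact Theorem \ref{fac.t.HPmain} is what \emph{replaces} equidistribution arguments in the measure setting). Moreover, even if your averaging of the correlations $\int k_n |g|^2$ did yield $\abs{g} \in \{0,1\}$ a.e., that produces no $N$ with $T^N(h\conj{g}) = h\conj{g}$: numerical limits of correlations cannot deliver an almost-everywhere invariance identity, which is the whole point. The paper closes exactly this hole with an algebraic lemma missing from your proposal (Lemma \ref{fac.l.HPmain-aux}): $H \cap Y \subseteq Y_\infty := \bigcap_{n} T^n Y$, the invertible core of $Y$. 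Once $k_n$ is known to lie in $Y_\infty \subseteq T^n Y$, one gets the exact identity $T^n \Cex{h}{Y} = \Cex{h}{T^n Y}\,(\conj{h}T^n h)$; comparing $\Ell{2}$-norms and using $T^n Y \subseteq Y$ forces $\Cex{h}{T^n Y} = \Cex{h}{Y}$, whence $\conj{h}\,\Cex{h}{Y} \in \fix(T^n)$ exactly, with no averaging at all.

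The second gap is your case $\Lambda h \perp Y$, which you leave to a one-sentence hope. There the identity $\langle T^n h, g\rangle = \int k_n \abs{g}^2$ you rely on is false, because $k_n = \prod_{j\ge 1} (\Lambda^j h)^{n \choose j}$ need not lie in $Y$, so there is no ``dual correlation estimate'' to run. Note also that your case split retains only the $n=1$ information: $\Lambda h \perp Y$ by itself does not determine whether $\conj{h}T^n h$ lies in $Y$, is orthogonal to it, or neither, for $n \ge 2$. The paper splits cases differently --- either $\conj{h}T^n h \in Y$ for \emph{some} $n$ (handled as above), or, by the induction hypothesis, $\conj{h}T^n h \perp Y$ for \emph{all} $n$ --- and the second alternative requires an idea entirely absent from your sketch: the functions $f_n := h\,\conj{T^n h}\,T^n\Cex{h}{Y}$ are pairwise orthogonal and satisfy $\sprod{f_n}{h} = \norm{f_n}_2^2 = \norm{\Cex{h}{Y}}_2^2$, so that Bessel's inequality applied to the infinite family $(f_n)_n$ forces $\Cex{h}{Y} = 0$. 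Without the invertible-core lemma and without this Bessel argument, both halves of your induction step remain open.
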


\begin{rems}\label{fac.r.HPmain}
\begin{aufziii}
\item In Hahn and Parry's original formulation, $H$ was required to
be a $T$-invariant subgroup of $G(\bfX)$. 

\item It follows from  the representation
\[ \conj{h}T^nh = \Lambda( h \cdot Th \cdot T^2h \cdots T^{n{-}1}h)
\]
that $H$ satisfies the condition \eqref{fac.eq.HPmain}
if $H$ is $\Lambda$-invariant and 
for $h \in H$ one has  $h\: Th \cdots T^nh \in H $ for all $n \ge 1$. 

\item 
If $H \neq \emptyset$ then $\car \in H$, so 
a factor  $\bfY$ satisfying the hypotheses of the theorem  
is necessarily totally ergodic. And if $\bfY$ is totally
ergodic, a function $f$ as in the theorem 
has necessarily constant modulus. 

\end{aufziii}
\end{rems}

For the proof we introduce the notation 
$H_n = H\cap G_n$ for $n\in \N_0$, so that 
$H = \bigcup_{n\ge 0} H_n$. 
Since  $H$ is $\Lambda$-invariant,  
$\Lambda$ maps $H_{n{+}1}$ into $H_n$.
Note that $T$ is not assumed to be invertible on $Y$. Therefore we introduce 
the factor
\[ Y_\infty = \bigcap_{n\in \N} T^nY,
\]
which is the {\emdf invertible core} of $Y$, see \cite[Example 13.33]{EFHN}. As a consequence we have
$TY_{\infty} = Y_\infty$, and hence $T \Cex{f}{Y_\infty} = \Cex{Tf}{Y_\infty}$ for each 
$f\in X$.

\begin{lemma}\label{fac.l.HPmain-aux}
With the notation from above, $H \cap Y \subseteq Y_\infty$. 
\end{lemma}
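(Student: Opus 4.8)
The plan is to exploit two facts: that the bounded functions of the factor $\bfY$ form a $\conj{\,\cdot\,}$-closed, $T$-invariant subalgebra of $\Ell{\infty}(\prX)$, and that the Koopman operator $T$ acts \emph{invertibly} on quasi-eigenvectors. Throughout I identify $Y = \Ell{2}(\prY)$ with the space of $\SigmaY$-measurable $\Ell{2}$-functions inside $X$, so that its bounded part is the subalgebra $\Ell{\infty}(\prY)$.

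First I would fix $h \in H \cap Y$. Since $h \in G = \bigcup_{n} G_n$, there is a $k \ge 0$ with $\Lambda^{k+1} h = \car$, and $\abs h = 1$ gives $h \in \Ell{\infty}(\prY)$. Because $\bfY$ is a factor we have $TY \subseteq Y$ and $T$ preserves unimodularity, so $Th \in \Ell{\infty}(\prY)$ as well; as $\Ell{\infty}(\prY)$ is closed under products and conjugation, $\Lambda h = \conj h \, Th \in \Ell{\infty}(\prY)$. An immediate induction then shows that the entire tower $\Lambda^{j} h$ $(j \ge 0)$ lies in $\Ell{\infty}(\prY) \subseteq Y$.

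Next I would invoke the algebra from Section \ref{ss.signatures}: on the group $G$ the map $\Phi = \Id \cdot \Lambda$ is an automorphism, its inverse on $G_{k+1}$ being given (in additive notation) by the terminating Neumann series $\Phi^{-1} = \sum_{j\ge 0}(-1)^{j}\Lambda^{j}$. Consequently the subgroup $M := \gen{\Lambda^{j} h \st j \ge 0} \subseteq G_{k+1}$, generated by the finitely many elements $h, \Lambda h, \dots, \Lambda^{k}h$, is both $\Phi$- and $\Phi^{-1}$-invariant, and by the previous step $M \subseteq \Ell{\infty}(\prY) \subseteq Y$. In particular $\Phi^{-n} h \in M \subseteq Y$ for every $n \ge 0$. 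Since $T$ coincides with $\Phi$ on $G$, this yields $T^{n}(\Phi^{-n} h) = \Phi^{n}(\Phi^{-n}h) = h$, so $h \in T^{n} Y$ for all $n$, whence $h \in \bigcap_{n} T^{n} Y = Y_\infty$. As $h$ was arbitrary, $H \cap Y \subseteq Y_\infty$.

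The one point deserving care — and the crux of the argument — is the stability of the tower $\{\Lambda^{j}h\}$ and of the backward orbit $\{\Phi^{-n}h\}$ inside $Y$: everything hinges on reading $Y$ as the $\SigmaY$-measurable functions, so that the \emph{multiplicative} operation $\Lambda f = \conj f \, Tf$ and the product defining $\Phi^{-1}$ do not leave $\Ell{\infty}(\prY)$. Once this closure is established, the invertibility of $\Phi$ on each $G_{k+1}$ does the rest; note that total ergodicity of $\bfY$ is not needed for this particular lemma.
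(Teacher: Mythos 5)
Your proof is correct, and it takes a recognizably different route from the paper's. The paper argues by a double induction: an outer induction on the order $n$, showing $H_n \cap Y \subseteq Y_\infty$ (here the key step is $\Lambda h = \conj{h}\,Th \in H_n \cap Y$, so that $\Lambda h \in Y_\infty$ by the induction hypothesis), followed by an inner induction on $m$ showing $h \in T^m Y$, driven by the single identity $h = \conj{\Lambda h}\, Th$ and the multiplicativity of $T$. You instead import the algebraic observation from Section \ref{ss.signatures} that $\Phi = \Id\cdot\Lambda$ restricts to an automorphism of each $G_{k+1}$, with inverse given by the terminating Neumann series, and use it to write the preimage of $h$ under $T^n$ in closed form: $\Phi^{-n}h$, a product of integer powers of the tower elements $\Lambda^j h$, all of which you have already placed in the algebra $Y \cap \Ell{\infty}$. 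Both arguments rest on the same two pillars --- that $Y \cap \Ell{\infty}$ is a conjugation-closed, $T$-invariant algebra, and that $T$ acts as $\Phi$ on unimodular functions --- but yours replaces the paper's recursions with explicit preimages, and it actually establishes the stronger inclusion $G(\bfX) \cap Y \subseteq Y_\infty$ without ever invoking the $\Lambda$-invariance of $H$ (condition \eqref{fac.eq.HPmain}), which the paper's induction nominally uses to keep the argument inside the sets $H_n$. The mild cost is the dependence on the Section \ref{ss.signatures} fact that $\Phi$ is invertible on each $G_n$; the paper's in-section proof re-derives that invertibility implicitly, one application of $T$ at a time.
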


\begin{proof}
We show $H_n \cap Y \subseteq Y_\infty$ by induction on $n\in \N_0$. Since $H_0$ consists 
of the function $\car$ only, the assertion is trivially true for $n =0$. For the step
from $n$ to $n{+}1$, suppose that $h \in H_{n+1} \cap Y$. 
Then $\Lambda h = \conj{h} Th \in H_n \cap Y$, since $Y \cap \Ell{\infty}$ is an algebra.
By induction we conclude that $\Lambda h \in Y_\infty$. Now, 
use the identity $h = \conj{\Lambda h} Th$ together with the
multiplicativity of $T$ to prove inductively
that $h \in T^mY$ for each $m \in \N$. Hence, $h\in Y_\infty$ as claimed. 
\end{proof}

We now turn to the proof of Theorem \ref{fac.t.HPmain}. 
Under the given hypothesis we shall
prove by induction on $m\in \N_0$ the assertion
\[    \forall\, h\in H_m:  h \in Y \,\,\vee\,\, h\perp Y.
\]
Note that for $m= 0$ this is trivially true. Let $m\in \N_0$, suppose that the assertion
is true for $m$ and let $h\in H_{m{+}1}$. We distinguish two cases.

\smallskip
\noindent{\bf First case.} {\em There is $n \in \N$ such that $\conj{h}T^nh \in Y$.} 

\noindent
Then, by Lemma \ref{fac.l.HPmain-aux}, 
$\conj{h}T^n h \in H \cap Y \subseteq Y_\infty$, and hence
\begin{equation}\label{fac.eq.aux}
  T^n \Cex{h}{Y} = \Cex{T^nh}{T^nY}  
= \Cex{h}{T^n Y} (\conj{h} T^n h).
\end{equation}
Since the function $\conj{h} T^n h$ has modulus equal to $\car$, it follows that
\[ \norm{\Cex{h}{Y}}_2 = \norm{T^n \Cex{h}{Y}}_2 = 
\norm{\Cex{h}{T^n Y}}_2.
\]
But $T^nY \subseteq Y$, and hence $\Cex{h}{Y} = \Cex{h}{T^nY}$. It now follows
from \eqref{fac.eq.aux} that  $\conj{h}\:\Cex{h}{Y} \in
\fix(T^n)$. Hence, by the assumption of the theorem, 
the function $h\: \conj{\Cex{h}{Y}}$ is a constant.
It follows that there is $c\in \C$ such that 
\[   \Cex{h}{Y} = c h.
\]
Since the eigenvalues of a  projection can be only $0$ and $1$, it follows that 
$c \in \{0,1\}$. If $c= 1$ then $h \in Y$; if $c=0$ then
$\Cex{h}{Y} = 0$, i.e., $h \perp Y$. This settles the first case.

\medskip

\noindent
\noindent{\bf Second case.} {\em For all $n \in \N$, $\conj{h}T^nh \notin Y$.}

\noindent
Then, since $\conj{h}T^n h \in H_m$ and by the induction hypothesis, 
$\conj{h}T^nh \perp Y$ for all $n \in \N$. Applying $T^k$ yields
\[ 0 \le k <  n \quad \Rightarrow\quad T^kh \perp T^nh \,\,\mod{T^kY}
\]
by which it is meant that $(T^kh)y \perp (T^nh) y'$ for all $y, y'\in T^k Y$. 
Now we define, for each $n \in \N_0$,
\[ f_n := h \, \conj{T^nh} \,\Cex{T^nh}{T^n Y} = h \, \conj{T^nh}\, T^n\Cex{h}{Y}.
\]
By the preceding step we have  $f_n \perp f_k$ whenever $n \neq k$. Moreover,
\begin{align*}
 \sprod{f_n}{h} & = \int  \conj{T^nh} \,\Cex{T^nh}{T^n Y} 
= \int \abs{\Cex{T^nh}{T^n Y}}^2 = \norm{f_n}^2_2
\end{align*}
since $\Cex{\cdot}{T^nY}$ is an orthogonal projection. 
This shows that $\sum_n f_n$ is the orthogonal projection of $h$ 
onto the subspace
generated by the functions $f_n$. Hence, Bessel's inequality yields
\[
1 = \norm{h}_2^2 \ge \sum_n \norm{f_n}_2^2 = \sum_n \norm{T^n \Cex{h}{Y}}_2^2
= \sum_n   \norm{\Cex{h}{Y}}_2^2.
\]
Since the sum is infinite, we must have $\Cex{h}{Y} = 0$, i.e., $h\perp Y$. 
This concludes the proof. \qed

\medskip

\begin{cor}\label{fac.l.orth}
Let $\bfX$ be a totally ergodic system. 
If $f,\:g \in G(\bfX)$ are different modulo constant functions,
then $f \perp g$. 
\end{cor}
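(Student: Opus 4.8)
The plan is to reduce the statement to a single integral identity and then prove that identity by induction on the order of a quasi-eigenvector. First I would set $h := f\conj g$. Since $G := G(\bfX)$ is a group (closed under products and under complex conjugation) we have $h\in G$, and $h$ is unimodular; moreover $f$ and $g$ are different modulo constants precisely when $h\notin G_1$, because in an ergodic system $G_1 = \{c\car \st c\in \torus\}$. As $\sprod{f}{g} = \int f\conj g\,d\mu = \sprod{h}{\car}$, the corollary is equivalent to the assertion
\[
 h \in G\setminus G_1 \quad\Longrightarrow\quad \textstyle\int h\,d\mu = 0 .
\]

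To prove this I would induct on the unique $k\ge 1$ with $h\in G_{k+1}\setminus G_k$. The engine is the binomial formula \eqref{defs.e.binom} applied to $\Phi = T$, which gives $\conj h\,T^n h = \prod_{j=1}^n (\Lambda^j h)^{\binom{n}{j}}$; since $\Lambda^j h = \car$ for $j>k$, for $n\ge k$ this collapses to
\[
 \sprod{T^n h}{h} = \int \conj h\,T^n h\,d\mu = \alpha^{\binom{n}{k}}\!\int w_n\,d\mu,\qquad
 \alpha := \Lambda^k h,\quad w_n := \prod_{j=1}^{k-1}(\Lambda^j h)^{\binom{n}{j}} .
\]
Here $\alpha\in H_1$ is a unimodular eigenvalue, and total ergodicity forces $H_1$ to be torsion-free (an eigenvalue of finite order would produce a non-constant element of some $\fix(T^m)$); since $h\notin G_k$ we have $\alpha\neq \car$, so $\alpha^n\neq \car$ for every $n\ge 1$. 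In the base case $k=1$ one simply notes $\int h = \int Th = \alpha\int h$, whence $\int h = 0$. For the inductive step $k\ge 2$ I would observe that $w_n\in G_k$ and that $\Lambda^{k-1}w_n = \alpha^n\neq \car$, so $w_n\in G_k\setminus G_1$ has order at most $k-1$; the induction hypothesis then yields $\int w_n = 0$, whence $\sprod{T^n h}{h} = 0$ for all $n\ge k$.

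Finally I would convert the vanishing of these correlations into the vanishing of $\int h$ by averaging. In the inductive case $\frac1N\sum_{n=0}^{N-1}\sprod{T^n h}{h}\to 0$, since all but finitely many terms vanish. On the other hand, $T$ is a contraction on $\Ell2(\prX)$, so the von Neumann mean ergodic theorem gives $A_N h := \frac1N\sum_{n=0}^{N-1}T^n h \to Ph$ in $\Ell2$, where $P$ is the orthogonal projection onto $\fix(T)$; by ergodicity $\fix(T) = \C\car$ and $Ph = (\int h)\,\car$. Pairing with $h$ gives $\sprod{A_N h}{h}\to \abs{\int h}^{2}$, and comparing the two limits forces $\int h = 0$. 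The main obstacle is the bookkeeping in the inductive step: one must correctly isolate the lower-order factor $w_n$ and check that it avoids the constants for \emph{every} $n$, which is exactly where torsion-freeness of $H_1$ (hence total ergodicity) is used; the passage from the correlations to the integral via the mean ergodic theorem is then routine.
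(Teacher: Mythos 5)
Your proof is correct, but it follows a genuinely different route from the paper's. The paper disposes of this corollary in two lines: it takes $\bfY$ to be the trivial factor, lets $H$ be the smallest subset of $G(\bfX)$ containing $h = f\conj{g}$ and invariant under the maps $f \mapsto \conj{f}T^n f$, and invokes the technical Theorem \ref{fac.t.HPmain}, whose dichotomy ($h \in Y$ or $h \perp Y$) immediately gives ``$h$ constant or $\int h = 0$''. You instead give a self-contained induction on the order of $h$: the binomial identity \eqref{defs.e.binom} reduces the correlation $\sprod{T^n h}{h}$ to $\alpha^{\binom{n}{k}}\int w_n$, torsion-freeness of $H_1$ (correctly derived from total ergodicity alone) guarantees $w_n \notin G_{k-1}$ so the induction hypothesis kills $\int w_n$, and the von Neumann mean ergodic theorem converts the vanishing of correlations for $n \ge k$ into $\int h = 0$. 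All steps check out, including the computation $\Lambda^{k-1}w_n = \alpha^n$ and the applicability of the induction hypothesis to the $n$-dependent functions $w_n$. What your approach buys: it avoids the entire conditional-expectation/Bessel apparatus of Theorem \ref{fac.t.HPmain}, and it is in effect the measure-theoretic analogue of the paper's topological Lemma \ref{tqds.l.asymp}, with Weyl's equidistribution theorem replaced by induction plus the mean ergodic theorem; indeed you prove something slightly stronger, namely that $\sprod{T^n h}{h}$ vanishes identically for all $n \ge k$, not merely in Ces\`aro mean. What the paper's approach buys: since Theorem \ref{fac.t.HPmain} must be proved anyway for the classification of factors (Theorem \ref{fac.t.biject}), deriving the corollary from it costs nothing and keeps Section 5 organized around a single technical engine, whereas your argument, though more elementary, duplicates some of the inductive work hidden inside that theorem's proof.
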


\begin{proof}
We let $\bfY$ be  the trivial factor and 
$H$ be the smallest subset of $G = G(\bfX)$ that contains
$h := f\conj{g}$ and 
 is invariant under all the mappings $f \mapsto \conj{f}T^n f$, $n
 \in \N$.  Then the hypotheses of Theorem \ref{fac.t.HPmain} are satisfied.
It follows that either $h$ is constant or $\int h = 0$. 
\end{proof}

\begin{cor}\label{fac.c.orth}
Let $\bfX$ be a totally ergodic system, let 
$M \subseteq G(\bfX)$ be such that $G_1 M \subseteq M$. Then
\[ G(\bfX) \cap \cspan(M) = M
\]
where the closure is within $\Ell{2}(\prX)$. In particular, 
if  $n \in \N$ is such 
that $G_n(\bfX)$ is total in $\Ell{2}(\prX)$, then $G(\bfX) = G_n(\bfX)$.  
\end{cor}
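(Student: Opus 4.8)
The inclusion $M \subseteq G(\bfX) \cap \cspan(M)$ is immediate, since each element of $M$ lies in $G(\bfX)$ and, a fortiori, in its own closed linear span. The content is the reverse inclusion, and the plan is to exploit the near-orthogonality of $G(\bfX)$ furnished by Corollary \ref{fac.l.orth}: two unimodular quasi-eigenvectors that differ modulo the constants $G_1$ are orthogonal in $\Ell{2}(\prX)$, whereas each of them has $\Ell{2}$-norm one (the measure being a probability measure and the functions unimodular). Thus $G(\bfX)$ splits, after a choice of representatives, into an orthonormal system, and the whole statement reduces to a bookkeeping of $G_1$-cosets.

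First I would pass to representatives. Since $G_1 M \subseteq M$, the set $M$ is a union of cosets of $G_1$ in $G(\bfX)$. Choose a set $R \subseteq G(\bfX)$ containing exactly one element from each coset $gG_1$, and put $R_M := R \cap M$, the subfamily of those representatives whose coset lies inside $M$. By Corollary \ref{fac.l.orth} the family $R$ is orthonormal in $\Ell{2}(\prX)$. Multiplication by a unimodular constant does not change a closed linear span, so $\cspan(M) = \cspan(R_M)$; in particular, $R$ being orthonormal with $R_M \subseteq R$, the space $\cspan(M)$ is orthogonal to every element of $R \setminus R_M$.

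Now take any $g \in G(\bfX) \cap \cspan(M)$ and let $g_0 \in R$ be the representative of its coset, so that $g = c g_0$ for some $c \in \torus$. Then $\sprod{g}{g_0} \neq 0$ (indeed of modulus one), while $g$ is orthogonal to every other element of $R$, since such an element lies in a different coset. As $g \in \cspan(M) = \cspan(R_M)$ is orthogonal to all of $R \setminus R_M$, the representative $g_0$ cannot lie in $R \setminus R_M$; hence $g_0 \in R_M \subseteq M$, and therefore $g = c g_0 \in M$ by the invariance $G_1 M \subseteq M$. This yields $G(\bfX) \cap \cspan(M) \subseteq M$, and with the trivial inclusion, equality.

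For the final assertion, note that $G_n = \ker(\Lambda^n)$ is a subgroup containing $G_1$, so $M := G_n$ satisfies $G_1 M \subseteq M$. If $G_n$ is total, i.e.\ $\cspan(G_n) = \Ell{2}(\prX)$, then the equality just proved gives $G(\bfX) = G(\bfX) \cap \Ell{2}(\prX) = G(\bfX) \cap \cspan(G_n) = G_n$. I do not expect a genuine obstacle here: the one delicate point is correctly organizing the $G_1$-cosets so that $\cspan(M)$ is recognized as the span of an orthonormal subfamily, after which isolating the representative $g_0$ is a single inner-product computation.
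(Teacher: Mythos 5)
Your proof is correct and takes essentially the same route as the paper: both rest on Corollary \ref{fac.l.orth} together with the observation that $G_1 M \subseteq M$ makes $M$ a union of $G_1$-cosets. The paper argues by contraposition---any $f \in G(\bfX)\setminus M$ differs modulo constants from every element of $M$, hence $f \perp M$ and thus $f \notin \cspan(M)$---which reaches the conclusion without your explicit choice of coset representatives, but the underlying orthogonality argument is identical.
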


\begin{proof}
For the nontrivial inclusion, 
suppose that $f\in G(\bfX) \without M$. 
Then, since $G_1$ is the set of constant functions in $G(\bfX)$
 and
$G_1 M \subseteq M$, $f$ is different modulo constants 
from every  element of $M$. By Corollary \ref{fac.l.orth},
$f \perp M$, and hence $f \notin \cspan(M)$. 

The second assertion follows from the first by letting
$M = G_n(\prX)$.  
\end{proof}

\medskip

\subsection*{The lattice of factors}\mbox{}

\medskip

With Theorem \ref{fac.t.HPmain} at hand, we can turn to the main
result of this section.
Let $\bfX$ be a totally ergodic system with quasi-discrete spectrum,
with its groups $G = G(\bfX)$ and $H= H(\bfX)$ 
of quasi-eigenvectors and quasi-eigenvalues, respectively, and its
derived homomorphism $\Lambda = \Lambda_\bfX$. 
Recall that the factors of $\bfX$
can be identified with closed and $T$-invariant sublattices 
of $X= \Ell{2}(\prX)$ containing the constants. As such, the factors
form a (complete) lattice. To every factor $\bfY$ of $\bfX$, we can
form its group $H(\bfY)$ of quasi-eigenvalues, which is in a natural way
a $\Lambda_\bfX$-invariant subgroup of $H(\bfX)$.  Indeed,
with the notational conventions from above, 
\[ G(\bfY) = Y \cap G
\]
is the group of quasi-eigenvalues of $\bfY$, and $\Lambda_\bfY = 
\Lambda\restrict{Y \cap G}$. Hence, $H(\bfY) = \Lambda(Y \cap G)$
is a $\Lambda$-invariant subgroup of $H$. 

Conversely, let $K \le H$ be any $\Lambda$-invariant subgroup of $H$.
Then 
\[ \Lambda^{-1}(K) := \{ f\in G \st \Lambda f \in K\}
\] 
is a $T$-invariant subgroup of $G$ containing $G_1$. 
Hence, $\spann(\Lambda^{-1}(K))$ is a $T$-invariant
subalgebra of $\Ell{\infty}(\prX)$
containing the constants, and therefore its closure in $\Ell{2}$,
$\cspan(\Lambda^{-1}(K))$, is a factor. 

The following theorem states that these mappings constitute 
a pair of mutually inverse order-preserving 
bijections between the lattice of factors
on one side and the lattice of $\Lambda$-invariant subgroups
on the other side.

\begin{thm}\label{fac.t.biject}
Let $\bfX$ be a totally ergodic system with quasi-discrete
spectrum, with group of quasi-eigenvectors $G= G(\bfX)$ and
derived homomorphism $\Lambda = \Lambda_\bfX$. 
Then the mappings
\[  Y  \mapsto \Lambda(Y \cap G), \qquad 
K \mapsto \cls{\spann}^{\Ell{2}}(\Lambda^{-1}(K))
\]
are mutually inverse isomorphisms between the 
lattice of factors $\bfY$ of $\bfX$
and the lattice of $\Lambda$-invariant subgroups $K$ of $H(\bfX)$.
\end{thm}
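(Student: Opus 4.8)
The plan is to establish that the two maps $\Phi: Y \mapsto \Lambda(Y \cap G)$ and $\Psi: K \mapsto \cspan(\Lambda^{-1}(K))$ are mutually inverse, order-preserving bijections. Both maps are visibly monotone (inclusion of factors gives inclusion of quasi-eigenvalue groups and vice versa), so the real content is to prove the two composite identities $\Phi \circ \Psi = \id$ on $\Lambda$-invariant subgroups of $H$, and $\Psi \circ \Phi = \id$ on factors. Once these are established, order-preservation in both directions makes the maps automatic lattice isomorphisms. The key tool throughout will be the orthogonality result (Corollary \ref{fac.l.orth} / Corollary \ref{fac.c.orth}): in a totally ergodic system, distinct-mod-constants quasi-eigenvectors are orthogonal, so that $G$ behaves like an orthonormal-type basis and each factor $Y$ is simply the closed span of the quasi-eigenvectors it contains.

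First I would prove a structural lemma identifying any factor with the closed span of its own quasi-eigenvectors: for a factor $\bfY$, one has $Y = \cspan(Y \cap G)$. The inclusion $\supseteq$ is clear since $Y$ is closed and contains $Y \cap G$; for $\subseteq$, the point is that $Y$ itself inherits quasi-discrete spectrum, so $G(\bfY) = Y \cap G$ is already total in $Y$. This is where Theorem \ref{fac.t.HPmain} does the heavy lifting: applying it with $H$ the $\Lambda$-invariant group $G$ and the factor $\bfY$ shows that every $h \in G$ satisfies either $h \in Y$ or $h \perp Y$ (one must check the theorem's hypothesis, that $hf \in \bigcup_n \fix(T^n)$ with $f \in Y$ forces $hf$ constant, which follows from total ergodicity of the factor). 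Hence the orthogonal complement of $Y$ inside $X$ is spanned by the quasi-eigenvectors not in $Y$, forcing $Y = \cspan(Y\cap G)$.

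With this lemma, the identity $\Psi \circ \Phi = \id$ reduces to showing $Y \cap G = \Lambda^{-1}(\Lambda(Y\cap G))$, i.e.\ that $Y \cap G$ is exactly the set of $g \in G$ with $\Lambda g \in \Lambda(Y\cap G)$. The inclusion $\subseteq$ is trivial; for $\supseteq$, if $\Lambda g = \Lambda g'$ for some $g' \in Y \cap G$, then $g \conj{g'} \in G_1 \subseteq Y$ (it is a constant), so $g = (g\conj{g'})\,g' \in Y$ as $Y \cap \Ell{\infty}$ is an algebra containing the constants; thus $g \in Y \cap G$. For the reverse composite $\Phi \circ \Psi = \id$, given a $\Lambda$-invariant subgroup $K \le H$ I must show $\Lambda\big(\cspan(\Lambda^{-1}(K)) \cap G\big) = K$. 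The inclusion $\supseteq$ holds because $\Lambda^{-1}(K) \subseteq \cspan(\Lambda^{-1}(K))\cap G$ and $\Lambda(\Lambda^{-1}(K)) = K$ (surjectivity using $G_1 \subseteq \Lambda^{-1}(K)$ and $K \subseteq H = \Lambda(G)$). The inclusion $\subseteq$ requires that $\cspan(\Lambda^{-1}(K)) \cap G = \Lambda^{-1}(K)$, which is precisely Corollary \ref{fac.c.orth} applied with $M = \Lambda^{-1}(K)$: one checks $G_1 M \subseteq M$, and then $G \cap \cspan(M) = M$ gives the claim, whence applying $\Lambda$ yields $K$.

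The main obstacle I anticipate is the structural lemma $Y = \cspan(Y \cap G)$, specifically verifying the hypothesis of Theorem \ref{fac.t.HPmain} and handling the fact that $T$ need not be invertible on $Y$ (forcing the detour through the invertible core $Y_\infty$ and Lemma \ref{fac.l.HPmain-aux}). Everything else is then bookkeeping with the orthogonality corollaries and the algebra structure of $Y \cap \Ell{\infty}$. Once the dichotomy ``$h \in Y$ or $h \perp Y$'' is in hand for all $h \in G$, the identification of factors with closed spans of quasi-eigenvectors makes both the bijectivity and the lattice-isomorphism properties fall out cleanly, since intersections and joins of factors correspond to intersections and generated subgroups on the group side.
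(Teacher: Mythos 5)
Your proposal is correct and follows essentially the same route as the paper: the dichotomy from Theorem \ref{fac.t.HPmain} plus the orthogonality of $G$ modulo constants (Corollary \ref{fac.l.orth}) gives $Y = \cspan(Y\cap G)$, the constants argument gives $\Lambda^{-1}(\Lambda(Y\cap G)) = Y\cap G$, and Corollary \ref{fac.c.orth} with $M = \Lambda^{-1}(K)$ handles the other composite, exactly as in the paper's proof. The only nitpick is that the hypothesis of Theorem \ref{fac.t.HPmain} is trivially satisfied because the \emph{ambient} system $\bfX$ is totally ergodic (so $\fix(T^n) = \C\car$ in $\Ell{2}(\prX)$), rather than by total ergodicity of the factor as you state.
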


\begin{proof}
It remains to be shown that the two mappings are mutually inverse. 
Let $\bfY$ be a factor and $K := \Lambda(Y \cap G)$. Then
$\Lambda^{-1}(K) = Y \cap G$ since $G_1 \subseteq Y \cap G$. 
Denote $Y':= \cspan(Y \cap G)$. Then $Y' \subseteq Y$, and we claim
that $Y = Y'$.  

Since by Corollary \ref{fac.l.orth}   the elements of $G$ (modulo constants)
form an orthonormal basis of $X$, 
the space ${Y'}^\perp$ is generated  by those $f\in G$ such that 
$f \notin Y$. By Theorem \ref{fac.t.HPmain}, these functions also satisfy 
$f \perp Y$, so that ${Y'}^\perp \subseteq Y^\perp$. Hence, $Y \subseteq 
Y'$ as desired.

Conversely, let $K \leq H$ be any $\Lambda$-invariant subgroup and let
$Y := \cspan(\Lambda^{-1}(K))$.
Corollary \ref{fac.c.orth} applied with $M :=  \Lambda^{-1}(K)$ yields 
\[ M = G \cap  \cspan(M) = Y \cap G,
\]
from which it follows that 
\[ K = \Lambda(\Lambda^{-1}(K)) = \Lambda(M) = \Lambda(Y \cap G)
\]
as desired. 
\end{proof}

\begin{cor}\label{fac.c.fact} 
Let $\bfX$ be a totally ergodic system with quasi-discrete spectrum, and
let $\bfY$ be a factor of $\bfX$. Then $\bfY$ has quasi-discrete spectrum 
as well. 
\end{cor}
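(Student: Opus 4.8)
The plan is to obtain this as an essentially immediate consequence of the lattice isomorphism in Theorem \ref{fac.t.biject}; no genuinely new argument should be needed. Recall that, with the conventions of that theorem, the group of quasi-eigenvectors of the factor $\bfY$ is $G(\bfY) = Y \cap G$, where $Y = \Ell{2}(\prY)$ is regarded as a closed, $T$-invariant sublattice of $X = \Ell{2}(\prX)$ containing the constants, and $\Lambda_\bfY = \Lambda\restrict{Y \cap G}$. By definition, $\bfY$ has quasi-discrete spectrum precisely when $\cspan^{\Ell{2}(\prY)}(G(\bfY)) = \Ell{2}(\prY)$, that is, when $\cspan(Y \cap G) = Y$. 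Since $Y$ is closed in $X$, it makes no difference whether this closure is formed in $\Ell{2}(\prY)$ or in the ambient space $\Ell{2}(\prX)$.

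First I would set $K := \Lambda(Y \cap G)$ and apply Theorem \ref{fac.t.biject}. As the two maps there are mutually inverse, the factor $Y$ is recovered as $Y = \cspan(\Lambda^{-1}(K))$. The proof of that theorem already records that $\Lambda^{-1}(K) = Y \cap G$, the inclusion $G_1 \subseteq Y \cap G$ ensuring that nothing is lost in passing back and forth. Combining these identities gives $Y = \cspan(Y \cap G) = \cspan(G(\bfY))$, which is exactly the assertion that $\bfY$ has quasi-discrete spectrum.

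There is no real obstacle here: all of the substance is contained in Theorem \ref{fac.t.biject}, and the only thing to verify is the matching of definitions, namely that $Y \cap G$ is indeed the group $G(\bfY)$ of quasi-eigenvectors of $\bfY$ computed with respect to $\Lambda_\bfY$, and that totality in $Y$ and in the ambient $\Ell{2}(\prX)$ agree because $Y$ is a closed subspace. Both points are routine.
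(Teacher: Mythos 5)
Your proof is correct and takes essentially the same route as the paper: Corollary \ref{fac.c.fact} is stated there without a separate argument precisely because the mutual-inverse property in Theorem \ref{fac.t.biject} (together with the identities $G(\bfY)=Y\cap G$ and $\Lambda^{-1}(\Lambda(Y\cap G))=Y\cap G$ recorded in its proof) yields $Y=\cspan(Y\cap G)$, i.e.\ quasi-discrete spectrum for $\bfY$. Your verification of the definitional matching (and that closure in $\Ell{2}(\prY)$ versus $\Ell{2}(\prX)$ is immaterial since $Y$ is closed) is exactly the routine checking the paper leaves implicit.
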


\medskip

\subsection*{Other Consequences} \mbox{}

\medskip

In the remaining part of this section, we
draw some other straightforward consequences of Theorems \ref{fac.t.HPmain} and
\ref{fac.t.biject}.

\begin{cor}
Let $\bfX$ be a totally ergodic measure-preserving system with quasi-discrete spectrum.
Then $\bfX$ has zero entropy.  
\end{cor}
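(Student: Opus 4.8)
The plan is to deduce the general statement from the separable case by exploiting the description of the factors obtained in this section, so that the only genuinely dynamical input is the (already available) zero-entropy fact for separable systems.

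First I would invoke the standard fact that the measure-preserving entropy of a system is the supremum of the entropies of its separable factors. Indeed, $h(\bfX)=\sup_P h_\mu(T,P)$ as $P$ ranges over the finite measurable partitions, and a single finite partition together with all its $T$-iterates generates a countably generated $T$-invariant sub-$\sigma$-algebra, that is, a separable factor $\bfY_P$ with $h_\mu(T,P)\le h(\bfY_P)\le h(\bfX)$. Hence $h(\bfX)=\sup\{h(\bfY)\st \bfY\text{ a separable factor of }\bfX\}$.

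Next I would pin down the nature of these separable factors. By Corollary \ref{fac.c.fact} every factor $\bfY$ of $\bfX$ again has quasi-discrete spectrum, and since $\fix(T^n)\cap Y\subseteq \fix(T^n)=\C\car$ for every $n$ (total ergodicity of $\bfX$), each such factor is again totally ergodic. Thus the supremum above is taken over separable, totally ergodic systems with quasi-discrete spectrum, and the problem is reduced to showing that every such system has zero entropy.

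Finally, the separable case is exactly the one already treated in the Final Considerations following Theorem \ref{mqds.t.real}: a separable totally ergodic system with quasi-discrete spectrum possesses a metrizable, totally minimal topological model which is distal, and by Parry's theorem such a system has zero entropy. Feeding this back into the two preceding steps yields $h(\bfX)=\sup_P h(\bfY_P)=0$. The crux of the whole argument --- and precisely the gap that could not be bridged at the end of Section \ref{s.mqds} --- is the passage from the (known) separable case to arbitrary, possibly non-separable systems; it is Corollary \ref{fac.c.fact}, guaranteeing that the relevant separable factors are themselves of quasi-discrete spectrum, that closes this gap, so I expect no further serious obstacle once the reduction to separable factors is in place.
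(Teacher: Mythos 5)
Your proof is correct and is essentially the paper's own first argument: reduce to separable factors via the supremum formula for entropy, note via Corollary \ref{fac.c.fact} that such factors again have quasi-discrete spectrum (and inherit total ergodicity), and invoke the metrizable distal model plus Parry's theorem from the end of Section \ref{s.mqds} for the separable case. The paper additionally records a second, more self-contained argument --- for a finite sub-$\sigma$-algebra $\calA$, the factor generated by $\Sigma_\prY := \bigvee_{j=1}^\infty \vphi^{-j}\calA$ has quasi-discrete spectrum and is therefore invertible, so $\calA \subseteq \Sigma_\prY$ and $h(T,\calA)=0$ by a classical result of Walters --- which avoids the representation theory of Sections 3 and 4 entirely, fulfilling the goal announced at the end of Section \ref{s.mqds}.
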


\begin{proof}
As already noted, it suffices to show that every separable factor of $\bfX$ 
has zero entropy. By Corollary \ref{fac.c.fact}, such a factor has again quasi-discrete
spectrum and  by the observations from the end of the preceeding section, such 
systems have zero entropy. 

However, one can proceed differently, without making use of the results of the 
previous sections. We denote as usual $\bfX =  (\prX;\vphi)$. 
Let $\calA$ be a finite sub-$\sigma$-algebra of  $\Sigma_\prX$ and let
$\bfY$ be the factor with generating $\sigma$-algebra $\Sigma_\prY := 
\bigvee_{j=1}^\infty \vphi^{-j}\calA$. By Corollary \ref{fac.c.fact}, $\bfY$ has quasi-discrete spectrum and
hence is invertible. It follows that $\calA \subseteq \Sigma_\prY$, hence
by \cite[Cor.{}4.14.1]{Walters1982}, $h(T,\calA)=0$. 
\end{proof}

\begin{thm}\label{fac.t.disj}
Let $\bfX$ and $\bfY$ be totally ergodic factors of a system $\bfZ$,
and suppose that $\bfX$ has quasi-discrete spectrum.
Then the following assertions are equivalent:
\begin{aufzii}
\item The factor system $\bfX \wedge  \bfY$ is trivial, i.e., $\bfX \wedge \bfY = \{\car\}$.
\item The factors $\bfX$ and $\bfY$ are independent, i.e.,
$\bfX \vee \bfY \cong \bfX \times \bfY$. 
\end{aufzii}
\end{thm}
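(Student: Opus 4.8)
The plan is to prove the equivalence of (i) and (ii) by using the structure theory for factors developed in Theorem~\ref{fac.t.biject}. The crucial observation is that the factor $\bfX$, having quasi-discrete spectrum, is governed entirely by its group of quasi-eigenvalues via the lattice isomorphism, and that its quasi-eigenvectors form an orthonormal basis of $\Ell{2}(\prX)$ by Corollary~\ref{fac.l.orth}. Throughout I identify $\Ell{2}(\prX)$ and $\Ell{2}(\prY)$ with $T$-invariant sublattices of $\Ell{2}(\prZ)$ containing the constants, and I write $G = G(\bfX)$ for the group of quasi-eigenvectors of $\bfX$ sitting inside $\Ell{2}(\prZ)$.

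The direction (ii)$\Rightarrow$(i) is the easy one: if $\bfX$ and $\bfY$ are independent, then $\Ell{2}(\prX)$ and $\Ell{2}(\prY)$ are orthogonal modulo the constants, so their intersection (as sublattices) can only contain the constant functions, giving $\bfX \wedge \bfY = \{\car\}$. For the substantive direction (i)$\Rightarrow$(ii), the plan is to show directly that the quasi-eigenvectors of $\bfX$ are orthogonal to $\Ell{2}(\prY)$ modulo constants, which yields independence. Concretely, let $\bfW := \bfX \wedge \bfY$, regarded as a common subfactor, and consider a quasi-eigenvector $g \in G \setminus G_1$ of $\bfX$. The aim is to apply Theorem~\ref{fac.t.HPmain} with the factor $\bfY$ and with $H$ taken to be the smallest subset of $G(\bfZ)$ containing $g$ and closed under the maps $h \mapsto \conj{h}T^n h$. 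One must verify the hypothesis of Theorem~\ref{fac.t.HPmain}: whenever $h\in H$ and $f \in \Ell{2}(\prY)$ with $hf \in \bigcup_n \fix(T^n)$, then $hf$ is constant. Here the assumption $\bfX\wedge\bfY = \{\car\}$ enters, because $h$ is a quasi-eigenvector of $\bfX$, so $hf$ being invariant (hence a quasi-eigenvector lying in $\Ell{2}(\prX)$) while also in $\Ell{2}(\prY)$ forces it into the trivial common factor. The conclusion of Theorem~\ref{fac.t.HPmain} then gives that each $h \in H$, in particular $g$ itself, is either in $\Ell{2}(\prY)$ or orthogonal to it; the first alternative would again place a nonconstant $\bfX$-quasi-eigenvector in the common factor, contradicting (i), so $g \perp \Ell{2}(\prY)$.

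From $g \perp \Ell{2}(\prY)$ for every nonconstant quasi-eigenvector $g \in G\setminus G_1$ of $\bfX$, independence follows. Since by Corollary~\ref{fac.l.orth} the quasi-eigenvectors of $\bfX$ (modulo constants) form an orthonormal basis of $\Ell{2}(\prX)$, and since $\bfX$ has quasi-discrete spectrum, this orthogonality extends by density to all of $\Ell{2}(\prX)$ modulo constants. Thus $\Ell{2}(\prX)$ and $\Ell{2}(\prY)$ are orthogonal relative to the constants, which is precisely the statement $\Cex{f_X f_Y}{\{\car\}} = \Cex{f_X}{\{\car\}}\Cex{f_Y}{\{\car\}}$ on a total family, i.e. the factors are independent and $\bfX \vee \bfY \cong \bfX\times\bfY$.

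The main obstacle I expect is the careful verification of the hypothesis of Theorem~\ref{fac.t.HPmain}: I must argue that any element of the generated set $H$, multiplied by an element $f\in\Ell{2}(\prY)$ to land in $\bigcup_n\fix(T^n)$, produces a constant. The subtle point is that $H$ consists of elements of $G(\bfZ)$ built from $g$ via the operations $h\mapsto \conj{h}T^n h$, and these lie in the quasi-eigenvector group of $\bfX$ because $\bfX$ is $T$-invariant and $g\in G(\bfX)$; one then needs that such a product $hf$, being both a quasi-eigenvector of $\bfX$ and an element of $\Ell{2}(\prY)$, lies in $\Ell{2}(\prX)\cap\Ell{2}(\prY)=\Ell{2}(\bfX\wedge\bfY)=\{\car\}$ modulo the total ergodicity argument of Remark~\ref{fac.r.HPmain}(c). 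Pinning down that the relevant products genuinely stay inside $\Ell{2}(\prX)$ (so that the hypothesis $\bfX\wedge\bfY=\{\car\}$ can be invoked) is where the care is required.
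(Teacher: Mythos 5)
Your proposal follows the paper's strategy in outline --- apply Theorem \ref{fac.t.HPmain} inside the ambient system $\bfZ$, with $\bfY$ as the factor and a set $H$ of quasi-eigenvectors of $\bfX$, and then convert ``each $h\in G(\bfX)$ lies in $\Ell{2}(\prY)$ or is orthogonal to it'' into independence using totality of $G(\bfX)$ in $\Ell{2}(\prX)$ --- and your implication (ii)$\Rightarrow$(i) as well as your final density step are fine. The gap sits exactly where you yourself locate the difficulty, and it is genuine: to verify the hypothesis of Theorem \ref{fac.t.HPmain} you assert that if $h\in H\subseteq G(\bfX)$, $f\in\Ell{2}(\prY)$ and $hf\in\bigcup_{n}\fix(T^n)$, then $hf$ lies in $\Ell{2}(\prX)$ and also in $\Ell{2}(\prY)$, so that assumption (i) forces it to be constant. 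This membership claim is unjustified and false: $hf$ is a product of an element of $\Ell{2}(\prX)$ with an element of $\Ell{2}(\prY)$, and such a product in general lies in \emph{neither} sublattice, so (i) says nothing about it; no amount of care repairs the argument in this form. For instance, let $\bfX=\bfY$ be the rotation by an irrational $\alpha$ and let $\bfZ$ be $\torus^2$ with the map $(x,y)\mapsto(x+\alpha,y+\alpha)$ and the invariant measure $\mu=\tfrac12(\haar\tensor\haar)+\tfrac12\Delta$, where $\Delta$ is Haar measure on the diagonal: both coordinate factors are totally ergodic with discrete spectrum, and $\bfX\wedge\bfY=\{\car\}$ because $\mu$ dominates $\tfrac12(\haar\tensor\haar)$; yet $h(x,y)=\ue^{2\upi\ui x}$ and $f(x,y)=\ue^{-2\upi\ui y}$ produce a nonconstant $hf\in\fix(T)$ belonging to neither factor.

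The idea you are missing --- the heart of the paper's proof --- is an algebraic step that manufactures an element which genuinely does lie in the intersection. From $T^n(hf)=hf$ one first takes absolute values: $T^n\abs{f}=\abs{f}$, so $\abs{f}$ is constant by total ergodicity of $\bfY$, and one may normalize $\abs{f}=\car$. Multiplicativity of $T^n$ then rewrites the fixed-point equation as
\[ \conj{h}\,T^nh \;=\; f\,\conj{T^nf},
\]
whose left-hand side lies in $\Ell{2}(\prX)$ and whose right-hand side lies in $\Ell{2}(\prY)$; it is this element, not $hf$, that lies in $\Ell{2}(\prX)\cap\Ell{2}(\prY)=\C\car$ by (i). Hence $T^nh=ch$ for a constant $c$, and the paper then invokes torsion-freeness of $\fact{G(\bfX)}{G_1(\bfX)}$ (Lemma \ref{defs.l.torsion-free}) to conclude that $h$ is constant, whence $f$ and $hf$ are constant as well. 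Note, finally, that your instinct that this step is delicate is sound even for the paper's own argument: for an eigenfunction $h\in G_2(\bfX)\setminus G_1(\bfX)$ the relation $T^nh=ch$ holds automatically with $c=(\Lambda h)^n$, so torsion-freeness alone does not settle that case; the example above shows it cannot be settled at all unless $\bfZ$ is assumed ergodic, in which case a common eigenvalue of $\bfX$ and $\bfY$ would yield a nonconstant element of $\Ell{2}(\prX)\cap\Ell{2}(\prY)$, contradicting (i).
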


Here, $\bfX \times \bfY$ denotes the usual direct product 
of the systems $\bfX$ and $\bfY$. 

\begin{proof}
The implication (ii)$\Rightarrow$(i) is clear. For the converse, suppose that 
$X \cap Y = \C\cdot \car$. We claim that $\bfX$ and $\bfY$ satisfy
the hypotheses of Theorem \ref{fac.t.HPmain} with $H=G$ being the 
group of quasi-eigenvalues of $\bfX$.   To this end, let 
$h\in G$, $0 \neq f\in Y$ and suppose that $T^n(hf) = hf$ for some $n \ge 1$. 
Taking the modulus yields $T^n\abs{f} = \abs{f}$, and since $\bfY$ is totally
ergodic, $\abs{f}$ is constant. After rescaling we may suppose that 
$\abs{f} = \car$.  Then 
\[ \conj{h}T^nh = f \conj{T^n f} \in X \cap Y = \C\cdot \car,
\]
i.e., $T^nh = c h$ for some $\abs{c} = 1$. Since $\fact{G}{G_1}$ is torsion-free
(Lemma \ref{defs.l.torsion-free})
it follows that  $h\in G_1$, i.e., $h$ is constant. But then $T^n f = f$ and 
hence also $f$ is a constant. This establishes the claim.

Now fix again $h\in G$. 
 Then Theorem \ref{fac.t.HPmain} can be applied and yields
either $h \perp Y$ or $h\in Y$, and in the latter case it follows
by (i) that $h$ is constant. In either case
\[ \int hf = \int h \cdot \int f \quad \text{for all $f\in Y$},
\]
and since $G$ generates $X$, (ii) is proved. 
\end{proof}

The following consequence is \cite[Cor.2.4]{HahnParry1968}.

\begin{cor}\label{fac.c.disj}
Let $\bfX$ and $\bfY$ be totally ergodic systems and suppose that 
$\bfX$ has quasi-discrete spectrum. Let $T_\bfX$ and $T_\bfY$ be
 the respective Koopman operators. Then  the following assertions
are equivalent:
\begin{aufzii}
\item The systems $\bfX$ and $\bfY$ are disjoint;
\item The systems $\bfX$ and $\bfY$ have no common factors
except  the trivial one;
\item The systems $\bfX$ and $\bfY$ have no common factors with discrete
spectrum  except  the trivial one;
\item $\Pspec(T_\bfX) \cap \Pspec(T_\bfY) = \{1\}$
\end{aufzii}
\end{cor}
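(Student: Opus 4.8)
The plan is to prove the cycle of implications (i) $\Rightarrow$ (ii) $\Rightarrow$ (iii) $\Rightarrow$ (iv) $\Rightarrow$ (i), with the last implication carrying essentially all the weight and the first three being formal. For (i) $\Rightarrow$ (ii) I would argue by contraposition: a common nontrivial factor $\bfW$ of $\bfX$ and $\bfY$ gives rise to the relatively independent joining over $\bfW$, which differs from the product joining precisely because $\bfW$ is nontrivial (a nonconstant function on $\bfW$ is perfectly correlated across the two coordinates), so $\bfX$ and $\bfY$ cannot be disjoint. The implication (ii) $\Rightarrow$ (iii) is immediate, since discrete-spectrum factors form a subclass of all factors. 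For (iii) $\Rightarrow$ (iv) I would again use contraposition: if $\lambda \in \Pspec(T_\bfX) \cap \Pspec(T_\bfY)$ with $\lambda \neq 1$, then the unimodular eigenfunctions with eigenvalue $\lambda$ generate in each system a factor with discrete spectrum whose point spectrum is the cyclic group $\langle \lambda \rangle$; by the Halmos--von Neumann theorem these two factors are isomorphic, yielding a common nontrivial discrete-spectrum factor.

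The heart of the matter is (iv) $\Rightarrow$ (i), where I would invoke Theorem \ref{fac.t.disj}. Given an arbitrary joining of $\bfX$ and $\bfY$, I would realize it as a common extension $\bfZ$ of which both $\bfX$ and $\bfY$ are totally ergodic factors, with $\bfX$ of quasi-discrete spectrum. By Theorem \ref{fac.t.disj} it then suffices to show that the common factor $\bfX \wedge \bfY$ computed inside $\bfZ$ is trivial, for then $\bfX$ and $\bfY$ are independent in $\bfZ$, i.e. the joining equals the product; as this holds for every joining, $\bfX$ and $\bfY$ are disjoint. Now $\bfX \wedge \bfY$ is a factor of $\bfX$, hence totally ergodic and of quasi-discrete spectrum by Corollary \ref{fac.c.fact}. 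If it were nontrivial, its group $G$ of quasi-eigenvectors would properly contain the constants $G_1$. But $G_2 = G_1$ would force $G_n = G_1$ for all $n$ (an $f \in G_{n+1}$ has $\Lambda f \in G_n$, so if $G_n = G_1$ then $\Lambda f$ is constant and $f \in G_2 = G_1$), whence $G = G_1$; thus $G_2 \supsetneq G_1$, producing a nonconstant unimodular eigenfunction with eigenvalue $\lambda \neq 1$. Via the factor embeddings this $\lambda$ is a common eigenvalue of $\bfX$ and $\bfY$, contradicting (iv). Hence $\bfX \wedge \bfY$ is trivial, as required.

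The hard part will be the bookkeeping in (iv) $\Rightarrow$ (i): translating disjointness (every joining is the product) into the statement that the common factor vanishes in every common extension, and verifying that the hypotheses of Theorem \ref{fac.t.disj} — total ergodicity of both factors together with quasi-discrete spectrum of $\bfX$ — are genuinely met for an \emph{arbitrary} joining. Once the reduction to the triviality of $\bfX \wedge \bfY$ is in place, the quasi-discrete spectrum of $\bfX$, through Corollary \ref{fac.c.fact} and the elementary fact that a nontrivial totally ergodic system with quasi-discrete spectrum must carry a nonconstant eigenfunction, closes the argument cleanly.
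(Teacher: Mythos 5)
Your proof is correct and follows essentially the same route as the paper: in both, the heavy lifting is done by Theorem \ref{fac.t.disj} together with Corollary \ref{fac.c.fact} and the observation that a nontrivial totally ergodic QDS-system must carry a nonconstant eigenfunction. The only differences are organizational: the paper closes the cycle as (iv)$\Rightarrow$(ii)$\Rightarrow$(i) (triviality of an arbitrary common factor first, then Theorem \ref{fac.t.disj}) rather than your merged (iv)$\Rightarrow$(i) inside an arbitrary joining, and it proves (iii)$\Rightarrow$(iv) via a rotation system on the dual group of $\Pspec(T_\bfX)\cap\Pspec(T_\bfY)$ --- the same idea as your Halmos--von Neumann argument, just not in contrapositive form.
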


\begin{proof}
The implications (i)$\Rightarrow$(ii)$\Rightarrow$(iii) are straightforward
and the implication (ii)$\Rightarrow$(i) follows from Theorem \ref{fac.t.disj}. 

To see that (iii) implies (iv), let $\Gamma := 
\Pspec(T_\bfX) \cap \Pspec(T_\bfY)$, a subgroup of $\torus$.
Let $G := \Gamma^*$ be the dual group, which is compact. Then for some
$a\in G$ the rotation system $(G;a)$ is a factor of both $\bfX$ and $\bfY$.
Since this factor has discrete spectrum, by (iii) it follows
that $G= \{\car\}$, i.e., $\Gamma = \{1\}$. 

Finally, suppose that (iv) holds and that $\bfX$ and $\bfY$ have the common factor
$\bfU$. Then, by Corollary \ref{fac.c.fact}, 
$\bfU$ has quasi-discrete spectrum. 
The group of  eigenvalues of $\bfU$ is a subgroup of 
$\Pspec(T_\bfX) \cap \Pspec(T_\bfY)$, which  by (iv) is trivial. 
Hence $\bfU$ is trivial, so we have (ii).
\end{proof}

The following result appeared first in \cite[Lemma 2]{ALR2015}.

\begin{cor}\label{mqf.c.ortho}
Let  $\bfX$ be an ergodic system, $m \in \N$  and $f\in G(\bfX)$ 
such that $\Lambda_\bfX^{m} f \in \torus$ is not a root of unity. Then $f \perp \car$. 
\end{cor}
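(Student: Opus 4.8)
The plan is to prove the statement by induction on $m\in\N$, using the correlation sequence of $f$ together with the mean ergodic theorem. First I would record the reformulation: since $\Lambda^m f$ is a constant in $\torus$, one has $\Lambda^{m+1}f=\car$, so $f\in G_{m+1}(\bfX)$, and the desired conclusion $f\perp\car$ is simply $\int f=0$. The engine of the argument is that $\bfX$ is ergodic, so the mean ergodic theorem gives $\frac1N\sum_{j=0}^{N-1}T^jf\to(\int f)\,\car$ in $\Ell{2}(\prX)$; pairing this $\Ell{2}$-limit against $f$ shows that $\abs{\int f}^2=\lim_N\frac1N\sum_{j<N}c(j)$, where $c(j):=\int\conj f\,T^jf$. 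Thus it suffices to understand these scalar correlations and show that their Cesàro averages vanish. Note that only ergodicity (not total ergodicity) enters here, which matches the hypothesis.

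For the base case $m=1$ the assumption reads $Tf=(\Lambda f)f=\lambda f$ with $\lambda:=\Lambda f\in\torus$ not a root of unity, so $\int f=\int Tf=\lambda\int f$, and $\lambda\neq1$ forces $\int f=0$. For the inductive step $m\ge2$ the key computation is to identify, for each $j\ge1$, the quasi-eigenvector $\phi_j:=\conj f\,T^jf$ and its order. Setting $\lambda:=\Lambda^m f$ and $w:=\Lambda^{m-1}f$, and using that $\Lambda$ is a homomorphism commuting with $T$ and with complex conjugation, I would compute $\Lambda^{m-1}\phi_j=\conj w\cdot T^jw$. Since $\Lambda w=\Lambda^m f=\lambda\car$ means precisely $Tw=\lambda w$, the function $w$ is a genuine eigenfunction, so $T^jw=\lambda^jw$ and hence $\Lambda^{m-1}\phi_j=\conj w\cdot\lambda^jw=\lambda^j\car$. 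For $j\ge1$ the constant $\lambda^j$ is again not a root of unity, so $\phi_j$ satisfies the hypotheses of the corollary with $m$ replaced by $m-1$; the induction hypothesis then yields $c(j)=\int\phi_j=0$ for all $j\ge1$. Combined with $c(0)=1$, the mean ergodic limit above gives $\abs{\int f}^2=\lim_N\frac1N=0$, completing the induction.

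The main obstacle is exactly the evaluation of the mixed integrals $c(j)=\int\conj f\,T^jf$, which are not of the form handled by the orthogonality relation of Corollary \ref{fac.l.orth}; moreover that corollary is unavailable here because $\bfX$ is only assumed ergodic. The inductive reduction is what circumvents this: it replaces a correlation of a quasi-eigenvector of order $m$ by the integral of the quasi-eigenvector $\phi_j$ of order $m-1$, whose top quasi-eigenvalue $\lambda^j$ remains aperiodic, so that no appeal to total ergodicity is ever needed. The remaining points are routine bookkeeping that I would check in passing, namely that $\phi_j\in G(\bfX)$ with $\abs{\phi_j}=\car$ (immediate, as $G(\bfX)$ is a group and $\abs{T^jf}=\car$) and that $\Lambda$ genuinely commutes with $T$ and with conjugation on $\Ell{0}(\prX;\torus)$.
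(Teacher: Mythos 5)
Your proof is correct, and it takes a genuinely different route from the paper's. The paper deduces the corollary from the main technical result, Theorem \ref{fac.t.HPmain}: it forms the set $H := \{ h\in G(\bfX) \st \exists\, m,n \in \N,\ \Lambda^{m} h = c^n\car\} \cup G_1(\bfX)$ (with $c$ the given non-root of unity), checks the invariance condition \eqref{fac.eq.HPmain} and the fixed-point hypothesis for the trivial factor $Y=\C\car$ (via your same observation that $\Lambda^{m-1}h$ is an eigenfunction with aperiodic eigenvalue, so $T^kh=h$ forces $h$ constant), and then reads off that non-constant elements of $H$ are orthogonal to $\car$. You instead argue by induction on $m$, entirely bypassing Theorem \ref{fac.t.HPmain}: the base case is the one-line eigenvalue computation, and the inductive step identifies $\phi_j:=\conj{f}\,T^jf$ as a quasi-eigenvector with $\Lambda^{m-1}\phi_j=\lambda^j\car$ still aperiodic (using that $w=\Lambda^{m-1}f$ is an eigenfunction with $Tw=\lambda w$), so the induction hypothesis kills every correlation $c(j)=\int\conj{f}\,T^jf$ for $j\ge 1$, and von Neumann's mean ergodic theorem converts the vanishing Ces\`aro averages of $c(j)$ into $\abs{\int f}^2=0$. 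Both arguments use only ergodicity, as required. What the paper's route buys is brevity and thematic unity, since Theorem \ref{fac.t.HPmain} is already established and the whole section is organized around it; what your route buys is self-containedness (only the mean ergodic theorem is invoked) and a slightly stronger conclusion along the way, namely that $f$ is orthogonal to its entire forward orbit, $\int \conj{f}\,T^jf=0$ for all $j\ge1$, not merely to the constants. In effect you have re-derived, in this special case, the orthogonality mechanism that powers the ``second case'' of the proof of Theorem \ref{fac.t.HPmain}.
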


\begin{proof}
Note that $f$ cannot be a constant function.
Let $c\in \torus$ be not a root of unity, and let 
\[ H := \{ h\in G(\bfX) \st \exists m,\:n \in \N 
\,\,\text{such that}\,\, \Lambda^{m} h = c^n\car\} \cup G_1(\bfX).
\]
It is easy to see that $H$ is
$\Lambda$-invariant. Moreover,
if $h \in H$, then $h \cdot Th \cdots T^kh \in H$ for each $k\ge 1$.
We want to apply Theorem \ref{fac.t.HPmain} 
(cf. Remark \ref{fac.r.HPmain}.2) to $H$ and the
trivial factor $\bfY$. Take  $h\in H$ such that $T^k h = h$ for some 
$k \ge 1$. If there are $m,\:n \in \N$ such that $\Lambda^mh = c^n \car$,
then  $g:= \Lambda^{m{-}1}h$ is  
an eigenfunction of $T$ with eigenvalue $c^n$. It follows that
\[ c^{nk} g = T^k g = T^k \Lambda^{m{-}1}h = \Lambda^{m{-}1} T^k h =
\Lambda^{m{-}1}h = g,
\]
which implies that $nk=0$, a contradiction.  So $h \in G_1(\bfX)$, i.e.
$h$ is constant. It follows that 
Theorem \ref{fac.t.HPmain} can be applied, yielding that
all non-constant functions in 
$H$ are perpendicular to $Y = \C\car$.
\end{proof}

\section{Markov Quasi-Factors}
\label{s.mqf}

From now on we only consider separable and invertible
measure-preserving systems. The Koopman operators are usually denoted by $T$,
regardless of the system. 
Also, a  totally ergodic system with quasi-discrete spectrum
is called a {\emdf QDS-system} in the following.

\medskip

A system $\bfY$ is called 
a \textbf{Markov quasi-factor} of a system $\bfX$ 
if there is a Markov operator
$M: \Ell{1}(\prX) \to \Ell{1}(\prY)$ such that
\begin{aufzi}
\item $M$ is intertwining, i.e. $M T = T M$, and 
\item the range of $M$ is dense.
\end{aufzi}
As always with Markov operators, these properties hold
if and only if they hold for the restriction of  $M$ to the $\Ell{2}$-spaces.
Moreover, there is a dual point of view by taking adjoints: 
$\bfY$ is a Markov quasi-factor of $\bfX$ if there is an 
{\em injective} intertwining Markov operator 
$S: \Ell{1}(\prY) \to \Ell{1}(\prX)$. 

Such an operator $S$ must map eigenfunctions of $\bfY$ to eigenfunctions
of $\bfX$, resulting in $\sigma_p(T_\bfY) \subseteq \sigma_p(T_\bfX)$.
Moreover, again since $S$ is injective, the dimension of 
corresponding eigenspaces grows in passing from $\bfY$ to $\bfX$.
Hence, if $\bfX$ is (totally) ergodic, so is $\bfY$.


Of course, if $\bfY$ is a factor of $\bfX$, then it is also a Markov
quasi-factor of $\bfX$. In general, the converse is wrong, see
\cite[Proposition 4.4]{Fraczek2010}. On the other hand, it is well known 
that a
Markov quasi-factor of an ergodic system with discrete spectrum system is a
factor. (The proof is easy: suppose that $\bfY$ is a Markov
quasi-factor of $\bfX$ where $M: \Ell{1}(\prX) \to \Ell{1}(\prY)$ is the
corresponding intertwining Markov operator. 
Let $(\ue_i)_i$ be the orthogonal basis of
eigenfunctions of $\Ell{2}(\prX)$. Then the linear span of $(M \ue_i)_i$ is
dense in $\Ell{2}(\prY)$. Furthermore, since $M$ is intertwining, $M
\ue_i \neq 0$ implies that $M \ue_i$ is an eigenfunction for all
$i$. This shows that the system $\bfY$ has discrete spectrum. Since
$\sigma_p(T_\bfY) \subseteq \sigma_p(T_\bfX)$ as well, 
$\bfY$ is a factor of $\bfX$.) It therefore has been an open question
already for some time now
whether the same is true for QDS-systems.
In this section we give an affirmative answer 
in a class of QDS-systems
with certain algebraic restrictions on the signature (Theorem
\ref{mqf.t.main}). This class includes, for example, 
the skew-shift system from Example \ref{ex.mps-qds.ss}.

\medskip

In what follows we shall employ the notion of the {\emdf derived factor} of
a QDS-system. Suppose that $\bfX$ is a
QDS-system with signature $(H, \Lambda, \eta)$. 
Then $H':=\Lambda(H) \leq H$ is a $\Lambda$-invariant
subgroup, hence Theorem \ref{fac.t.biject} yields a unique factor
$\partial \bfX$ of $\bfX$ with the signature $(H', \Lambda|_{H'},
\eta|_{H'\cap H_1})$. It is clear that if 
$\ord (H, \Lambda, \eta)$ is finite, then 
\begin{equation*}
\ord (H', \Lambda|_{H'}, \eta|_{H' \cap H_1}) = \ord (H,
\Lambda, \eta) - 1.
\end{equation*}

It has been  proved by Piekniewska 
that a Markov quasi-factor of a QDS-system is
again a QDS-system \cite[Theorem 3.1.4]{Piekniewska2013}. 
We shall show that the argument there can be refined
in order to  obtain a bound on the order of the
signature (Theorem \ref{mqf.t.ordbd} below).  The proof, which 
is merely a closer inspection of the original one,  rests on
the following two powerful  results from the literature.

\begin{thm}\cite[Proposition 5.1]{Fraczek2010}.
\label{mqf.thm.frlem}
If $\bfY$ is a Markov quasi-factor of an ergodic system
$\bfX$, then $\bfY$ is a factor
of some infinite ergodic self-joining of $\bfX$.
\end{thm}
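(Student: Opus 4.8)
The plan is to recast the Markov quasi-factor as a joining of $\bfY$ with $\bfX$, to build from it a relatively independent infinite self-joining of $\bfX$, and to recover $\bfY$ inside that self-joining by a law-of-large-numbers argument, dealing with ergodicity only at the very end by passing to an ergodic component. By the definition of a Markov quasi-factor there is an intertwining Markov operator $M\colon\Ell{1}(\prX)\to\Ell{1}(\prY)$ of dense range; restricting to $\Ell{2}$ and using the standard correspondence between intertwining Markov operators and joinings, $M$ is encoded by a joining $\nu$ of $\bfY$ and $\bfX$. Since the spaces are standard (we are in the separable setting of this section), disintegrating $\nu$ over $\prY$ produces a measurable Markov kernel $(\nu_y)_{y\in\prY}$ of probability measures on $\prX$ with
\[
 (Mg)(y) = \int_\prX g\,\ud\nu_y \qquad (g\in\Ell{\infty}(\prX)).
\]
The marginals of $\nu$ are $\mu_\prY$ and $\mu_\prX$ because $M$ preserves integrals, and $\nu$ is $\psi\times\vphi$-invariant because $MT=TM$; invariance together with essential uniqueness of the disintegration gives the equivariance $\nu_{\psi y} = \vphi_*\nu_y$ for $\mu_\prY$-a.e.\ $y$.

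Next I would form the relatively independent infinite self-joining
\[
 \lambda := \int_\prY \nu_y^{\otimes\N}\,\ud\mu_\prY(y)
\]
on the product space $\prX^{\times\N}$, equipped with $\vphi^{\times\N}$. Each coordinate marginal of $\lambda$ is $\int_\prY \nu_y\,\ud\mu_\prY = \mu_\prX$, and the equivariance above yields $\vphi^{\times\N}_*\lambda = \lambda$, so $\lambda$ is an infinite self-joining of $\bfX$. To locate $\bfY$ inside it, note that under the conditional measure $\nu_y^{\otimes\N}$ the coordinate projections $p_i$ are i.i.d.\ $\prX$-valued with law $\nu_y$; hence for $g\in\Ell{\infty}(\prX)$ the strong law of large numbers gives
\[
 \frac1N\sum_{i=1}^{N} g\after p_i \ \longrightarrow\ \int_\prX g\,\ud\nu_y = (Mg)(y)
\]
for $\nu_y^{\otimes\N}$-a.e.\ point, and therefore $\lambda$-a.e. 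Running $g$ through a countable family separating the points of $\prY$ (available since $\ran M$ is dense and $\prY$ is standard), these limits assemble into a measurable map $\pi\colon(\prX^{\times\N},\lambda)\to\prY$ with $\lim_N \frac1N\sum_{i=1}^N g\after p_i = (Mg)\after\pi$. A short computation using that $M$ preserves integrals gives $\pi_*\lambda = \mu_\prY$, and the identity $\frac1N\sum_i (Tg)\after p_i = \bigl(\frac1N\sum_i g\after p_i\bigr)\after\vphi^{\times\N}$ together with $MT=TM$ gives $\pi\after\vphi^{\times\N} = \psi\after\pi$. Thus $\pi$ is a factor map exhibiting $\bfY$ as a factor of $(\prX^{\times\N},\lambda;\vphi^{\times\N})$.

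The remaining, and only serious, point is ergodicity: $\lambda$ itself need not be ergodic, as is already visible when $\bfY$ is trivial, where $\lambda = \mu_\prX^{\otimes\N}$ is ergodic only if $\bfX$ is weakly mixing. Here I would take the ergodic decomposition $\lambda = \int \lambda_\omega\,\ud P(\omega)$. Because each coordinate projection pushes $\lambda$ to the \emph{ergodic} measure $\mu_\prX$, extremality of $\mu_\prX$ in the simplex of $\vphi$-invariant measures forces a.e.\ $\lambda_\omega$ to have every coordinate marginal equal to $\mu_\prX$; so a.e.\ $\lambda_\omega$ is again a self-joining of $\bfX$, now ergodic. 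By the identical argument applied to $\pi$, the ergodicity of $\mu_\prY$ forces $\pi_*\lambda_\omega = \mu_\prY$ for a.e.\ $\omega$, so the factor map onto $\bfY$ survives intact on a.e.\ component. Choosing any such $\omega$ displays $\bfY$ as a factor of the ergodic infinite self-joining $(\prX^{\times\N},\lambda_\omega)$, as required. I expect the genuinely delicate bookkeeping to be the equivariance of the disintegration and the verification that the law-of-large-numbers limits glue into an honest factor map; the ergodicity, by contrast, reduces cleanly to the extreme-point property of the ergodic measures $\mu_\prX$ and $\mu_\prY$.
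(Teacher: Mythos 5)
Your argument is correct, but note that the paper itself offers no proof of this statement: it is imported verbatim from Fr\k{a}czek--Lema\'nczyk \cite[Proposition 5.1]{Fraczek2010}, so the only thing to compare against is that original proof. What you have written is essentially a reconstruction of it: the joining $\nu$ attached to the intertwining Markov operator, the relatively independent infinite product $\int_{\prY}\nu_y^{\otimes\N}\,\ud\mu_{\prY}(y)$, the strong law of large numbers to make the $\prY$-coordinate measurable over the $\prX^{\N}$-coordinates, and an ergodic component at the end. All the points you flag as delicate do go through in the separable, invertible setting of Section 6, where standard models are available: this gives the disintegration and its equivariance $\nu_{\psi y}=\vphi_*\nu_y$, and the a.e.\ injectivity of $y\mapsto\bigl((Mg_n)(y)\bigr)_n$ for a countable family $(g_n)$ with $M(L^\infty)$ dense, which is what turns the law-of-large-numbers limits into an honest factor map $\pi$. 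Your ergodicity step is also the standard and correct one: since each coordinate marginal of $\lambda$ is the ergodic measure $\mu_\prX$ and $\pi_*\lambda$ is the ergodic measure $\mu_\prY$, extremality forces the same to hold for almost every ergodic component $\lambda_\omega$, so $\bfY$ survives as a factor of an ergodic infinite self-joining.
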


\begin{thm} \cite[Th\'{e}or\`{e}me 4]{Lesigne1993}. 
\label{mqf.thm.les}
Let $\bfX$ be a totally ergodic system with
group of quasi-eigenfunctions $G(\bfX)$. 
Then for 
every $k \geq 0$ and every $f\in \Ell{2}(\prX)$ 
the following assertions
 are equivalent:
\begin{aufzii}
\item $f \in G_{k+1}(\bfX)^{\perp}$;
\item For a.e. $x \in X$ and
for each $P \in \R_k[t]$ and each 
continuous periodic function $\chi$ on $\R$ one has
\begin{equation*}
\lim\limits_{N \to \infty} \frac 1 N \sum\limits_{n=0}^{N-1}
\chi(P(n)) (T^n f)(x) = 0.
\end{equation*}
\end{aufzii}
\end{thm}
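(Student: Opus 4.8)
The plan is to prove the substantial implication (i)$\Rightarrow$(ii) by induction on $k$, and to deduce the converse by contraposition following the same inductive scheme; I use throughout that the Koopman operator $T$ of a point transformation is multiplicative and $\Ell{2}$-isometric, and I write $\ue^{2\upi\ui t}$ for the basic character. For fixed $k$ and $f$ it is enough to treat, for each fixed weight, the average with a single exponential: expanding the periodic $\chi$ in a Fourier series, approximating it uniformly by trigonometric polynomials and bounding the error through Birkhoff's theorem applied to $\abs{f}$ (so that $\frac1N\sum_{n<N}\abs{(T^nf)(x)}\to\int\abs{f}$ a.e.), one reduces (ii) to showing
\begin{equation*}
 A_N(x):=\frac1N\sum_{n=0}^{N-1}\ue^{2\upi\ui Q(n)}\,(T^nf)(x)\longrightarrow 0\quad\text{a.e.}\qquad(Q\in\R_k[t]).
\end{equation*}
Because $G_k(\bfX)\subseteq G_{k+1}(\bfX)$ forces $G_{k+1}(\bfX)^{\perp}\subseteq G_k(\bfX)^{\perp}$, the cases $\deg Q\le k-1$ are already covered by the inductive hypothesis, so I may assume $\deg Q=k$; the base case $k=0$ is Birkhoff's theorem together with ergodicity.

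For bounded $f$ I apply van der Corput's inequality to $u_n:=\ue^{2\upi\ui Q(n)}(T^nf)(x)$. Multiplicativity of $T$ gives
\begin{equation*}
 \overline{u_n}\,u_{n+h}=\ue^{2\upi\ui Q_h(n)}\,T^nf_h(x),\qquad f_h:=\conj f\cdot T^hf,\quad Q_h(n):=Q(n+h)-Q(n)\in\R_{k-1}[t],
\end{equation*}
so that van der Corput bounds $\limsup_N\abs{A_N(x)}^2$ by $\limsup_H\frac1H\sum_{h=1}^H\gamma_h(x)$, where $\gamma_h(x):=\limsup_N\abs{\frac1N\sum_{n<N}\ue^{2\upi\ui Q_h(n)}T^nf_h(x)}$. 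Since $\deg Q_h\le k-1$, the inductive hypothesis applied to $f_h$ shows $\gamma_h(x)=0$ a.e.\ whenever $f_h\perp G_k(\bfX)$. It therefore remains to control, on average over $h$, the contribution of the orthogonal projection of $f_h$ onto $\cspan^{\Ell{2}}(G_k(\bfX))$.

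This is the heart of the matter. Writing that projection as $\sum_w\langle f_h,w\rangle\,w$ over the orthonormal system of unimodular quasi-eigenfunctions of order $\le k-1$ (orthonormal by Corollary \ref{fac.l.orth}), each term contributes $\langle f_h,w\rangle\,w(x)$ times a Weyl average of $\ue^{2\upi\ui(Q_h+r^w_x)(n)}$, where $T^nw(x)=w(x)\ue^{2\upi\ui r^w_x(n)}$ by the binomial formula \eqref{defs.e.T-binom}. By Weyl's equidistribution theorem these averages vanish except for a sparse set of $h$, because the top-order quasi-eigenvalues are irrational (total ergodicity); and for each fixed $w$ the coefficients $\langle f_h,w\rangle=\langle T^hf,fw\rangle$ are controlled by Wiener's lemma: since $f\perp G_2(\bfX)$ its spectral measure has no atoms, whence $\frac1H\sum_{h\le H}\abs{\langle T^hf,fw\rangle}^2\to0$. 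Combining the density-zero selection from Weyl with the Cesàro decay from Wiener yields $\frac1H\sum_{h\le H}\gamma_h(x)\to0$ a.e., closing the induction. The principal obstacle is precisely the interchange of the $h$-average with the (possibly infinite) sum over $w$: one must establish uniform integrability over this basis, i.e.\ that the mass of $\frac1H\sum_h\|\cdot\|^2$ lying outside a finite set of quasi-eigenfunctions is small uniformly in $H$, and this tail estimate is where the real work lies.

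For (ii)$\Rightarrow$(i) I argue by contraposition and the same induction. Subtracting the $G_{k+1}(\bfX)$-orthogonal part (whose averages already vanish by the implication just proved), I may assume $0\neq f\in\cspan^{\Ell{2}}(G_{k+1}(\bfX))$; if $f\notin\cspan^{\Ell{2}}(G_k(\bfX))$ there is a unimodular $g\in G_{k+1}(\bfX)\setminus G_k(\bfX)$ with $\langle f,g\rangle\neq0$ (otherwise apply the inductive hypothesis at level $k-1$). Birkhoff applied to $f\conj g$ gives $\frac1N\sum_{n<N}(T^nf)(x)\overline{(T^ng)(x)}\to\langle f,g\rangle\neq0$ a.e.; writing $T^ng(x)=g(x)\ue^{2\upi\ui p_x(n)}$ with $p_x$ of degree $k$ whose leading term $\theta\binom nk$ is $x$-independent and irrational (it encodes the quasi-eigenvalue $\Lambda^kg=\ue^{2\upi\ui\theta}$), the choice $P(n):=-\theta\binom nk\in\R_k[t]$ cancels this top-order phase, and the residual $x$-dependent phase is a reindexing of the orbit of a quasi-eigenfunction of order $<k$, so the persistence of a non-vanishing average is handed to the inductive hypothesis. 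Finally, the statement asks for a \emph{single} null set valid for all $P$ and $\chi$ simultaneously; this uniformity is obtained from the a.e.\ convergence for each fixed datum by means of a uniform maximal inequality for the averages $A_N$ together with the Banach principle, which is the analytic counterpart of the combinatorial obstacle noted above.
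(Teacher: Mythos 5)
You should first be aware that the paper does not reprove this result at all: its ``proof'' consists of citing Lesigne's Th\'eor\`eme 4 for the case $k\ge 1$ (together with the dictionary $G_{k+1}(\bfX)=E_k(T)$ from Remark \ref{defs.r.labelling}) and invoking Birkhoff's theorem for $k=0$. Your proposal instead attempts to reprove the cited theorem from scratch by a van der Corput induction, which is indeed the right family of ideas, but as written it is a strategy outline with the two hardest analytic steps left open. First, in the inductive step you must control the projection of $f_h=\conj{f}\cdot T^h f$ onto $\cspan^{\Ell{2}}(G_k(\bfX))$, averaged in $h$, and you yourself flag that the interchange of the $h$-average with the (generally infinite) sum over the orthonormal system of quasi-eigenfunctions ``is where the real work lies''; naming an obstacle is not the same as overcoming it, and without that tail estimate the induction does not close. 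Second, the uniformity of the exceptional null set over \emph{all} $P\in\R_k[t]$ and all periodic $\chi$ cannot be obtained the way you suggest: the Banach principle upgrades a.e.\ convergence from a dense class of \emph{functions} $f$ to all of $\Ell{2}$ given a maximal inequality, but it says nothing about producing a single null set for an uncountable family of \emph{weights} $(P,\chi)$ with $f$ fixed. This uniformity is precisely the Wiener--Wintner-type difficulty that makes Lesigne's theorem nontrivial (one needs, e.g., van der Corput bounds that are uniform in the polynomial's coefficients, or a countable reduction plus an equicontinuity argument), and your sketch does not supply it. Relatedly, the standing reduction to bounded $f$ is never removed; for a fixed weight this follows from the Birkhoff maximal inequality and density of bounded functions in $G_{k+1}(\bfX)^\perp$, but that extension again happens weight-by-weight and so feeds back into the same uniformity problem.

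On the other hand, your treatment of (ii)$\Rightarrow$(i) is more complicated than necessary, and the complication hides its own gaps (the ``reindexing of the orbit of a quasi-eigenfunction of order $<k$'' step is not justified). The direct argument is short: given $g\in G_{k+1}(\bfX)$, the binomial formula \eqref{defs.e.T-binom} gives $(T^ng)(x)=g(x)\,\ue^{2\upi\ui p_x(n)}$ with $p_x\in\R_k[t]$, so for a.e.\ $x$ one may apply (ii) with $P=-p_x$ and $\chi=\ue^{2\upi\ui(\cdot)}$, multiply by $\conj{g(x)}$, and compare with Birkhoff's theorem applied to $f\conj{g}$ to conclude $\int f\conj{g}=0$; this needs no induction and only ergodicity, which is exactly the observation of Pi\k{e}kniewska recorded after the theorem. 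In sum: your outline identifies the correct proof architecture, but the two central estimates of the hard implication are missing, so the proposal does not constitute a proof of the statement, whereas the paper deliberately treats it as an external black box.
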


Here, $\R_{k}[t]$ 
denotes the space of all real polynomials of one variable 
of degree less or equal to $k$. 

\begin{proof}
The case $k\ge 1$ is treated in Lesigne's paper
\cite{Lesigne1993}. (Recall from Remark 
\ref{defs.r.labelling} that  
$G_{k{+}1}(\bfX) = E_k(\bfX)$ for $k\ge 1$
in Lesigne's terminology.) The case $k=0$
holds by Birkhoff's ergodic theorem. 
\end{proof}

It was observed in \cite{Piekniewska2013} 
that the total ergodicity of $\bfX$ is not
required for the proof of the implication 
$\mathrm{(ii)} \Rightarrow \mathrm{(i)}$.

\medskip

We can now state
and prove the announced refinement of Piekniewska's result.
Its proof is completely along the lines of her original argument.

\begin{thm}
\label{mqf.t.ordbd}
Let $\bfY$ be a Markov quasi-factor of a QDS-system $\bfX$.  
Then $\bfY$ is again a QDS-system and
\begin{equation*}
\ord(H({\bfY}),\Lambda_{\bfY}, \eta_{\bfY}) \leq 
\ord(H({\bfX}), \Lambda_{\bfX}, \eta_{\bfX}).
\end{equation*}
\end{thm}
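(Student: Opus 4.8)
The plan is to reduce the Markov quasi-factor to a genuine factor via Frączek's theorem, and then to transport quasi-discreteness \emph{of a fixed order} through the resulting self-joining by means of Lesigne's characterisation of $G_{k+1}^\perp$. Set $d := \ord(H(\bfX),\Lambda_\bfX,\eta_\bfX)$; if $d = \infty$ there is nothing to prove beyond quasi-discreteness, so assume $d < \infty$. Order $d$ means $\Lambda_\bfX^{d} = \car$ on $H(\bfX) = \Lambda_\bfX(G(\bfX))$, i.e. $\Lambda_\bfX^{d+1} = \car$ on $G(\bfX)$, so $G(\bfX) = G_{d+1}(\bfX)$; since $\bfX$ has quasi-discrete spectrum, $G_{d+1}(\bfX)$ is total in $\Ell{2}(\prX)$. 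By Theorem \ref{mqf.thm.frlem}, $\bfY$ is a factor of some infinite ergodic self-joining $\bfW$ of $\bfX$, whose $\Ell{2}$-space I denote $\Ell{2}(\bfW)$. Write $\pi_i \colon \bfW \to \bfX$ for the coordinate factor maps and $\pi \colon \bfW \to \bfY$ for the factor map onto $\bfY$, so that $\Ell{2}(\prY)$ sits inside $\Ell{2}(\bfW)$ as a $T$-invariant subspace.

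The first key step is to show that $G_{d+1}(\bfW)$ is total in $\Ell{2}(\bfW)$. I would use that $\Lambda_\bfW$ is multiplicative, $\Lambda_\bfW(uv) = \Lambda_\bfW(u)\,\Lambda_\bfW(v)$ for unimodular $u,v$, so that $G_{d+1}(\bfW) = \ker(\Lambda_\bfW^{d+1})$ is a group; and that each $\pi_i$ intertwines the dynamics, whence $\Lambda_\bfW(g\circ \pi_i) = (\Lambda_\bfX g)\circ \pi_i$ and therefore $g\circ\pi_i \in G_{d+1}(\bfW)$ for every $g\in G_{d+1}(\bfX)$. Consequently every finite product $\prod_{i}(g_i\circ\pi_i)$ with $g_i\in G_{d+1}(\bfX)$ lies in $G_{d+1}(\bfW)$. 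Since the coordinate $\sigma$-algebras generate that of $\bfW$ and $G_{d+1}(\bfX)$ is total in each coordinate, one promotes per-coordinate totality to totality of these products: testing a putative $f\perp G_{d+1}(\bfW)$ against such products one variable at a time, the functional $h\mapsto \langle f,\ (g_{i_1}\circ\pi_{i_1})\cdots(h\circ\pi_{i_m})\rangle$ is bounded on $\Ell{2}(\prX)$ because the remaining factors are unimodular, and it vanishes on the total set $G_{d+1}(\bfX)$, hence vanishes identically; iterating gives $f\perp \prod_i(h_i\circ\pi_i)$ for all $h_i\in\Ell{2}(\prX)$, so $f=0$. Thus $G_{d+1}(\bfW)$ is total.

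The second key step transports orthogonality upward through $\pi$. Suppose $f\in\Ell{2}(\prY)$ with $f\perp G_{d+1}(\bfY)$. As $\bfY$ is totally ergodic (a Markov quasi-factor of the totally ergodic $\bfX$), Lesigne's theorem \ref{mqf.thm.les}, direction (i)$\Rightarrow$(ii) with $k=d$, yields a full-measure set of $y$ on which
\[ \tfrac 1N \sum_{n=0}^{N-1} \chi(P(n))\,(T^n f)(y) \to 0 \qquad (P\in\R_d[t],\ \chi\text{ continuous periodic}). \]
Viewing $f$ inside $\Ell{2}(\bfW)$ and using that $T^n f$ is $\pi$-measurable, so $(T^n f)(w) = (T^n f)(\pi(w))$, the same averages vanish on the $\pi$-preimage of that set, which has full measure in $\bfW$ because $\pi$ is measure-preserving. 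Hence $f$ satisfies condition (ii) of Theorem \ref{mqf.thm.les} for the system $\bfW$ at level $d$, and by the (ii)$\Rightarrow$(i) direction — which, by the remark after Theorem \ref{mqf.thm.les}, needs no total ergodicity — we get $f\in G_{d+1}(\bfW)^{\perp}$. By the first step $f=0$. Therefore $G_{d+1}(\bfY)$ is total in $\Ell{2}(\prY)$, so $\bfY$ is a QDS-system, and Corollary \ref{fac.c.orth} (applied to the totally ergodic $\bfY$ with $n=d+1$) forces $G(\bfY) = G_{d+1}(\bfY)$, i.e. $\ord(H(\bfY),\Lambda_\bfY,\eta_\bfY)\le d$.

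I expect the main obstacle to be precisely the fact that the self-joining $\bfW$ need not be totally ergodic: one may neither call $\bfW$ a QDS-system nor invoke the full equivalence of Lesigne's theorem for it, and the whole argument hinges on using only the (ii)$\Rightarrow$(i) implication for $\bfW$ (valid without total ergodicity) while reserving (i)$\Rightarrow$(ii) for the genuinely totally ergodic factor $\bfY$. A secondary technical point needing care is the totality of $G_{d+1}(\bfW)$, where the multiplicativity of $\Lambda_\bfW$ together with the coordinate-generation of the joining must be combined to lift per-coordinate totality to $\Ell{2}(\bfW)$.
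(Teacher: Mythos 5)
Your proof is correct and takes essentially the same route as the paper's: Theorem \ref{mqf.thm.frlem} to realize $\bfY$ as a factor of an infinite ergodic self-joining, totality in the joining of finite products of coordinate quasi-eigenfunctions, and the asymmetric use of Theorem \ref{mqf.thm.les} --- the implication (i)$\Rightarrow$(ii) applied to the totally ergodic $\bfY$, and (ii)$\Rightarrow$(i), valid under mere ergodicity, applied to the joining. The one loose end is the case $\ord(H(\bfX),\Lambda_\bfX,\eta_\bfX)=\infty$, where quasi-discreteness of $\bfY$ still requires proof; running your two steps at every finite level $k$ (products of elements of $G(\bfX)$ show that $G(\bfW)$ is total, and the Lesigne transport gives $f\perp G_{k+1}(\bfW)$ for every $k$) closes it exactly as in the paper.
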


\begin{proof}
As $\bfY$ is a  Markov quasi-factor of $\bfX$ and $\bfX$ is totally ergodic,
$\bfY$ is totally ergodic and
a factor of an ergodic (countably) infinite self-joining $\bfZ$, say, 
of $\bfX$ (Theorem \ref{mqf.thm.frlem}). 
In this situation 
we may consider the different $T$-intertwining embeddings $J_n : \Ell{2}(\bfX)
\to \Ell{2}(\bfZ)$ which generate the joining $\bfZ$. 
For $f_1, \dots, f_m \in \Ell{\infty}(\prX)$ we abbreviate
\[ f_1 \tensor \dots \tensor f_m := (J_1f_1) \cdot (J_2f_2)
 \cdots (J_mf_m)
\in \Ell{\infty}(\prZ).
\]
It is then clear that if $k \in \N_0$ and
each $f_j \in G_{k}(\bfX)$, then 
$f_1 \tensor \dots \tensor f_m \in G_{k}(\bfZ)$.

Further,  $\bfX$ is a QDS-system, $G(\bfX)$ is a total 
subset of $\Ell{2}(\bfX)$. As $\bfZ$ is an infinite self-joining of
$\bfX$, the elements of the form $f_1 \tensor \dots \tensor f_m$ with
each $f_m \in G(\bfX)$ form a total 
subset of  $\Ell{2}(\bfZ)$. 
In particular, $G(\bfZ)$ is a total subset of  $\Ell{2}(\bfZ)$.

Let now $k \in \N_0$ and
suppose that $f \in \Ell{2}(\prY)$ is such that $f \perp G_{k{+}1}(\bfY)$.
Then, by Theorem \ref{mqf.thm.les}, for
a.e. $y \in Y$, for each $P \in \R_k[t]$ and each continuous 
periodic function $\chi$ on $\R$ we have that
\begin{equation}
\label{eq.lesigne}
\lim\limits_{N \to \infty} \frac 1 N \sum\limits_{n=0}^{N-1} \chi(P(n)) T^nf(y) = 0.
\end{equation}
Identifying $f$ with 
an element in $\Ell{2}(\prZ)$ we see that
we may start the assertion with ``for almost every $y\in Z$'' here. 
Since the second implication of Theorem \ref{mqf.thm.les} 
does only require ergodicity, 
we conclude that $f \perp G_{k{+}1}(\bfZ)$.

Consequently, if  $f \perp G(\bfY)$, 
then $f\perp G(\bfZ)$, which implies that $f =0$. This shows that
$\bfY$ has quasi-discrete spectrum. Now suppose in addition 
that $k = \ord(H({\bfX}), \Lambda_{\bfX}, \eta_{\bfX})$ is finite. 
Then $G_k(\bfX)$ is total in $\Ell{2}(\prX)$ and 
hence $G_k(\bfZ)$ is total $\Ell{2}(\prZ)$. 
As above, it follows that $G_k(\bfY)$ is 
total in $\Ell{2}(\prY)$.

In particular, if $f\in G_{k{+}1}(\bfY) \setminus G_k(\bfY)$, then 
$f\perp G_k(\bfY)$, which implies that $f= 0$. This is impossible,
so $G_{k{+}1}(\bfY) \setminus G_k(\bfY) = \emptyset$. And this was the claim.
\end{proof}

For the main theorem below it will be important 
to know that the derived factors are `respected' 
under Markov quasi-factor maps of QDS-systems of order $2$. 
This is our next step.

\begin{thm} 
\label{mqf.t.derstab}
Let $\bfX$ be a QDS-system with
signature $(H(\bfX),\Lambda_{\bfX},\eta_{\bfX})$ of order $2$. 
Let $\bfY$ be a Markov quasi-factor of $\bfX$. 
Then $\partial \bfY$ is a factor of $\partial \bfX$. In other words:
\begin{equation}
\label{eq.stab1}
 \Lambda_\bfY(H_2(\bfY)) \subseteq \Lambda_\bfX(H_2(\bfX))
\end{equation}
when viewed as subgroups of $\torus$. 
\end{thm}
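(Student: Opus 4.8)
The plan is to realise $\bfY$ inside a self-joining of $\bfX$, transport the assertion to that joining, and then exploit the ``diagonal'' behaviour of the derived homomorphism on tensor products.

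First I would invoke Theorem \ref{mqf.t.ordbd}: since $\bfX$ has order $2$, the Markov quasi-factor $\bfY$ is again a QDS-system, now of order at most $2$, so in particular $H_2(\bfY) = H(\bfY)$. Moreover, by Theorem \ref{mqf.thm.frlem}, $\bfY$ is a factor of an ergodic infinite self-joining $\bfZ$ of $\bfX$, generated by the intertwining embeddings $J_n : \Ell{2}(\bfX) \to \Ell{2}(\bfZ)$, and I keep the tensor notation $f_1 \tensor \dots \tensor f_m = (J_1 f_1)\cdots(J_m f_m)$ from the proof of that theorem. Because $T_\bfY$ is just the restriction of $T_\bfZ$ to the invariant sublattice $\Ell{2}(\bfY)$, one has $\Lambda_\bfY = \Lambda_\bfZ$ on $G(\bfY) = \Ell{2}(\bfY) \cap G(\bfZ)$; hence $H_2(\bfY) \subseteq H_2(\bfZ)$ and it suffices to prove the sharper inclusion
\[ \Lambda_\bfZ(H_2(\bfZ)) \subseteq \Lambda_\bfX(H_2(\bfX)) \qquad \text{in } \torus. \]

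The decisive structural input is that $\bfZ$ is not merely ergodic but totally ergodic: the eigenvalues of an ergodic joining lie in the subgroup of $\torus$ generated by the eigenvalues of the joined copies, which here is $\Pspec(T_\bfX) = H_1(\bfX)$ and is torsion-free by total ergodicity of $\bfX$. Granting this, $\bfZ$ is a QDS-system, its group $G(\bfZ)$ being total by the proof of Theorem \ref{mqf.t.ordbd}. Since $\bfX$ has order $2$ we have $G(\bfX) = G_3(\bfX)$, so every generating tensor $f_1 \tensor \dots \tensor f_m$ with $f_j \in G(\bfX)$ lies in $G_3(\bfZ)$, and these tensors are total in $\Ell{2}(\bfZ)$. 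Corollary \ref{fac.c.orth} then yields $G(\bfZ) = G_3(\bfZ)$ and, applied with $M$ the subgroup generated by the tensors together with the constants, identifies $G_3(\bfZ)$ with exactly that subgroup.

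The final computation is then routine. Since each $J_n$ is a multiplicative intertwiner and $T_\bfZ$ is multiplicative, $\Lambda_\bfZ$ acts diagonally,
\[ \Lambda_\bfZ(f_1 \tensor \dots \tensor f_m) = (\Lambda_\bfX f_1) \tensor \dots \tensor (\Lambda_\bfX f_m), \]
and consequently $\Lambda_\bfZ^2(f_1 \tensor \dots \tensor f_m) = (\Lambda_\bfX^2 f_1)\tensor\dots\tensor(\Lambda_\bfX^2 f_m)$. For $f_j \in G_3(\bfX)$ each factor $\Lambda_\bfX^2 f_j$ is a constant $c_j\car$ with $c_j = \Lambda_\bfX(\Lambda_\bfX f_j) \in \Lambda_\bfX(H_2(\bfX))$, and a tensor product of constants is again the constant $\prod_j c_j$; thus $\Lambda_\bfZ^2(f_1\tensor\dots\tensor f_m) = \prod_j c_j \in \Lambda_\bfX(H_2(\bfX))$. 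As $\Lambda_\bfZ^2$ is a homomorphism, $\Lambda_\bfX(H_2(\bfX))$ is a subgroup of $\torus$, and every element of $G_3(\bfZ)$ is a product of such tensors times a constant (on which $\Lambda_\bfZ^2$ vanishes), it follows that $\Lambda_\bfZ^2(G_3(\bfZ)) \subseteq \Lambda_\bfX(H_2(\bfX))$. Since $H_2(\bfZ) = \Lambda_\bfZ(G_3(\bfZ))$, this is precisely the required inclusion, and combining it with the first reduction finishes the proof.

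The hard part is the total ergodicity of $\bfZ$: everything else is bookkeeping with the diagonal action and the orthogonality results of Section \ref{s.fac}, but the passage from ``ergodic joining'' to ``totally ergodic'' (equivalently, to torsion-free point spectrum of $\bfZ$) is where the real content sits, and I would want either a clean citation from joining theory or an in-house argument bounding $\Pspec(T_\bfZ)$ inside $\gen{H_1(\bfX)} = H_1(\bfX)$.
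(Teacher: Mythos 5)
Your reduction to the joining $\bfZ$ and the diagonal computation of $\Lambda_\bfZ$ on tensors are correct bookkeeping, but the step you yourself flag as ``the hard part'' is a genuine gap, and it cannot be filled the way you hope: the principle you invoke --- that the point spectrum of an ergodic joining lies in the subgroup of $\torus$ generated by the point spectra of the joined copies --- is false in general. Concretely, let $S_0$ be a Bernoulli shift on $(V_0,\nu_0)$, put $V=V_0\times V_0$, $S=S_0\times S_0$, let $R$ be the coordinate flip $(x,y)\mapsto (y,x)$, and let $\bfZ$ be the system $(v,i)\mapsto (Sv,\,i+1)$ on $V\times \Z_2$ with the product measure. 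Then $\bfZ$ is ergodic and has eigenvalue $-1$ (eigenfunction $(-1)^i$), yet the maps $(v,i)\mapsto v$ and $(v,i)\mapsto R^iv$ exhibit $\bfZ$ as a joining of $(V;S)$ and $(V;RS)$: both maps intertwine and preserve measure, and they jointly generate because $\conj{h(v)}\,h(R^iv)=(-1)^i$ for any unimodular $h$ with $h\circ R=-h$. Both $(V;S)$ and $(V;RS)$ are weakly mixing (note $(RS)^2=S^2$), in fact Bernoulli of equal entropy and hence isomorphic, so this is even an ergodic \emph{self-}joining of a weakly mixing system which fails to be totally ergodic. Consequently Theorem \ref{mqf.thm.frlem} gives you only ergodicity of $\bfZ$, and your subsequent applications of Corollary \ref{fac.c.orth} to $\bfZ$ and your identification of $G_1(\bfZ)$ with the constants --- both of which require total ergodicity of $\bfZ$ --- are unjustified; the argument collapses at exactly the point you marked.

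The paper's proof is engineered precisely to avoid this issue: it makes no structural claim about $G(\bfZ)$ and never uses total ergodicity of $\bfZ$. It argues by contradiction. If some $f\in G_3(\bfY)$ had $\Lambda_\bfZ^2 f=c\car$ with $c\in\Lambda^2_\bfY(G_3(\bfY))\setminus\Lambda^2_\bfX(G_3(\bfX))$, then for arbitrary $f_1,\dots,f_m\in G_3(\bfX)$ the function $\conj{f}\cdot(f_1\tensor\cdots\tensor f_m)$ lies in $G(\bfZ)$ and $\Lambda_\bfZ^2$ sends it to $\conj{c}\,\Lambda^2_\bfX(f_1)\cdots\Lambda^2_\bfX(f_m)$, an element of the torsion-free group $H_1(\bfX)$ which cannot equal $1$ (otherwise $c$ would lie in the subgroup $\Lambda^2_\bfX(G_3(\bfX))$) and hence is not a root of unity. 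Corollary \ref{mqf.c.ortho} --- which is stated for \emph{merely ergodic} systems exactly so that it can be applied to the joining $\bfZ$, and which is the designated substitute for the total ergodicity you are missing --- then yields $f\perp f_1\tensor\cdots\tensor f_m$; totality of the tensors in $\Ell{2}(\prZ)$ forces $f=0$, contradicting $\abs{f}=\car$. So to repair your proof, replace the appeal to total ergodicity of $\bfZ$ by this contradiction argument; as it stands, the missing step is not a citation issue but the actual mathematical content of the theorem, and the spectral bound on $\Pspec(T_\bfZ)$ you hoped for is false in the generality in which you state it.
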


Note that since $\bfX$ has order $2$, $\bfY$ is a QDS-system of order
at most $2$ (Theorem \ref{mqf.t.ordbd}). 
Hence $\partial \bfY$ is a QDS-system of order at most $1$,
i.e., a system with discrete spectrum. From the spectral considerations
it follows that 
$H_1(\bfY) \subseteq H_1(\bfX)$ as subsets of $\T$.

\begin{proof}
Let, as in the proof of Theorem \ref{mqf.t.ordbd}, 
be $\bfZ$ an infinite ergodic self-joining
of $\bfX$ having $\bfY$ as a factor. 
By definition of the groups $H_2(\bfX)$ and $H_2(\bfY)$, 
\eqref{eq.stab1} is the same as
\begin{equation}
\label{eq.stab}
\Lambda_{\bfY}^2(G_3(\bfY)) \subseteq \Lambda_{\bfX}^2(G_3(\bfX)).
\end{equation}
In order to prove this, let $f \in G_3(\bfY)$ be such that
\begin{equation*}
\Lambda_{\bfZ}^2 f = c \in \Lambda_{\bfY}^2(G_3(\bfY)) \setminus \Lambda_{\bfX}^2(G_3(\bfX)).
\end{equation*}
Clearly, $c \in H_1(\bfY) \subseteq H_1(\bfX)$ is irrational. Let 
$f_1,\dots,f_m \in G_3(\bfX)$ be arbitrary. Then
\begin{equation*}
\Lambda_{\bfZ}^2(\conj{f} \cdot (f_1 \otimes \dots \otimes f_m)) = \conj{c}
 \cdot \Lambda_{\bfX}^2(f_1) \cdot \dots \cdot \Lambda_{\bfX}^2(f_m) \in 
H_1(\bfX) \cap \conj{c}\Lambda_{\bfX}^2(G_3(\bfX)).
\end{equation*}
As an element of $H_1(\bfX)$,  
it is either irrational or is equal to $1$. But, in fact, 
it cannot be equal to $1$ because of the assumption 
$c \in \Lambda_{\bfY}^2(G_3(\bfY)) \setminus \Lambda_{\bfX}^2(G_3(\bfX))$. 
We conclude by Corollary 
\ref{mqf.c.ortho} that $f$ is orthogonal to all such tensors. 
But then $f = 0$ (by the density of the span of the set of all tensors), a 
contradiction.
\end{proof}

Next, we recall some basic algebraic results. First of all, we state
the following lemma. The proof can be found in \cite[Lemma 7.2]{Lang2002}.

\begin{lemma}
\label{mqf.l.dirsumfr}
Let $f: A \to A'$ be a surjective homomorphism of Abelian groups, and
assume that $A'$ is free. Let $B$ be the kernel of $f$. Then there
exists a subgroup $C$ of $A$ such that the restriction of $f$ to $C$
induces an isomorphism of $C$ with $A'$, and such that $A = B
\oplus C$.
\end{lemma}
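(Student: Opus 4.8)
The plan is to recognize this as the splitting lemma for a short exact sequence of Abelian groups whose rightmost term is free, and to construct an explicit splitting. The sequence in question is
\[
0 \pfeil B \pfeil A \stackrel{f}{\pfeil} A' \pfeil 0,
\]
and the entire content of the lemma is that the hypothesis ``$A'$ free'' forces this sequence to split, with $C$ playing the role of the image of the splitting map.

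First I would use freeness of $A'$ to fix a basis $(e_i)_{i\in I}$ of $A'$. Since $f$ is surjective, for each $i\in I$ I can choose some $a_i \in A$ with $f(a_i) = e_i$. The universal property of the free Abelian group $A'$ then produces a unique homomorphism $s: A' \to A$ with $s(e_i) = a_i$ for all $i$; by construction $f\after s = \id_{A'}$ on the basis and hence on all of $A'$. This identity is the crux: it is exactly where freeness (equivalently, projectivity of $A'$) is used, and everything else is formal.

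I would then set $C := s(A')$. That $f$ restricts to an isomorphism $C \to A'$ follows from $f\after s = \id_{A'}$: the map $s$ is injective (having a left inverse), so $s: A' \to C$ is bijective, and $f\restrict{C}$ is its inverse. To obtain $A = B \oplus C$ I would use the standard decomposition $a = (a - s(f(a))) + s(f(a))$ for $a\in A$: the second summand lies in $C$, while the first is killed by $f$ and hence lies in $B = \ker(f)$, giving $A = B + C$. Finally, if $x \in B \cap C$, write $x = s(y)$; then $0 = f(x) = f(s(y)) = y$, so $x = s(0) = 0$, whence $B\cap C = \{0\}$ and the sum is direct.

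I do not expect any genuine obstacle here, since this is a textbook fact cited from \cite{Lang2002}; the only point requiring care is the invocation of the universal property to define the section $s$, which is precisely what the freeness hypothesis is for.
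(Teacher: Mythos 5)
Your proof is correct and is essentially the argument the paper relies on: the paper itself gives no proof but cites \cite[Lemma 7.2]{Lang2002}, whose proof is exactly this standard splitting construction (lift a basis of $A'$, obtain the section $s$ by the universal property, and take $C = s(A')$, which is the subgroup generated by the lifts). Your verification of $f\after s = \id_{A'}$, of $A = B + C$ via $a = (a - s(f(a))) + s(f(a))$, and of $B \cap C = \{0\}$ is complete and accurate.
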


Using Lemma \ref{mqf.l.dirsumfr} and the fact that a subgroup of a
free Abelian group is a free Abelian group as well (see \cite[Theorem
7.3]{Lang2002}) one can easily prove the following lemma.
\begin{lemma}
\label{mqf.l.dirsum}
Let $H$ be an Abelian group and let $\pi: H \to H'$ be a
homomorphism such that  the following assumptions hold:
\begin{aufzi}
\item $H'$ is a free Abelian group; 

\item $\ker \pi \leq H$ is a free Abelian group.
\end{aufzi}
Then there is a subgroup $K \leq H$ isomorphic via $\pi|_K$ to the subgroup
$\ran \pi \leq H'$ such that $H = K \oplus \ker \pi$. Any such $K$ is
a free Abelian group, and the group $H$ is free Abelian as well.
\end{lemma}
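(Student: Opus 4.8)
The plan is to reduce everything to Lemma \ref{mqf.l.dirsumfr} by first replacing $H'$ with the image of $\pi$. First I would observe that $\ran \pi$ is a subgroup of the free Abelian group $H'$, and hence is itself free Abelian by \cite[Theorem 7.3]{Lang2002}. Thus the corestricted map $\pi : H \to \ran \pi$ is a \emph{surjective} homomorphism onto a free Abelian group, with kernel $\ker \pi$ (which is free by assumption (ii), though freeness of the kernel is not even needed for the splitting itself).

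Next I would apply Lemma \ref{mqf.l.dirsumfr} with $A := H$, $A' := \ran \pi$, $f := \pi$, and $B := \ker \pi$. This yields a subgroup $K \leq H$ such that $\pi|_K$ induces an isomorphism $K \cong \ran \pi$ and $H = \ker \pi \oplus K$, which is precisely the asserted splitting.

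It then remains to verify the freeness claims. Since $K \cong \ran \pi$ via $\pi|_K$ and $\ran \pi$ is free Abelian, $K$ is free Abelian. The same reasoning applies verbatim to \emph{any} subgroup satisfying the stated conclusion: if $H = K \oplus \ker \pi$, then $K \cap \ker \pi = \{0\}$ forces $\pi|_K$ to be injective, while $\pi(K) = \pi(H) = \ran \pi$, so $\pi|_K : K \to \ran \pi$ is automatically an isomorphism and $K$ is free. Finally, $H = K \oplus \ker \pi$ is a direct sum of two free Abelian groups, and the union of bases of the two summands is a basis of $H$, so $H$ is free Abelian as well.

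There is essentially no obstacle here beyond the bookkeeping of the corestriction trick: the content is entirely carried by Lemma \ref{mqf.l.dirsumfr} together with the permanence of freeness under passage to subgroups and to finite direct sums.
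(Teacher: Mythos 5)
Your proof is correct and follows exactly the route the paper intends: it sketches this lemma as an easy consequence of Lemma~\ref{mqf.l.dirsumfr} combined with the fact that subgroups of free Abelian groups are free (\cite[Theorem 7.3]{Lang2002}), which is precisely your corestriction-to-$\ran\pi$ argument. Your additional observations---that freeness of $\ker\pi$ is only needed for the freeness of $H$, and that \emph{any} complement of $\ker\pi$ is automatically isomorphic to $\ran\pi$ via $\pi$---are correct fillings-in of details the paper leaves to the reader.
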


Finally, we arrive at the main theorem of this section. 

\begin{thm}
\label{mqf.t.main}
Let $\bfX$ 
be a QDS-system with signature $(H(\bfX),\Lambda_{\bfX},\eta_{\bfX})$ 
of order at most $2$ such that the group 
$H_1(\bfX)$ of eigenvalues is a free Abelian group. 
Then each Markov quasi-factor of $\bfX$ is a factor of $\bfX$.
\end{thm}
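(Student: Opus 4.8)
The plan is to reduce the statement to an algebraic embedding of signatures and then to invoke the factor--subgroup correspondence together with the isomorphism theorem for QDS-systems. First I would record, via Theorem \ref{mqf.t.ordbd}, that a Markov quasi-factor $\bfY$ of $\bfX$ is itself a QDS-system of order at most $2$, with signature $(H(\bfY),\Lambda_\bfY,\eta_\bfY)$. I would then assemble the two relations between the signatures, both read inside $\torus$: the spectral containment $H_1(\bfY)\subseteq H_1(\bfX)$ (eigenvalues of a Markov quasi-factor are eigenvalues of the system, as noted after Theorem \ref{mqf.t.derstab}), and, when $\bfX$ has order exactly $2$, the containment $\Lambda_\bfY(H(\bfY))\subseteq \Lambda_\bfX(H(\bfX))$ delivered by Theorem \ref{mqf.t.derstab}. (If $\bfX$ has order at most $1$ it has discrete spectrum, and the conclusion is the classical fact recalled in the introduction, so I may assume order exactly $2$.) The target is to manufacture from these data a morphism of signatures $\alpha\colon (H(\bfY),\Lambda_\bfY,\eta_\bfY)\to (H(\bfX),\Lambda_\bfX,\eta_\bfX)$ whose image is $\Lambda_\bfX$-invariant.

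The algebraic heart of the argument exploits the freeness hypothesis through Lemma \ref{mqf.l.dirsum}. Since $\bfX$ has order $2$, one has $\Lambda_\bfX^2=0$ on $H(\bfX)$, so $\Lambda_\bfX$ is a homomorphism $H(\bfX)\to H_1(\bfX)$ with kernel $H_1(\bfX)$; both $H_1(\bfX)$ and its subgroup $\Lambda_\bfX(H(\bfX))$ are free Abelian (subgroups of free Abelian groups are free). Lemma \ref{mqf.l.dirsum} then yields a splitting
\[
 H(\bfX)=C\oplus H_1(\bfX),\qquad \Lambda_\bfX|_C\colon C\stackrel{\cong}{\longrightarrow}\Lambda_\bfX(H(\bfX)).
\]
The same lemma, applied to $\Lambda_\bfY\colon H(\bfY)\to H_1(\bfY)$ (noting that $H_1(\bfY)\subseteq H_1(\bfX)$ is free), gives $H(\bfY)=C'\oplus H_1(\bfY)$ with $\Lambda_\bfY|_{C'}$ injective. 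I would then define $\alpha$ to be the inclusion $H_1(\bfY)\hookrightarrow H_1(\bfX)$ on the first summand and $(\Lambda_\bfX|_C)^{-1}\circ\Lambda_\bfY$ on $C'$; this last map is well defined precisely because $\Lambda_\bfY(C')=\Lambda_\bfY(H(\bfY))$ lands inside $\Lambda_\bfX(H(\bfX))=\ran(\Lambda_\bfX|_C)$.

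It then remains to verify that $\alpha$ is a morphism of signatures and to harvest the conclusion. A direct computation on each summand gives $\Lambda_\bfX\circ\alpha=\alpha\circ\Lambda_\bfY$ (the kernels $H_1$ are annihilated by $\Lambda$ on both sides, while on $C'$ one has $\Lambda_\bfX(\alpha c)=\Lambda_\bfY c$ by construction), and $\eta_\bfX\circ\alpha=\eta_\bfY$ holds on $H_1(\bfY)$ because both $\eta$'s are the inclusions into $\torus$ and $\alpha$ is the inclusion there. Being a morphism of signatures, $\alpha$ is automatically injective, and its image $K:=\alpha(H(\bfY))$ satisfies $\Lambda_\bfX(K)=\alpha(\Lambda_\bfY(H(\bfY)))\subseteq K$, so $K$ is $\Lambda_\bfX$-invariant; one also checks $K\cap H_1(\bfX)=H_1(\bfY)$, whence $\alpha$ is an isomorphism of $(H(\bfY),\Lambda_\bfY,\eta_\bfY)$ onto $(K,\Lambda_\bfX|_K,\eta_\bfX|_{K\cap H_1(\bfX)})$. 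By Theorem \ref{fac.t.biject} the subgroup $K$ corresponds to a genuine factor $\bfY'$ of $\bfX$ carrying exactly this signature, which is therefore isomorphic to that of $\bfY$; by the isomorphism theorem for totally ergodic QDS-systems, $\bfY'\cong\bfY$, and composing the factor embedding of $\bfY'$ with this isomorphism exhibits $\bfY$ as a factor of $\bfX$.

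I expect the main obstacle to be arranging the splitting of $H(\bfX)$ so that the complement $C$ is mapped \emph{isomorphically} by $\Lambda_\bfX$ onto $\Lambda_\bfX(H(\bfX))$: without such a $\Lambda$-adapted complement one cannot define $\alpha$ on $C'$ as a homomorphism that simultaneously respects $\Lambda$ and the inclusion on $H_1$. This is exactly where the freeness of $H_1(\bfX)$ is indispensable, through Lemma \ref{mqf.l.dirsum}; once the splitting is in place, the remaining verifications are purely formal.
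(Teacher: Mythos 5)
Your proposal is correct and is essentially the paper's own argument: both reduce the theorem to constructing a morphism of signatures $\sig(\bfY)\to\sig(\bfX)$ (the commuting square over the inclusion $H_1(\bfY)\subseteq H_1(\bfX)$) via Theorems \ref{mqf.t.ordbd} and \ref{mqf.t.derstab} and Lemma \ref{mqf.l.dirsum}, and then harvest the conclusion from Theorem \ref{fac.t.biject} together with the isomorphism theorem for QDS-systems. The only divergence is cosmetic: the paper splits $H(\bfY)=H_1(\bfY)\oplus K$ and defines the lift on a free basis of $K$ by choosing arbitrary $\Lambda_\bfX$-preimages of the images $\Lambda_\bfY(\varepsilon_j)$, whereas you additionally split $H(\bfX)=C\oplus H_1(\bfX)$ and use the resulting homomorphic section $(\Lambda_\bfX|_C)^{-1}$ to pick those preimages canonically --- the same mechanism, packaged basis-free (and with the order $\le 1$ case and the final appeal to the isomorphism theorem spelled out more explicitly than in the paper).
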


\begin{proof}
The system $\bfX$ is a QDS-system with 
signature $(H(\bfX),\Lambda_{\bfX},\eta_{\bfX})$  
of order at most $2$. Since $\bfY$ is a
Markov quasi-factor of $\bfX$,  $\bfY$ is also a QDS-system
with the signature $(H(\bfY),\Lambda_{\bfY},\eta_{\bfY})$ of
order at most $2$.

Our goal is to define the group homomorphisms 
$\alpha_1,\alpha_2$ such that 
$\alpha_2|_{H_1(\bfY)} = \alpha_1$, 
$\eta_{\bfX} \circ \alpha_1 = \eta_{\bfY}$ 
and such that  the diagram
\begin{equation*}
\xymatrix{ \mathbf 1  & H_1(\bfX)  \ar[l] & H_2(\bfX) \ar[l]^{\Lambda_{\bfX}} \\
               \mathbf 1 \ar[u]^{\id}  & H_1(\bfY) \ar[u]_{\alpha_1}  \ar[l] &  H_2(\bfY) \ar[u]_{\alpha_2} \ar[l]^{\Lambda_{\bfY}}}
\end{equation*}
is commutative. 
Then, by Theorem \ref{fac.t.biject}, the statement of the theorem
follows. 

As $\bfY$ is a Markov quasi-factor of $\bfX$ one has
a natural inclusion $H_1(\bfY) \subseteq H_1(\bfX)$, and
we choose $\alpha_1$ to be this inclusion map. Then 
clearly $\eta_{\bfX} \circ \alpha_1 = \eta_{\bfY}$ as $\eta_\bfX$ and
$\eta_\bfY$ just map constant functions to their values. 

In order to define the homomorphism $\alpha_2$,  observe that $\ker
\Lambda_{\bfY} = H_1(\bfY) \subseteq H_2(\bfY)$. 
Fix a decomposition $H_2(\bfY) =
H_1(\bfY) \oplus K$ for some free Abelian subgroup 
$K \leq H_2(\bfY)$, 
given by Lemma \ref{mqf.l.dirsum}. We let $\alpha_2|_{H_1(\bfY)}:=\alpha_1$.

Suppose that $\{ \varepsilon_j \}_{j \in I}$ is a basis for $K$. 
Since $ \Lambda_\bfY(H_2(\bfY)) \subseteq \Lambda_\bfX(H_2(\bfX))$
by Theorem \ref{mqf.t.derstab},  for
every basis element $\varepsilon_j$, 
there is $\delta_j \in H_2(\bfX)$ such that
\begin{equation*}
\alpha_1 \Lambda_{\bfY}(\varepsilon_j) = \Lambda_{\bfX} (\delta_j).
\end{equation*}
Defining $\alpha_2$ by $\alpha_2(\varepsilon_j):=\delta_j$ 
for every $j \in I$ completes the proof.
\end{proof}

\bigskip

{\bf Acknowledgements.}\
The authors would like to thank M.~Lema{\'n}czyk (Toru{\'n}) for his
long-term sympathetic support and  his kind hospitality. In particular,
the second author is grateful for the two generous invitations
to Toru\'{n} in November 2014 and March 2016, 
which led to the results of Section 6.  The second author
  also kindly acknowledges the financial support from
Delft Institute of Applied Mathematics.

\bibliographystyle{acm} 


\end{document}